\newtheorem{assumption}{Assumption}
\newcommand{\var}{\operatorname{Var}}
\newcommand{\spann}{\operatorname{Span}}
\newcommand{\prob}{\operatorname{\mathbb{P}}}
\newcommand{\expec}{\operatorname{\mathbb{E}}}
\newcommand{\argmin}{\operatornamewithlimits{\arg\min}}
\newcommand{\norm}[1]{\left\lVert{\textstyle #1}\right\rVert}
\newcommand{\abs}[1]{\left\lvert{\textstyle #1}\right\rvert}
\newcommand{\reals}{\mathbb{R}}
\newcommand{\subG}{\mathcal{G}}
\newcommand{\cG}{\mathcal{G}}
\newcommand{\Hbar}{\bar{H}}
\renewcommand{\ge}{\geqslant}
\renewcommand{\geq}{\ge}
\renewcommand{\le}{\leqslant}
\renewcommand{\leq}{\le}
\newcommand{\diff}{\mathrm{d}}
\newcommand{\1}{\mathds{1}}
\newcommand{\supp}{\operatorname{supp}}
\newcommand{\taum}{\tau}
\newcommand{\aols}{\hat{\alpha}_{n}^{\mathrm{ols}}}
\newcommand{\bols}{\hat{\beta}_{n}^{\mathrm{ols}}}
\newcommand{\alasso}{\hat{\alpha}_{n}^{\mathrm{lasso}}}
\newcommand{\alslasso}{\hat{\alpha}_{n,N}^{\mathrm{lslasso}}}
\newcommand{\blasso}{\hat{\beta}^{\mathrm{lasso}}}
\newcommand{\blslasso}{\hat{\beta}^{\mathrm{lslasso}}}
\newcommand{\blasson}{\hat{\beta}_{n}^{\mathrm{lasso}}}
\newcommand{\blassonk}{\hat{\beta}_{n,k}^{\mathrm{lasso}}}
\definecolor{auburn}{rgb}{0.43, 0.21, 0.1}
\definecolor{britishracinggreen}{rgb}{0.0, 0.26, 0.15}
\definecolor{burntumber}{rgb}{0.54, 0.2, 0.14}
\definecolor{carmine}{rgb}{0.59, 0.0, 0.09}
\definecolor{aurometalsaurus}{rgb}{0.43, 0.5, 0.5}
\definecolor{gray}{rgb}{0.4, 0.4, 0.4 }
\definecolor{cerulean}{rgb}{0.0, 0.48, 0.65}
\definecolor{ceruleanblue}{rgb}{0.16, 0.32, 0.75}
\definecolor{cobalt}{rgb}{0.0, 0.28, 0.67}
\definecolor{darkcerulean}{rgb}{0.03, 0.27, 0.49}
\newcommand{\mdf}[1]{\bgroup\color{Sepia}{#1}\egroup}
\begin{document}

\title{Control variate selection for Monte Carlo integration}

\author{Rémi Leluc \and François Portier \and Johan Segers}

\institute{R. Leluc \at
	Télécom Paris \\
	Institut Polytechnique de Paris \\
	\email{remi.leluc@gmail.com} \\
	\and
	F. Portier \at
	Télécom Paris \\
	Institut Polytechnique de Paris \\
	\email{francois.portier@gmail.com} \\
	\and
	J. Segers \at
	UCLouvain, LIDAM/ISBA \\ 
	Voie du Roman Pays 20 \\ 
	1348 Louvain-la-Neuve \\
	\email{johan.segers@uclouvain.be}
}

\date{\today}

\maketitle

\begin{abstract}
Monte Carlo integration with variance reduction by means of control variates can be implemented by the ordinary least squares estimator for the intercept in a multiple linear regression model with the integrand as response and the control variates as covariates. Even without special knowledge on the integrand, significant efficiency gains can be obtained if the control variate space is sufficiently large. Incorporating a large number of control variates in the ordinary least squares procedure may however result in (i) a certain instability of the ordinary least squares estimator and (ii) a possibly prohibitive computation time. Regularizing the ordinary least squares estimator by preselecting appropriate control variates via the Lasso turns out to increase the accuracy without additional computational cost. The findings in the numerical experiment are confirmed by concentration inequalities for the integration error.
\keywords{control variate \and Lasso \and Monte Carlo \and variable selection \and variance reduction}
\end{abstract}

\section{Introduction}

Whereas the basic Monte Carlo (MC) estimate {of an integral or expectation} is given by $(1/n)\sum_i f_i$, for independent and identically distributed random variables $f_i$, the control variates method is based on $(1/n)\sum _i (f_i + h_i) $, where the {variables $h_i$}, called control variates, are constructed to have zero expectation. When the controls $h_i$ have been selected or estimated properly (based on the samples $f_i$), the use of control variates might reduce the variance of the basic MC estimate significantly. The method of control variates, already used frequently to compute prices of financial derivatives \cite{glasserman:2013,gobet2010solving}, has been employed recently in many different fields of Machine Learning and Statistics. Examples include (i) \emph{reinforcement learning} and more particularly \emph{policy gradient} methods \cite{jie+p:2010,liu2018action-dependent} where the score function permits to define many control variates; (ii) inference in complex probabilistic models \citep{ranganath+g+s:2014} where the Stein method allows to define accurate control variates (see e.g., \citep{oates+g+c:2017,brosse2018diffusion,belomestny2020variance} and the references therein); (iii) gradient based \emph{optimization} \citep{wang+c+s+x:2013,gower2018tracking}, (iv) \textit{time series analysis} when approximating the characteristic function \citep{davis+s+k:2019}, and (v) semi-supervised inference \citep{zhang2019semi}.

Suppose that $m\geq 1$ control variates are available and $n\geq 1$ samples have been generated. Any linear combination of control variates can be used as a particular control variate. In terms of the variance of the estimation error, the optimal linear combination can be estimated based on the empirical risk minimization principle applied to an ordinary least squares (OLS) regression problem [see Eq.~\eqref{eq:ols} below]. This approach, referred to as OLS, is the most common implementation of the control variates method as detailed for instance in \citep[Section~8.3]{owen:13} or \citep{portier+s:2019,south+m+d+o:2018}, although other implementations are possible, see Remark~\ref{rem:ols} below.

Asymptotically, the OLS error is bounded by the MC error and is proportional to the $L_2$ approximation error of the integrand in the linear span of control variates \citep{glynn+s:2002}. In combination with well-known approximation results in $L_p$-spaces \citep{rudin:2006}, this representation of the OLS error suggests to use an increasing number of control variates. Indeed, in \citep{portier+s:2019} it is shown that when $m$ grows with $n$, the OLS error rate can be faster than $1/\sqrt{n}$.  

However, when based on a large number of control variates, the OLS suffers from two classical problems common for least squares methods: (i) numerical instabilities when the control variates are nearly collinear, and (ii) a computational complexity in $m^3 + nm^2$, which might be prohibitive. 

To deal with these two issues, it has been proposed in \cite{south+m+d+o:2018} to regularize the OLS estimate by adding a $\ell_1$-penalty term in the minimization problem, just as in the LASSO \citep{tibshirani:1996}. Simulation results in \cite{south+m+d+o:2018} show that this approach, referred to as LASSO, provides great improvements in practice. However, those practical findings are not supported by an asymptotic error rate nor by a non-asymptotic error bound.

The main objective of the paper is to provide a non-asymptotic theory for the use of control variates in Monte Carlo simulations. The contributions are as follows.

\begin{enumerate}[label=\arabic*., leftmargin=1pc]
	\item
	A \emph{new method} called LSLASSO is proposed. In the spirit of \citep{belloni+c:2013}, it consists in selecting the best control variates via the LASSO, using subsampling to decrease the computation time, and then to apply OLS with the selected controls.\\
	\item
	{\emph{Support recovery}: the LASSO is shown to select the correct control variates with large probability.} \\
	\item
	\emph{Concentration inequalities} are derived for the OLS, LASSO and LSLASSO	integration errors. The one for the OLS highlights a compromise between the approximation error of the integrand in the linear span of control variates and the multicollinearities between the control variates. The ones for {(LS)LASSO} show	significant improvements regarding the effects of multicollinearity.
\end{enumerate}

The approach {for the proofs} combines well known sub-Gaussian concentration inequalities \citep{boucheron+l+m:2013} {along with a  lower bound for the smallest eigenvalue of an empirical Gram matrix, based on a Chernoff inequality for matrices \citep[Theorem~5.1.1]{tropp2015introduction}.} 

The outline of the paper is as follows. Section~\ref{sec:MCcv} introduces the theoretical background and the different MC estimates and provides some comments about their practical implementation and some possible alternative approaches. Section~\ref{sec:ineq} contains the statements of the theoretical results. Sections~\ref{sec:simu} and~\ref{sec:bayes_inf} describe numerical experiments on artificial and real data to illustrate the finite-sample behavior of the methods. Section~\ref{sec:conclusion} concludes the main part of the paper with a discussion of avenues for further research. Section~\ref{app:aux} contains some auxiliary results, whereas the proofs of the four theorems stated in Section~\ref{sec:ineq} are given in Sections~\ref{app:th:ols} to~\ref{app:th:lslasso}.

\section{Monte Carlo integration and control variates}
\label{sec:MCcv}

\paragraph{Background.}
Let $f \in L_2(P)$ be a square integrable, real-valued function on a probability space $(\mathcal{X}, \mathcal{A}, P)$ of which we would like to calculate the integral
$$
	P(f) = \int_{\mathcal{X}} f(x) \, P(dx).
$$
The MC estimator of $P(f)$ based on independent random variables $X_1,\ldots,X_n $ {taking values in $\mathcal{X}$ and with common distribution $P$} is
$$
\hat \alpha_n ^{\mathrm{mc}} (f) 
= P_n(f) 
= \frac{1}{n}\sum_{i=1}^n f(X_i).
$$
This estimator is unbiased and has variance $ n^{-1} \sigma_0^2(f)$, where $\sigma_0^2(f) = P[(f-P(f))^2]$. 

The control variates are functions $h_1,\ldots,h_m \in L_2(P)$ with known expectations. Without loss of generality, assume that $P(h_k)=0$ for all $k\in \{1,\ldots,m\}$. Let $h = (h_1,\ldots,h_m)^T$ denote the $\reals^m$-valued function with the $m$ control variates as elements. Let $\mathcal{F}_m = \spann\{h_1,\ldots,h_m\} = \{\beta^T h : \beta \in \reals^m \}$ denote the closed linear subspace of $L_2(P)$ generated by the control variates.

For any coefficient vector $\beta = (\beta_1,\ldots,\beta_m)^T \in \mathbb{R}^m$, we have $P(f-\beta^T h) = P(f)$, so that $P_n(f-\beta^T h)$ is an unbiased estimator of $P(f)$, with variance $ n^{-1} P[(f-P(f) - \beta^T h)^2]$. Any oracle coefficient 
$$\beta^{\star}(f)\in \underset{\beta \in \mathbb{R}^m}{\arg \min} \ P[(f - P(f) - \beta^T h)^2]$$
minimizes the variance. If such a $\beta^{\star}(f)$ would be known, the resulting oracle estimator would be
\begin{ceqn}
\begin{equation}
\label{eq:aor}
\hat \alpha_n^{\mathrm{or}}(f) 
= P_n[f-\beta^{\star} (f)^T h].
\end{equation}
\end{ceqn}
By definition, the oracle estimator achieves the minimal variance $n^{-1} \sigma_ m ^2(f) $ where $\sigma_ m ^2(f) $ is the minimum value of the variance term $ \ P[(f - P(f) - \beta^T h)^2]$ with respect to $\beta$.
For any $m' \in \{0, 1, \ldots, m\}$, if we use only the first $m'$ control variates $h_1, \ldots, h_{m'}$, or even none at all in case $m' = 0$, we have $ \sigma_ {m} ^2(f)\leq \sigma_{m'}^2(f)$. In particular, if $\beta^\star(f)$ would be known, the use of control variates would always reduce the variance of the basic Monte Carlo estimator.

As $\beta^{\star}(f)^T h$ is the $L_2(P)$-projection of $f-P(f)$ on the linear vector space $\mathcal F_m$ and since the control variates are centered, $\beta^{\star }(f)$ satisfies the normal equations $P(h h^T)\beta^{\star}(f) = P(h f)$. The integral $ P(f)$ thus appears as the intercept of a linear regression model with response $f$ and explanatory variables $h_1, \ldots, h_m$, and it can be expressed as
\begin{ceqn}
\begin{equation}
\label{eq:risk_minimization}
(P(f),  \beta^\star(f)) \in 
\argmin_{(\alpha , \beta ) \in \reals \times \reals^m} 
P [ (f -  \alpha  - \beta^{T} h )^2   ]. 
\end{equation}
\end{ceqn}

The empirical risk minimization paradigm applied to the risk function on the right-hand side of \eqref{eq:risk_minimization} will lead to the OLS and LASSO estimates, to be defined further in this section. The same paradigm suggests the use of other regression methods for MC integration such as Principal Component Regression (PCR) or Ridge Regression, {which will not be considered in this paper.}

\begin{remark}[Choice of control variates] \label{rk:construction_variates}
	Which control variates work well depends on the problem. In the Black--Scholes model, for instance, an effective control variate for the price of an option is the geometric average of the price series \citep[Example~4.1.2]{glasserman:2013}). Two generic ways to construct control variates are to be noted. Whenever $P(dx) = w(x) \, Q(dx)$, where $w:\mathcal{X} \to [0, \infty)$ and $Q$ is a probability measure on $(\mathcal{X}, \mathcal{A})$, the quantity of interest is $P(f) = Q(wf)$, so that we can use control variates for $wf$ with respect to $Q$. This trick can be useful in combination with importance sampling \citep{owen+z:2000}. If $P$ has density $p$ with respect to the Lebesgue measure and if we have access to the derivatives of $p$, Stein's method might be used to build infinitely many control functions \citep{oates+g+c:2017}.
\end{remark}

\paragraph{Ordinary Least Squares Monte Carlo.}
Replacing the distribution $P$ by the sample measure $P_n$ in \eqref{eq:risk_minimization}, we obtain the OLS estimator $\hat{\alpha}_n^{\mathrm{ols}}(f)$ of $P(f)$ as a minimizer of the empirical risk 
\begin{ceqn}
\begin{equation*}
\mathcal{R}_n(\alpha,\beta) =\norm{f^{(n)}-\alpha \1_n-H \beta}_2^{2}
\end{equation*}
\end{ceqn}
 given by
\begin{ceqn}
\begin{equation}
\label{eq:ols}
\bigl(\aols(f) , \bols(f)\bigr) \in 
\argmin_{(\alpha , \beta ) \in \reals \times \reals^m}
\mathcal{R}_n(\alpha,\beta)
\end{equation}
\end{ceqn}
where $\norm{\,\cdot\,}_2$ denotes the Euclidean norm, $\1_n = (1,\ldots, 1)^T \in \reals^n$, $f^{(n)} = (f(X_1),\ldots,f(X_n))^T \in \reals^n$ and $H$ is the random $n \times m$ matrix defined by
\begin{ceqn}
\[
H 
= \bigl( h_j(X_i) \bigr)_{\substack{i = 1,\ldots,n\\j=1,\ldots,m}}.
\]
\end{ceqn}
The minimization problem in \eqref{eq:ols} can be expressed using an OLS estimate with centered variables as
\begin{ceqn}
\begin{equation}
\label{eq:ols:c}
\left\{
\begin{split}
\aols(f) &= P_n[f - \bols(f)^T h], \\
\bols(f) &\in \argmin_{\beta \in \reals^{m}} 
\norm{f_c^{(n)}-H_c \beta}_{2}^{2},
\end{split}
\right.
\end{equation}
\end{ceqn}
where $f_c^{(n)} = f^{(n)} - \1_n (\1_n^T f^{(n)})/n$ and $H_c = H - \1_n (\1_n^T H)/n$. Indeed, for fixed $\beta \in \reals^m$, the minimizer over $\alpha \in \reals$ of the objective function in \eqref{eq:ols} is just $P_n(f - \beta^T h) = P_n(f) - \beta^T P_n(h)$, and since $P_n(f) = (\1_n^T f^{(n)}) / n$ and $P_n(h) = (\1_n^T H)/n$, the equivalence of \eqref{eq:ols} and \eqref{eq:ols:c} follows.

\begin{remark}[Variations]
	\label{rem:ols}
The solution of the linear regression problem \eqref{eq:ols:c} involves the empirical covariance matrix defined by $n^{-1} H_c^T H_c = P_n(h h^T) - P_n(h) P_n(h^T)$. Using different estimates of the Gram matrix $P(h h^T)$ leads to alternative control variate MC estimates for $P(f)$ \citep{glynn+s:2002,portier+s:2019}. For fixed $m$ and as $n \to \infty$, all these estimators are consistent and asymptotically normal. The OLS estimator, however, is the only one that can integrate both the constant functions and the control functions without error.
\end{remark}

\begin{remark}[Invariance]
	\label{scale_invariance}
	The OLS estimator does not change if we replace the control variate vector $h$ by $Ah$, where $A$ is an arbitrary invertible $m \times m$ matrix. Provided the control functions are linearly independent, the property of isotropy, i.e., $P(hh^T) = I_m$, can therefore always be enforced by an appropriate linear transformation of the vector of control variates. 
\end{remark}

\begin{remark}[Computation time]
\label{rk:comput_time}
The computation time of the OLS method is of the order $nm^2+ m^3+nt$, where $n m^2$ and $m^3$ operations are needed for computing and inverting $H_c^T H_c$ respectively and where $t$ stands for the time needed to evaluate $f$. Computational benefits occur when there are multiple integrands, since the OLS estimate can be represented as $w^Tf^{(n)}$, where the weight vector $w \in \reals^n$ does not depend on the integrands \cite{portier+s:2019}. If $q$ integrals need to be evaluated, the computing time becomes $nm^2 + m^3 + qnt$, since the matrix $H_c^T H_c$ only depends on the control variates but not on the integrand.
\end{remark}

\begin{remark}[Variance reduction]\label{rk_sigma_decay} The advantage of using a given set of $m$ control variates over standard MC can be assessed through the value of the residual standard deviation $\sigma_m(f)$. In \cite{portier+s:2019}, bounds for $\sigma_m(f)$ are computed in specific examples. For instance, if $\mathcal X = [-1,1]^d$ and the $h_k$ are tensor products of Legendre polynomials, then for any $k$-times continuously differentiable function $f$ it holds that $\sigma_m (f)  = O( m^{- k / d } )$ as $m \to \infty$. This bound emphasizes the benefits of using polynomials when the integrand is regular.
\end{remark}

\paragraph{LASSO Monte Carlo.}
The LASSO, introduced in \cite{tibshirani:1996}, is a regression technique that consists in minimizing the usual least squares loss {plus an $\ell_1$-penalty term on the vector of regression coefficients.} In contrast with OLS, {the LASSO usually produces a vector with many zero coefficients}, meaning that the corresponding variables are no longer included in the predictive model. The LASSO thus achieves estimation and variable selection at the same time. As the use of control variates in MC integration is linked with regression, the LASSO can take advantage from situations where many control variates are present but not all of them are useful. 

The LASSO estimator $\hat \alpha_n^{\mathrm{lasso}}(f)$ of $P(f)$ follows from adding a $\ell_1$-penalization to the objective function in~\eqref{eq:ols}. It is formally defined as
\begin{ceqn}
\begin{equation*}
\bigl(\hat \alpha_n^{\mathrm{lasso}}(f), \hat \beta_n^{\mathrm{lasso}}(f)\bigr)
\in \argmin\limits_{(\alpha, \beta) \in \reals \times \reals^{m}} \frac{1}{2n} \mathcal{R}_n(\alpha,\beta)
+ \lambda \norm{\beta}_{1}
\end{equation*}
\end{ceqn}
where $\norm{\,\cdot\,}_1$ denotes the $\ell_1$-norm on Euclidean space. By the same argument used to justify the equivalence of \eqref{eq:ols} and \eqref{eq:ols:c}, the LASSO can be based on centered variables via
\begin{ceqn}
\begin{equation}
\left\{
\begin{split}
\hat \alpha_n^{\mathrm{lasso}}(f) 
&= P_n [f - \hat \beta_n^{\mathrm{lasso}}(f)^T h], \\
\label{eq:lasso}
\hat \beta_n^{\mathrm{lasso}}(f) 
&\in \argmin_{\beta \in \reals^{m}}
\frac{1}{2n} \norm{f_c^{(n)} - H_c \beta}_{2}^{2} 
+ \lambda\norm{\beta}_{1}.
\end{split}
\right.
\end{equation}
\end{ceqn}

\begin{remark}[Computation]\label{rk:comput_lasso}
	For the practical implementation of the LASSO, it is commonly recommended to first center and rescale the explanatory variables empirically \citep[section 2.2]{tibshirani+w+h;2015}. The centering by the sample mean is taken care of in \eqref{eq:lasso}. However, for ease of presentation, no empirical rescaling of the control variates is considered in the theoretical analysis. This is in line with the approach proposed in \citep[Chapter~11]{tibshirani+w+h;2015}. Still, such rescaling is done in the simulation experiments reported in Section~\ref{sec:simu}.
\end{remark}

\begin{remark}[Computation time]\label{rk:comput_time_lasso}
The LASSO solution is usually computed approximately by \emph{cyclical coordinate descent}. At each iteration, this algorithm minimizes \eqref{eq:lasso} with respect to a single coordinate, say $\beta_k$, while considering other coordinates, $\beta_{(-k)}\in \mathbb R ^{m-1} $, as constant. This one-dimensional optimization problem has an explicit argmin. Let $H_{c,k}$ be the $k$-th column of $H_c$ that has been normalized such that $\|H_{c,k}\|_2 = 1$ (as indicated in the previous remark). The argmin is then simply given by $\eta_{\lambda } ( \langle z_k, H_{c,k}  \rangle )$ where $ z_k = f_c^{(n)} - H_{c,(-k)} \beta_{(-k)} $, 	$H_{c,(-k)}$ is obtained by removing $H_{c,k}$ from $H_c$ and $\eta$ is the \emph{soft-thresholding function} \citep[Section~2.4, Eq. (2.14)]{tibshirani+w+h;2015}. Since $n$ operations are needed to update $z_k$ and the same number is needed to compute the scalar product, the LASSO requires only $n D  + n t$ operations, where $D$ stands for the number of iterations conducted in the cyclical coordinate descent and $t$ represents the time needed to evaluate $f$. The value of $D$ is often imposed by a stopping rule within the algorithm but it could also be fixed by the user in order to control the computing time. The selection of the next coordinate $k$ to update can be done cyclically or at random.
\end{remark}

\paragraph{LSLASSO Monte Carlo.}
The application of ordinary least squares after model selection by the LASSO has been recently studied in \cite{belloni+c:2013}. They show, in the  setting of nonparametric regression, that OLS post-LASSO, which is also known under the name LSLASSO, performs better than the LASSO in terms of rate of convergence. Motivated by this result we propose to first use the LASSO to select the active variables among a large number of control variates and then to compute {the OLS estimate} using only the variables selected at the previous stage. We refer to this approach as the LSLASSO. To decrease the computation time when the dimensions involved in the problem, either $n$ or $m$, are large, we recommend to use sub-sampling of {size $N$ smaller than $n$} when conducting the first step.

The \emph{active set} associated to the coefficient $\beta \in \reals^m$ is $\supp(\beta) = \{ j = 1,\ldots, m\,:\, \beta_j \neq 0 \}$. Let {$\hat{S}_N = \supp(\hat{\beta}_N^{\mathrm{lasso}}(f))$} denote the active set of control variates based on the LASSO coefficient vector defined as in \eqref{eq:lasso} but using only the first $N$ random variables $X_1, \ldots, X_N$ generated. 
The LSLASSO estimate $\hat \alpha_n^{\mathrm{lslasso}}(f)$ of $P(f)$ is then defined as the OLS estimate in~\eqref{eq:ols} based on the full sample $X_1, \ldots, X_n$ but using only the control variates $h_j$ restricted to $j \in \hat{S}_N$, that is,
\begin{ceqn}
	\begin{equation*}
\bigl(\hat \alpha_n^{\mathrm{lslasso}}(f) , \hat \beta_n^{\mathrm{lslasso}}(f)\bigr) \in 
\argmin_{(\alpha , \beta ) \in \reals \times \reals^{\hat \ell}}
\norm{f^{(n)} - \alpha \1_n - H^{(n)}_{\hat S_N} \beta}_{2}^{2}
\end{equation*}
\end{ceqn}
where $H^{(n)} _ {\hat S_N} $ is the $n \times \hat{\ell}$ matrix $ ( h_j(X_i) )_{i = 1,\ldots,n,\,j\in \hat S_N}$ and $\hat \ell $ is the cardinality of $\hat{S}_N$.

\begin{remark}[Computation time]\label{rk:comput_time_lslasso}
The number of operations needed for the LSLASSO is of the order $ND + n \hat{\ell}^2 + \hat{\ell}^3 + nt$, combining the cost of selecting the control variates on the subsample of size $N$ via cyclical coordinate descent as in Remark~\ref{rk:comput_time_lasso} and running the OLS estimate based on the selected control variates for the full sample of size $n$ as in Remark~\ref{rk:comput_time}.
\end{remark}

\section{Non-asymptotic bounds}
\label{sec:ineq}
To derive concentration inequalities for the errors of the estimators proposed in Section~\ref{sec:MCcv}, we use the notion of sub-Gaussianity as defined for instance in \citep[Section~2.3]{boucheron+l+m:2013}. Recall that the moment generating function of a centered Gaussian random variable with variance $\sigma^2$ is equal to $\lambda \mapsto \exp(\lambda^2 \sigma^2 / 2)$.

\begin{definition}
	A centered random variable $Y$ is \emph{sub-Gaussian} with variance factor $\tau^2 > 0$, notation $Y \in \subG(\tau^2)$, if and only if $\log \expec[\exp(\lambda Y)] \le \lambda^2 \tau^2 / 2$ for all $\lambda \in \reals$.
\end{definition}

If $Y \in \subG(\tau^2)$, then necessarily $\var(Y) \le \tau^2$ \citep[Exercise~2.16]{boucheron+l+m:2013}. Chernoff's inequality provides exponential bounds on the tails of sub-Gaussian random variables. Moreover, the sum of independent sub-Gaussian variables is again sub-Gaussian. Centered, bounded random variables taking values in an interval $[a, b]$ are sub-Gaussian with variance factor at most $(b-a)^2/4$ \citep[Lemma~2.2]{boucheron+l+m:2013}.

The concentration inequalities for the various Monte Carlo methods with control variates will be largely due to the following assumption that requires the residuals to be sub-Gaussian.

\begin{assumption}[Sub-Gaussian residuals]\label{ass_1_sub_gauss}
	The residual function $\epsilon = f - P(f) - \beta^{\star}(f)^T h$ satisfies $\epsilon \in \subG(\taum^2)$ for some $\taum > 0$, that is, $\int_{\mathcal{X}} \exp\{\lambda \epsilon(x)\} \, P(dx) \le \exp(\lambda^2 \taum^2/2)$ for all $\lambda \in \reals$.
\end{assumption}

The estimation error of the oracle estimator in \eqref{eq:aor} is just $\hat{\alpha}_n^{\mathrm{or}}(f) - P(f) = P_n(\epsilon) = n^{-1} \sum_{i=1}^n \epsilon(X_i)$. Under Assumption~\ref{ass_1_sub_gauss}, this is a sub-Gaussian variable with variance factor $\tau^2/n$. Chernoff's inequality \citep[p.~25]{boucheron+l+m:2013} then implies that for all $\delta \in (0, 1)$ and all integer $n \ge 1$, with probability at least $1-\delta$,
\begin{ceqn}
\begin{equation}
\label{ineq:aor}
\abs{\hat{\alpha}_n^{\mathrm{or}}(f)- P(f)}
\le \sqrt{2 \log(2/\delta)} \frac{\taum}{\sqrt{n}}.
\end{equation}
\end{ceqn}
This concentration inequality provides a baseline when the best possible control variate in the space $\mathcal{F}_m$ is selected. The case $m = 0$ also covers the basic MC method: in that case, $\tau^2$ is the variance factor of the sub-Gaussian variable $f - P(f)$ on $(\mathcal{X}, \mathcal{A}, P)$.

\begin{assumption}[Bounded control variates]
	\label{ass_2_bounded_control}
	{The control variates $h_1, \ldots, h_m \in L_2(P)$ are uniformly bounded. Put $U_h := \max_{j=1,\ldots,m} \sup_{x \in \mathcal{X}} \abs{h_j(x)}$}.
\end{assumption}

{For a symmetric real matrix $A$, let $\lambda_{\min}(A)$ and $\lambda_{\max}(A)$ denote its smallest and largest eigenvalues, respectively.}

\begin{assumption}[Linear independence of control variates]
	\label{ass_1_ev_cond}
	The control variates $h_1$, $\ldots$, $h_m \in L_2(P)$ are linearly independent. As a consequence, the $m \times m$ Gram matrix $G := P(hh^T)$ is positive definite and its smallest eigenvalue $\gamma := \lambda_{\min}(G)$ is positive.
\end{assumption}

Consider the ortho-normalized vector of control variates $\hbar = (\hbar_1, \ldots, \hbar_m)^\top = G^{-1/2} h$ and put
\begin{ceqn}
\begin{equation}
\label{eq:B}
B
= \sup_{x \in \mathcal{X}} h(x)^\top G^{-1} h(x)
= \sup_{x \in \mathcal{X}} \hbar(x)^\top \hbar(x),
\end{equation}
\end{ceqn}
a finite quantity by Assumptions~\ref{ass_2_bounded_control} and~\ref{ass_1_ev_cond}. The error OLS estimation error is subject to the following concentration bound.

\begin{theorem}[Concentration inequality for OLS]
	\label{th:ols}
	Suppose Assumptions~\ref{ass_1_sub_gauss},~\ref{ass_2_bounded_control} and~\ref{ass_1_ev_cond} hold. Then for all $\delta \in (0, 1)$ and all integer $n$ such that
	\begin{ceqn}
	\begin{equation*}
	n \ge \max\bigl(18 B \log(4m / \delta), \; 75 m \log(4/\delta) \bigr)
	\end{equation*}
	\end{ceqn}
	we have, with probability at least $1 - \delta$,
	\begin{multline}
	\label{ineq:aols}
	\abs{\aols(f) - P(f)}
	\le \sqrt{2 \log(8/\delta)} \frac{\tau}{\sqrt{n}} 
	  \\ 
	+ 58 \sqrt{B m \log(8m/\delta) \log(4/\delta)}
	\frac{\tau}{n}.
	\end{multline}
\end{theorem}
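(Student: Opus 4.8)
The plan is to reduce to the isotropic case, split the error into the oracle error plus a remainder, and control the remainder by a Cauchy--Schwarz decomposition into a cross-term and an empirical-mean term, each handled by variance-aware concentration, together with a lower bound on the smallest eigenvalue of the empirical Gram matrix.

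First, using the invariance noted in Remark~\ref{scale_invariance}, I would assume without loss of generality that $G = P(hh^T) = I_m$ (replace $h$ by $G^{-1/2}h$); this leaves $\aols(f)$, $\beta^\star(f)$, $\epsilon$, $\tau$ and $B$ unchanged, yields $\norm{h(x)}_2^2 \le B$ for all $x$, and gives the useful bound $B \ge \trace(G^{-1/2}GG^{-1/2}) = m$. Writing $\hat\beta = \bols(f)$ and using $P(h)=0$, $P(\epsilon)=0$ together with the normal equations $P(h\epsilon)=0$, I would establish the exact identity
\[
\aols(f) - P(f) = P_n(\epsilon) + \bigl(\beta^\star(f) - \hat\beta\bigr)^T P_n(h),
\]
and, from the centered normal equations, $\hat\beta - \beta^\star(f) = \hat\Sigma_n^{-1}\bigl(P_n(h\epsilon) - P_n(h)\,P_n(\epsilon)\bigr)$, where $\hat\Sigma_n = P_n(hh^T) - P_n(h)P_n(h)^T$ is the empirical covariance matrix.

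The first term $P_n(\epsilon)$ is exactly the oracle error; by Assumption~\ref{ass_1_sub_gauss} it is sub-Gaussian with variance factor $\tau^2/n$, so Chernoff's inequality with budget $\delta/4$ gives the leading $\sqrt{2\log(8/\delta)}\,\tau/\sqrt n$ term, as in~\eqref{ineq:aor}. For the remainder I would apply Cauchy--Schwarz,
\[
\abs{(\beta^\star - \hat\beta)^T P_n(h)} \le \lambda_{\min}(\hat\Sigma_n)^{-1}\,\norm{P_n(h\epsilon) - P_n(h)P_n(\epsilon)}_2\,\norm{P_n(h)}_2,
\]
and bound the three factors separately. The eigenvalue factor is handled by the matrix Chernoff inequality \citep[Theorem~5.1.1]{tropp2015introduction} applied to the average of the independent PSD matrices $h(X_i)h(X_i)^T \preceq B\,I_m$ with mean $I_m$: the condition $n \ge 18B\log(4m/\delta)$ forces $\lambda_{\min}(P_n(hh^T)) \ge 1/2$ with probability $1-\delta/4$, while the condition $n \ge 75m\log(4/\delta)$ makes the centering correction $\norm{P_n(h)}_2^2$ negligible, so that $\lambda_{\min}(\hat\Sigma_n)$ is bounded below by a positive constant. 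The empirical-mean factor I would bound coordinatewise by Bernstein's inequality: under isotropy each coordinate $P_n(h_j)$ has variance $P(h_j^2)=1$, so a union bound over the $m$ coordinates (budget $\delta/4$) yields $\norm{P_n(h)}_2^2 \lesssim m\log(8m/\delta)/n$. The cross-term factor I would bound by concentrating the Euclidean norm $\norm{P_n(h\epsilon)}_2$ about its expectation, using the crucial identity $\expec\norm{P_n(h\epsilon)}_2^2 \le n^{-1}P(\norm{h}_2^2\epsilon^2) \le B\tau^2/n$, which gives $\norm{P_n(h\epsilon)}_2^2 \lesssim B\tau^2\log(4/\delta)/n$ (budget $\delta/4$). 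The product of the two norm bounds is then of order $\sqrt{Bm\log(8m/\delta)\log(4/\delta)}\,\tau/n$, matching the claim.

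The main obstacle is getting the dimension dependence exactly right in these last two concentration bounds. A naive covering-number argument over the unit sphere, or a coordinatewise Hoeffding bound, would replace the variances $P(h_j^2)$ and $P(\norm{h}_2^2\epsilon^2)$ by ranges such as $\norm{h_j}_\infty^2$ (whose sum can exceed $B$) or introduce a spurious dimension factor $m$ in place of $\log m$. The point is therefore to use variance-aware (Bernstein-type) concentration so that the budgets are measured by $\trace(G)=m$ and $\sup_x\norm{h(x)}_2^2 = B$ rather than by $\sum_j\norm{h_j}_\infty^2$. A secondary technical wrinkle is that the products $h_j\epsilon$ are only sub-exponential (bounded times sub-Gaussian) rather than sub-Gaussian, so one must verify that the sub-exponential scale affects only the lower-order $1/n$ contribution and not the leading variance term; this is presumably where the auxiliary results of Section~\ref{app:aux} enter. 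Finally, I would collect the four failure events, each of probability at most $\delta/4$, and track the numerical constants to arrive at the factor $58$ and the stated logarithmic arguments.
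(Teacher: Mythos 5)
Your proposal follows the same architecture as the paper's own proof: reduction to the orthonormalized controls $\hbar = G^{-1/2}h$, the identity $\aols(f) - P(f) = P_n(\epsilon) + (\beta^\star(f)-\hat\beta)^T P_n(h)$, a Cauchy--Schwarz bound whose denominator is controlled by the matrix Chernoff inequality of \citep[Theorem~5.1.1]{tropp2015introduction}, and a union bound over four $\delta/4$-events. The gap lies in how you allocate the $\log m$ factor between the two vector terms $\norm{P_n(\hbar)}_2$ and $\norm{P_n(\hbar\epsilon)}_2$. Your key claim, $\norm{P_n(\hbar\epsilon)}_2^2 \lesssim B\tau^2\log(4/\delta)/n$, is a \emph{dimension-free} deviation bound for a sum of \emph{unbounded} random vectors: the $\hbar(X_i)\epsilon(X_i)$ only have sub-Gaussian norms, since $\norm{\hbar(X_i)\epsilon(X_i)}_2 \le \sqrt{B}\,\abs{\epsilon(X_i)}$. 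Passing from $\expec\norm{P_n(\hbar\epsilon)}_2^2 \le B\tau^2/n$ to a high-probability bound with only a $\log(1/\delta)$ penalty is not something you justify with any named tool (Markov gives only a $1/\delta$ penalty), and the auxiliary results of Section~\ref{app:aux} to which you defer do not supply it: the Hoeffding inequality for norm-subGaussian vectors (Lemma~\ref{lem:HffnsbG}, after \citet[Corollary~7]{jin+etal:2019}) yields $\norm{\sum_i \hbar(X_i)\epsilon(X_i)}_2 \le 3\sqrt{nB\tau^2\log(2m/\delta)}$, i.e.\ it carries precisely the $\log m$ factor you are trying to avoid on this term.

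If you patch your argument with that lemma, both of your factors then carry a $\log(8m/\delta)$: your coordinatewise-Bernstein-plus-union-bound treatment of $\norm{P_n(\hbar)}_2$ already gives $\norm{P_n(\hbar)}_2^2 \lesssim m\log(8m/\delta)/n$, so the product becomes of order $\sqrt{Bm}\,\log(8m/\delta)\,\tau/n$, which is strictly weaker than the stated $\sqrt{Bm\log(8m/\delta)\log(4/\delta)}\,\tau/n$ (note $\log(8m/\delta) \ge \log(4/\delta)$). The fix is to swap the allocation, which is exactly what the paper does: the vectors $\hbar(X_i)$ \emph{are} bounded, $\sup_x\norm{\hbar(x)}_2 \le \sqrt{B}$ by Assumptions~\ref{ass_2_bounded_control} and~\ref{ass_1_ev_cond}, so the dimension-free vector Bernstein inequality of \cite[Lemma~9]{hsu+etal:2014} applies to \emph{them}, with $\sum_i \expec\norm{\hbar(X_i)}_2^2 = nm$, giving $\norm{\sum_i \hbar(X_i)}_2 \le 5\sqrt{nm\log(4/\delta)}$ with no $\log m$; the unavoidable $\log(8m/\delta)$ is then confined to the cross term via Lemma~\ref{lem:HffnsbG}. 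The product of these two bounds, divided by the denominator lower bound $\lambda_{\min}(P_n(\hbar\hbar^\top)) - \norm{P_n(\hbar)}_2^2 \ge 2/3 - 1/3 = 1/3$, reproduces \eqref{ineq:aols} with the constant $15(3+\sqrt{2/3}) \approx 57.2 < 58$. The remaining ingredients of your proposal (oracle term, eigenvalue bound, negligibility of $\norm{P_n(\hbar)}_2^2$) are sound and match the paper's Steps~1--4.
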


Compared to the bound \eqref{ineq:aor} for the oracle estimator, the bound \eqref{ineq:aols} for the OLS estimator has an additional term. This term is due to the additional learning step that is needed to estimate the optimal control variate. 

\begin{remark}[On the factor $B$]
	\label{rem:factorB}
Defined as the supremum of the \emph{leverage function} $q_n$ in \cite[Eq.~(14)]{portier+s:2019}, the quantity $B$ plays an important role in our analysis as well as in other regression studies \citep{hsu2012random,newey1997convergence}. Just as the OLS estimate (see Remark \ref{scale_invariance}), the quantity $B$ remains invariant by invertible linear transformation of the control variates.  We have
\begin{ceqn}
\begin{equation*}	
m\le	B 
	\le \sup_{x \in \mathcal{X}} h^\top(x) h(x) / \gamma 
	\le m U_h^2/\gamma .
\end{equation*}
\end{ceqn}
\end{remark}

\begin{remark}[On the parameters $\tau$ and $\gamma$]\label{rk:param_meaning}
The parameter $\tau$ in Assumption~\ref{ass_1_sub_gauss} is by definition an upper bound of the residual variance $\sigma^2_m(f)$. In many situations, its value is not too far from $\sigma^2_m(f)$. Hence, $\tau$ should capture the adequacy between the control variate space and the integrand $f$ and should decrease with $m$. The full rank condition expressed in Assumption~\ref{ass_1_ev_cond} is not crucial as one could work with the Moore--Penrose inverse when solving \eqref{eq:ols:c}. More importantly, a large value of the minimal eigenvalue $\gamma$ of the Gram matrix $G$ reflects that the OLS problem is well-conditioned, enhancing numerical stability. As control functions are added, rows and columns are added to $G$ and so $\gamma$ cannot increase. For the Fourier basis in Example~\ref{ex:fourier}, we have $\gamma=1$, while for the Legendre polynomials in Example~\ref{ex:legendre}, we have $\gamma \simeq 1/m$.
\end{remark}

\begin{remark}[Link with OLS prediction risk analysis]
The approach taken in the proof of Theorem \ref{th:ols} requires to bound what is called the prediction risk, defined as $\|G^{1/2} ( \bols(f) -\beta^\star (f))  \|_2 $. With probability greater than $1- \delta$, we obtain an upper bound of order $\sqrt{B\tau^2\log(m/\delta) / n}  $ on the prediction risk. This makes our approach comparable to the one of the recent study \citep{hsu2012random} where concentration bounds for the OLS prediction risk (and ridge) with random design are established. In contrast to their bound, our bound involves the quantity $B$ which shares the same invariant property as the OLS estimate and we don't require the noise to be sub-Gaussian \textit{conditionally on the covariate} but just sub-Gaussian which is weaker.
\end{remark}

\begin{remark}[Rates]
	\label{rem:rate}
	Consider an asymptotic set-up where the number of control variates $m$ tends to infinity with the Monte Carlo sample size $n$. The OLS method improves upon the basic MC method ($m = 0$), which has rate $1/\sqrt{n}$, as soon as $\tau + \tau \sqrt{{mB} \log(m)   / {n}} \to 0$. To recover the same order as the one of the oracle estimator $\hat{\alpha}_{n}^{\mathrm{or}}(f)$, which has rate $\tau / \sqrt{n}$, one must have ${mB} \log(m)  = O(n)$ as $n \to \infty$, that is, $m$ must not be too large compared to $n$.
\end{remark}

\begin{remark}[{Leverage condition}]
Theorem \ref{th:ols} may be seen as a non-asymptotic version of  the asymptotic results provided in \cite{portier+s:2019} in which the \textit{leverage condition},  $\sup \{ h(x)^T G^{-1} h(x) : x \in \mathcal{X} \} = o(n/m)$, is required to obtain a similar (asymptotic) bound (see Theorem~1 therein) as the one of Theorem \ref{th:ols}. In the present non-asymptotic version, the \textit{leverage condition} is expressed through $mB$ 
when requiring that $18 B \log(4m / \delta) \le n $.
\end{remark}
LASSO takes advantage of \emph{sparse} regression models. A regression model is sparse whenever many of the coefficients of the parameter vector $\beta$ are equal to zero, i.e., many of the covariates are useless to predict the output {in the presence of the other covariates}. 
 The number of elements in the active set of the vector of regression coefficients $\beta^\star(f)$,
 \begin{ceqn}
\[
	S^\star := \supp(\beta^\star(f)),
\]
\end{ceqn}
is denoted by $\ell^\star := \abs{S^\star}$ and quantifies the level of sparsity associated to the regression model. To avoid trivialities, we tacitly assume that $S^\star$ is non-empty, so $\ell^\star \ge 1$. The factor $\ell^\star$ represents the level of sparsity of $f$ with respect to the control functions and plays an important role in describing the benefits of the LASSO over the OLS. No assumption is made on $\ell^\star$, which could be any integer in $\{1,\ldots, m\}$.

We follow the approach presented in \citep[Section~11.4.1]{tibshirani+w+h;2015} (see also \cite{bickel2009simultaneous,van2009conditions}), in which the analysis of the LASSO is carried out using a \emph{restricted eigenvalue condition}. For a vector $\beta \in \reals^m$ and for a non-empty set $S \subset \{1, \ldots, m\}$, write $\beta_S = (\beta_k)_{k \in S}$, seen as a (column) vector in $\reals^{\abs{S}}$. Define a collection of cones of interest. For $\alpha>0$ and $S \subset \{1,\ldots,m\}$, we set $\overline{S} = \{1,\ldots,m\} \setminus S$ and
\begin{ceqn}
\begin{equation*}
	\mathcal{C}(S; \alpha) 
	= \{ u\in \mathbb R^m \,:\,  \norm{u_{\overline{S}}}_1 \leq \alpha \norm{u_S}_1 \}.
\end{equation*}
\end{ceqn}

\begin{assumption}[Restricted eigenvalue condition]
	\label{ass_4_ev_cond_lasso}
There exists $\gamma^\star >0$ such that $u^T G u  \ge \gamma^\star \norm{{u}}_2^2  $ for all  $u \in \mathcal C (S^\star ; 3)$.
\end{assumption}

In practice, we do not know the active set $S^\star$, so the only way to ensure Assumption~\ref{ass_4_ev_cond_lasso} is to make sure all control variates $h_1, \ldots, h_m$ are linearly independent. The practical value of the assumption is that $\gamma^\star \ge \gamma$, yielding sharper bounds below.

Recall that the $\ell_1$-penalty of the LASSO is weighted by a regularization parameter $\lambda > 0$.

\begin{theorem}[Concentration inequality for LASSO]
	\label{th:lasso}
Suppose Assumptions~\ref{ass_1_sub_gauss},~\ref{ass_2_bounded_control} and~\ref{ass_4_ev_cond_lasso} hold. Introduce $\xi = \ell^\star(U_h^2/\gamma^\star)$. Then for all $\delta \in (0, 1)$ and all integer $n$ such that
\begin{align*}
n &\geq \max\left( 8 \xi^2 \log(8m^2/\delta);128 \xi \log(8m/\delta) \right),\\  
\lambda &\ge 7 U_h \sqrt{\log(8m/\delta)} \tau / \sqrt{n}
\end{align*}	
we have, with probability at least $1 - \delta$,
	\begin{multline}
	\label{ineq:alasso_general}
	\abs{\alasso(f) - P(f)}
	\le \sqrt{2 \log(8/\delta)} \frac{\tau}{\sqrt{n}} \\
	+ 68 \lambda \ell^\star \sqrt{\log(8m/\delta)} \frac{U_{h}/\gamma^\star}{\sqrt{n}}.
	\end{multline}
	For $\lambda$ equal to the lower bound, we have on the same event
	\begin{multline}
	\label{ineq:alasso}
	\abs{\alasso(f) - P(f)}
	\le \sqrt{2 \log(8/\delta)} \frac{\tau}{\sqrt{n}}  \\
	+ 476 \ell^\star \log(8m/\delta)(U_{h}^2/\gamma^\star) \frac{\tau}{n}.
	\end{multline}
\end{theorem}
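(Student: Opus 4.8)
The plan is to decompose the estimation error and control each piece separately. Writing $\epsilon = f - P(f) - \beta^{\star}(f)^\top h$ for the oracle residual and using $P(h) = 0$, a direct computation gives the identity
\begin{equation*}
\alasso(f) - P(f) = P_n(\epsilon) + \bigl(\beta^{\star}(f) - \blasson(f)\bigr)^\top P_n(h).
\end{equation*}
The first term is exactly the oracle error of \eqref{ineq:aor}, which under Assumption~\ref{ass_1_sub_gauss} is sub-Gaussian with variance factor $\tau^2/n$; Chernoff's inequality bounds it by $\sqrt{2\log(8/\delta)}\,\tau/\sqrt{n}$ with probability at least $1-\delta/4$, producing the first term of~\eqref{ineq:alasso_general}. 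For the second term I would apply H\"older's inequality, $\abs{(\beta^{\star}(f)-\blasson(f))^\top P_n(h)} \le \norm{\blasson(f)-\beta^{\star}(f)}_1\,\norm{P_n(h)}_\infty$, and bound the two factors in turn.

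The factor $\norm{P_n(h)}_\infty$ is easy: each coordinate $P_n(h_k)$ is an average of i.i.d.\ centered variables bounded by $U_h$ in absolute value, hence sub-Gaussian with variance factor $U_h^2/n$, so a union bound over $k=1,\ldots,m$ gives $\norm{P_n(h)}_\infty \le U_h\sqrt{2\log(8m/\delta)/n}$ with probability at least $1-\delta/4$. The real work is the $\ell_1$-estimation bound $\norm{\blasson(f)-\beta^{\star}(f)}_1 \le C\,\ell^\star\lambda/\gamma^\star$, which I would obtain from the standard LASSO analysis built on the basic inequality. After substituting the centered model $f_c^{(n)} = H_c\beta^{\star}(f) + \epsilon_c^{(n)}$ into the optimality of $\blasson(f)$, the argument rests on two events. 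First, the regularization must dominate the noise, i.e.\ $\tfrac1n\norm{H_c^\top\epsilon^{(n)}}_\infty \le \lambda/2$; since $P(h_k\epsilon)=0$, each coordinate is a centered average of products of a bounded and a sub-Gaussian factor, which I would control with an auxiliary product-concentration bound (Section~\ref{app:aux}), and the choice $\lambda \ge 7U_h\sqrt{\log(8m/\delta)}\,\tau/\sqrt{n}$ is exactly what makes this hold with probability at least $1-\delta/4$. On this event the error vector $\nu = \blasson(f)-\beta^{\star}(f)$ lies in the cone $\mathcal{C}(S^\star;3)$.

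Second --- and this is the step I expect to be the main obstacle --- the restricted eigenvalue condition of Assumption~\ref{ass_4_ev_cond_lasso}, stated for the population Gram matrix $G$, must be transferred to the empirical Gram matrix $\hat G = P_n(hh^\top) - P_n(h)P_n(h^\top)$. For $u \in \mathcal{C}(S^\star;3)$ one has $\norm{u}_1 \le 4\sqrt{\ell^\star}\norm{u}_2$, so using $\abs{u^\top(\hat G - G)u} \le \norm{\hat G - G}_{\max}\norm{u}_1^2 \le 16\ell^\star\norm{\hat G - G}_{\max}\norm{u}_2^2$ it suffices to show $\norm{\hat G - G}_{\max} \le \gamma^\star/(32\ell^\star)$ in order to retain the empirical restricted eigenvalue $\gamma^\star/2$. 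Controlling this entrywise deviation by Bernstein's inequality over the $m^2$ entries (each a centered average of terms bounded by $U_h^2$, plus the negligible $P_n(h)P_n(h^\top)$ correction), whose two tail regimes produce respectively the $\xi^2$ and $\xi$ dependence with $\xi = \ell^\star U_h^2/\gamma^\star$, is precisely what the sample-size requirements $n \ge 8\xi^2\log(8m^2/\delta)$ and $n \ge 128\xi\log(8m/\delta)$ guarantee with probability at least $1-\delta/4$; the matrix Chernoff inequality of \cite[Theorem~5.1.1]{tropp2015introduction} announced in the introduction would serve the same purpose at the level of eigenvalues rather than entries. With both events in force, the basic inequality combined with the empirical restricted eigenvalue yields $\norm{\nu}_1 \le C\ell^\star\lambda/\gamma^\star$, and collecting the four sub-events (each of probability $\delta/4$) gives~\eqref{ineq:alasso_general} with probability at least $1-\delta$. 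Finally, inequality~\eqref{ineq:alasso} follows by mere substitution of the admissible value $\lambda = 7U_h\sqrt{\log(8m/\delta)}\,\tau/\sqrt{n}$ into~\eqref{ineq:alasso_general}, the constant $68 \times 7 = 476$ appearing as advertised.
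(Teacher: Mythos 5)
Your proposal retraces the paper's proof step for step: the same error decomposition, the same H\"older split $\abs{(\beta^\star(f)-\blasson(f))^T P_n(h)} \le \norm{\blasson(f)-\beta^\star(f)}_1 \norm{P_n(h)}_\infty$, essentially the same four concentration events, the same cone/basic-inequality route to the $\ell_1$-bound, and the same constants $68$ and $476 = 68\times 7$. The genuine gap sits exactly in the step you flag as the main obstacle: transferring the restricted eigenvalue condition to the empirical Gram matrix. You correctly use $\norm{u}_1^2 \le 16\,\ell^\star \norm{u}_2^2$ on $\mathcal{C}(S^\star;3)$ and therefore need $\max_{k,l}\abs{\hat G_{k,l}-G_{k,l}} \le \gamma^\star/(32\ell^\star)$ in your notation; but the stated sample size does not deliver this. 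Each variable $h_kh_l - P(h_kh_l)$ has range at most $2U_h^2$, hence is sub-Gaussian with variance factor $U_h^4$, and the union bound over the $m^2$ entries gives, with probability at least $1-\delta/4$,
\begin{equation*}
\max_{k,l}\abs{(P_n-P)(h_kh_l)} \le \sqrt{2U_h^4\log(8m^2/\delta)/n} \le \frac{\gamma^\star}{2\ell^\star}
\end{equation*}
under $n \ge 8\xi^2\log(8m^2/\delta)$ --- a factor $16$ short of your target, and that is before adding the centering correction. With only this bound, the cone inequality yields $\abs{u^T(\hat G-G)u} \le 16\ell^\star \cdot \gamma^\star/(2\ell^\star)\cdot\norm{u}_2^2 = 8\gamma^\star\norm{u}_2^2$, so the resulting lower bound on the empirical quadratic form is negative and the argument collapses; reaching $\gamma^\star/(32\ell^\star)$ would require $n \ge 2048\,\xi^2\log(8m^2/\delta)$. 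Hence your claim that the stated sample-size requirements are ``precisely what guarantee'' the needed entrywise bound is false, and along this route the proof cannot be closed under the theorem's hypotheses.

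You should know, however, that this defect is inherited from --- and in fact exposed by --- the paper's own proof. In the derivation of \eqref{eq:3_ev_lasso_2}, the paper bounds $\norm{u}_1^2\max_{k,l}\abs{\Delta_{k,l}}$ by $\ell^\star\norm{u}_2^2\max_{k,l}\abs{\Delta_{k,l}}$, i.e., it invokes $\norm{u}_1^2 \le \ell^\star\norm{u}_2^2$, which is valid for vectors supported on $S^\star$ but not for vectors of the cone $\mathcal{C}(S^\star;3)$, where the correct constant is $16\ell^\star$ --- the very constant the paper does use, correctly, in its Step~3 (whence the factor $16\times 8 = 128$ in the second sample-size condition). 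With the honest factor $16$, the constant $8$ in the theorem's hypothesis would have to grow to the order of $2048$ for either proof to go through. Your remaining deviations from the paper are immaterial: Bernstein in place of Hoeffding on the Gram entries; ascribing the $\xi$-condition to Bernstein's linear tail regime, whereas in the paper it arises from the centering correction $P_n(h)P_n(h)^T$; and writing $\epsilon^{(n)}$ where the centered residual $\epsilon_c^{(n)}$ is what enters the basic inequality.
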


\begin{remark}[LASSO vs OLS]
	\label{rem:comparison_OLS_LASSO}
The benefits of LASSO over OLS can be observed by comparing the bounds in~\eqref{ineq:aols} and~\eqref{ineq:alasso}. The total number $m$, of control functions has been replaced by the active number $\ell^\star$ of such functions. Further, because $\Gamma_{S^\star} = \{u\in \mathbb R^p \,: \, \|u\|_2 = 1, \, u \in \mathcal C (S^\star ; 3)\}$ is included in the unit sphere, $\gamma^\star = \inf_{ u\in \Gamma_{S^\star} } u^T G u$ in Assumption~\ref{ass_1_ev_cond} is at least as large as the smallest eigenvalue of $G$, $\gamma = \inf_{ \|u\|_2 = 1 } u^T G u$ in Assumption~\ref{ass_4_ev_cond_lasso}.
\end{remark}

The theoretical analysis of the LSLASSO estimator depends on the success of the LASSO-based model selection, i.e., the LASSO needs to correctly recover all the components of the true model. To ensure this selection step, the restricted eigenvalue condition is replaced by the two following ones.

\begin{assumption}[Linear independence of active functions]
	\label{ass_5_ev_cond_lasso}
	The active control variates $h_k$, $k \in S^\star$, are linearly independent. As a consequence, the $\ell^\star \times \ell^\star$ Gram matrix $G_{S^\star} = P(h_{S^\star} h_{S^\star}^T)$ is positive definite and its smallest eigenvalue $\gamma^{\star\star} := \lambda_{\min}(G_{S^\star})$ is strictly positive.
\end{assumption}

Note that because $ \{u\in \mathbb R^p \,: \, \|u\|_2 = 1, \, \forall k \notin S,\, u_k = 0\}\subset \Gamma_S $ (introduced in remark \ref{rem:comparison_OLS_LASSO}), we have that $\gamma^{\star\star}\geq \gamma^\star$. Finally, it is required that that the active control functions are orthogonal, in $L_2(P)$, to the inactive ones.

\begin{assumption}[Orthogonality]
	\label{ass_orth_control}
	We have $P(h_j h_k) = 0$ for all $j \in \{1,\ldots,m\} \setminus S^\star$ and all $k \in S^\star$.
\end{assumption}

{Since we do not know $S^\star$ in practice, the way to ensure Assumption~\ref{ass_orth_control} is by making all control variates orthogonal: $P(h_j h_k) = 0$ for all $j,k \in \{1, \ldots, m\}$. The Gram matrices $G$ and $G^\star$ are then diagonal. In the absence of zero control variates, Assumptions~\ref{ass_1_ev_cond} and~\ref{ass_4_ev_cond_lasso} are then satisfied as well, with $\gamma^{\star\star} = \min_{k \in S^\star} P(h_k^2) \ge \min_{k = 1, \ldots, m} P(h_k^2) = \gamma > 0$.}

\begin{theorem}[Support recovery of LASSO]
	\label{th:lasso_support_recovery}
	Suppose Assumptions~\ref{ass_1_sub_gauss}, \ref{ass_2_bounded_control}, \ref{ass_5_ev_cond_lasso} and~\ref{ass_orth_control} hold. Then for all $\delta \in (0, 1)$, all integer $n$ such that
	\begin{ceqn}
	\begin{align*}
	n \ge 70 (\ell^\star U_{h}^2/\gamma^{\star\star})^2 \log(10 \ell^\star m/\delta),
	\end{align*}
	\end{ceqn}
	and all $\lambda$ such that
	\begin{ceqn}
	\begin{equation}
	\label{eq:lambda:interval}
	13 U_{h} \sqrt{\log(10m/\delta)} \frac{\tau}{\sqrt{n}} \le \lambda \le \frac{\gamma^{\star\star}}{3\sqrt{\ell^\star}} \underset{k \in S^\star}{\min} |\beta_k^\star(f)|,
	\end{equation}
	\end{ceqn}
	it holds that, with probability at least $1-\delta$, the LASSO based solution $\hat{\beta}_n^{\mathrm{lasso}}(f)$ is unique and the true active set is recovered,
	$\supp(\hat \beta_n^{\mathrm{lasso}}(f)) = S^\star$.
\end{theorem}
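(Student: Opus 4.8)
The plan is to use the \emph{primal--dual witness} (PDW) construction, following the blueprint of \citep[Section~11.4.1]{tibshirani+w+h;2015}. Writing the empirically centered regression model as $f_c^{(n)} = H_c \beta^\star(f) + \epsilon_c^{(n)}$, where $\epsilon_c^{(n)}$ is the centered version of the residual vector $(\epsilon(X_i))_{i=1}^n$, the idea is to construct a candidate coefficient vector $\hat\beta$ that is forced to vanish on $\overline{S^\star}$, to solve the LASSO restricted to $S^\star$, and then to verify that the Karush--Kuhn--Tucker (KKT) conditions of the full problem~\eqref{eq:lasso} hold with a \emph{strictly} dual-feasible subgradient. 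The PDW lemma then guarantees that this candidate is the unique LASSO minimizer with $\supp(\hat\beta) \subseteq S^\star$; combining this inclusion with a lower bound on $\min_{k\in S^\star}\abs{\hat\beta_k}$ upgrades it to the exact equality $\supp(\blasson(f)) = S^\star$.

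First I would write the stationarity conditions. Denoting the empirical Gram matrix by $\hat\Sigma = n^{-1} H_c^T H_c$ and partitioning it into the blocks indexed by $S^\star$ and $\overline{S^\star}$, the restricted problem determines $\hat\beta_{S^\star}$ and a subgradient $\hat z_{S^\star} \in \partial\norm{\hat\beta_{S^\star}}_1$, after which the off-support subgradient is pinned down as
\[
\hat z_{\overline{S^\star}}
= \hat\Sigma_{\overline{S^\star}S^\star}\,\hat\Sigma_{S^\star S^\star}^{-1}\,\hat z_{S^\star}
+ \tfrac{1}{\lambda}\bigl[\tfrac1n H_{c,\overline{S^\star}}^T \epsilon_c^{(n)} - \hat\Sigma_{\overline{S^\star}S^\star}\hat\Sigma_{S^\star S^\star}^{-1}\tfrac1n H_{c,S^\star}^T \epsilon_c^{(n)}\bigr],
\]
and strict dual feasibility is the requirement $\norm{\hat z_{\overline{S^\star}}}_\infty < 1$. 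The decisive simplification is Assumption~\ref{ass_orth_control}: in the population the cross-block $G_{\overline{S^\star}S^\star}$ vanishes, so the \emph{mutual incoherence} term $\hat\Sigma_{\overline{S^\star}S^\star}\hat\Sigma_{S^\star S^\star}^{-1}$ is exactly zero on average, and only its empirical fluctuation must be controlled.

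The core of the argument is then to set up three high-probability events and intersect them. (i) A lower bound $\lambda_{\min}(\hat\Sigma_{S^\star S^\star}) \ge \gamma^{\star\star}/2$ from the matrix Chernoff inequality \citep[Theorem~5.1.1]{tropp2015introduction} applied to the sum of the bounded rank-one terms $h_{S^\star}(X_i) h_{S^\star}(X_i)^T$, whose operator norm is at most $\ell^\star U_h^2$ by Assumption~\ref{ass_2_bounded_control}, together with Assumption~\ref{ass_5_ev_cond_lasso} and a negligible correction from the empirical centering; this is where the sample-size requirement $n \ge 70(\ell^\star U_h^2/\gamma^{\star\star})^2\log(10\ell^\star m/\delta)$ is consumed. (ii) A bound on the empirical incoherence $\norm{\hat\Sigma_{\overline{S^\star}S^\star}\hat\Sigma_{S^\star S^\star}^{-1}}_\infty$, again from matrix/vector concentration combined with (i), using that its population value is $0$. (iii) A sub-Gaussian bound on the off-support noise $\norm{n^{-1} H_{c,\overline{S^\star}}^T \epsilon_c^{(n)}}_\infty$ via Assumption~\ref{ass_1_sub_gauss}, Chernoff's inequality, and a union bound over the coordinates. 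On the intersection, the lower bound $\lambda \ge 13 U_h\sqrt{\log(10m/\delta)}\,\tau/\sqrt{n}$ forces the noise contribution to be strictly smaller than the slack left by the incoherence term, so that $\norm{\hat z_{\overline{S^\star}}}_\infty < 1$ and hence, by PDW, $\blasson(f)$ is unique with support contained in $S^\star$. To rule out false exclusions I would bound the restricted error $\norm{\hat\beta_{S^\star} - \beta^\star_{S^\star}}_\infty \le \norm{\hat\beta_{S^\star} - \beta^\star_{S^\star}}_2$, which on event~(i) is of order $\lambda\sqrt{\ell^\star}/\gamma^{\star\star}$ plus the noise on $S^\star$; the upper bound $\lambda \le \gamma^{\star\star}\min_{k\in S^\star}\abs{\beta_k^\star(f)}/(3\sqrt{\ell^\star})$ renders this strictly less than $\min_{k\in S^\star}\abs{\beta_k^\star(f)}$, so every active coefficient remains nonzero and $\supp(\blasson(f)) = S^\star$.

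The hard part will be controlling the restricted-Gram eigenvalue and the empirical incoherence \emph{simultaneously} with the advertised probability $1-\delta$ and with constants matching the statement. The squared factor $(\ell^\star U_h^2/\gamma^{\star\star})^2$ in the sample-size condition is the signature of demanding that the operator-norm deviation of $\hat\Sigma_{S^\star S^\star}$ be a constant fraction of $\gamma^{\star\star}$; propagating this deviation through the matrix inverse inside the incoherence term, and then calibrating the two ends of the $\lambda$-interval (dual feasibility on one side, the beta-min condition on the other) so that they are compatible and yield the numerical constants, is the delicate bookkeeping. Assumption~\ref{ass_orth_control} is precisely what makes the scheme succeed, since without it the population incoherence need not lie below $1$ and exact support recovery could fail.
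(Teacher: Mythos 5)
Your proposal is correct and follows essentially the same route as the paper's proof: the paper's Lemmas~\ref{lemma:tib_book_deterministic_result} and~\ref{lemma:tib_book_deterministic_result_new} are precisely the primal--dual witness / strict dual feasibility argument of \citep{tibshirani+w+h;2015} (your $\hat\Sigma_{\overline{S^\star}S^\star}\hat\Sigma_{S^\star S^\star}^{-1}$ is the paper's collection of regression coefficients $\hat\theta_n^{(k)}$), and the paper's event construction matches your three ingredients — a matrix-Chernoff lower bound on the restricted empirical Gram matrix, control of the empirical incoherence using Assumption~\ref{ass_orth_control} to make its population value zero, and sub-Gaussian bounds on the noise terms — followed by the same beta-min argument to exclude false negatives. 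The only differences are bookkeeping: the paper takes the eigenvalue fraction $\alpha=\sqrt{2}/3$ rather than $1/2$ to optimize the constant $70$, and splits your three events into five elementary concentration inequalities.
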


The upper and lower bounds on $\lambda$ in \eqref{eq:lambda:interval} must not contradict each other, and this effectively implies an additional lower bound on $n$. Define 
$B^\star = \sup_{x \in \mathcal{X}} h_{S^\star}^T(x) G_{S^\star}^{-1} h_{S^\star}(x)$ and note that
\begin{ceqn}
\begin{align}
	B^\star 	\le \lambda_{\max}(G_{S^\star}^{-1}) 
	\sup_{x \in \mathcal{X}} h_{S^\star}^T(x) h_{S^\star}(x)  
	\le \ell^\star U_h^2 / \gamma^{\star\star}.
\end{align}	
\end{ceqn}

\begin{theorem}[Concentration inequality for LSLASSO]
	\label{th:lslasso}
	Suppose Assumptions~\ref{ass_1_sub_gauss}, \ref{ass_2_bounded_control}, \ref{ass_5_ev_cond_lasso} and \ref{ass_orth_control} hold. Write $\xi^\star = \ell^\star(U_h^2/\gamma^{\star\star})$. Then for all $\delta \in (0, 1)$ and all integer $N \in \{1, \ldots, n\}$ such that 

	\begin{ceqn}
	\begin{align*}
	N \ge 75 {\xi^\star}^2 \log(20 \ell^\star m/\delta),
	\end{align*}
	\end{ceqn}
	and all $\lambda$ such that 
	\begin{ceqn}
	\begin{equation*}
	13 U_{h} \sqrt{\log(20m/\delta)} \frac{ \tau }{ \sqrt{N}} \le \lambda \le \frac{\gamma^{\star\star}}{3\sqrt{\ell^\star}} \underset{k \in S^\star}{\min} |\beta_k^\star(f)|,
	\end{equation*}
	\end{ceqn}
	we have, with probability at least $1 - \delta$,
	\begin{multline}
	\label{ineq:alslasso}
	\abs{\hat \alpha_n^{\mathrm{lslasso}}(f) - P(f)}
	\le \sqrt{2 \log(16/\delta)} \frac{\tau}{\sqrt{n}} \\
	+ 58 \sqrt{B^\star \ell^\star \log(16 \ell^\star/\delta) \log(8/\delta)} \frac{\tau}{n}.
	\end{multline}
\end{theorem}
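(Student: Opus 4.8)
The plan is to reduce the LSLASSO estimator to an ordinary least squares estimator on the \emph{fixed} (deterministic) active set $S^\star$, and then to invoke the two results already established: the support recovery guarantee of Theorem~\ref{th:lasso_support_recovery} to control the selection step on the subsample of size $N$, and the OLS concentration inequality of Theorem~\ref{th:ols} to control the estimation step on the full sample of size $n$. I would split the confidence budget evenly, spending $\delta/2$ on each piece.

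First I would introduce the auxiliary estimator $\tilde\alpha_n(f)$, defined as the OLS estimate from the full sample $X_1, \ldots, X_n$ but using only the deterministic controls $\{h_k : k \in S^\star\}$. The key observation is that, since $\beta^\star(f)$ is supported on $S^\star$, the $L_2(P)$-projection of $f - P(f)$ onto $\spann\{h_k : k \in S^\star\}$ already equals its projection onto all of $\mathcal{F}_m$; hence the residual of this restricted regression is exactly $\epsilon = f - P(f) - \beta^\star(f)^T h$, and Assumption~\ref{ass_1_sub_gauss} holds for the restricted model with the \emph{same} variance factor $\tau^2$. Assumptions~\ref{ass_2_bounded_control} and~\ref{ass_1_ev_cond} hold for the restricted model with the same bound $U_h$ and with smallest Gram eigenvalue $\gamma^{\star\star} > 0$ coming from Assumption~\ref{ass_5_ev_cond_lasso}, and the leverage constant $B$ gets replaced by $B^\star$. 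Applying Theorem~\ref{th:ols} to this $\ell^\star$-dimensional regression at level $\delta/2$ would give, on an event of probability at least $1 - \delta/2$,
\begin{multline*}
\abs{\tilde\alpha_n(f) - P(f)} \le \sqrt{2\log(16/\delta)}\,\frac{\tau}{\sqrt{n}} \\
+ 58\sqrt{B^\star \ell^\star \log(16\ell^\star/\delta)\log(8/\delta)}\,\frac{\tau}{n},
\end{multline*}
since the substitutions $m \mapsto \ell^\star$, $B \mapsto B^\star$, $\delta \mapsto \delta/2$ turn $\log(8/\delta)$ into $\log(16/\delta)$ and $\log(8m/\delta)$ into $\log(16\ell^\star/\delta)$, reproducing exactly the constants in \eqref{ineq:alslasso}.

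Next I would apply Theorem~\ref{th:lasso_support_recovery} at level $\delta/2$ to the subsample of size $N$: on an event of probability at least $1 - \delta/2$ the LASSO recovers the truth, $\hat S_N = S^\star$, and on that event the LSLASSO estimator coincides with the auxiliary one, $\hat\alpha_n^{\mathrm{lslasso}}(f) = \tilde\alpha_n(f)$. A union bound then makes both events hold together with probability at least $1 - \delta$, and on their intersection the bound displayed above transfers verbatim to $\hat\alpha_n^{\mathrm{lslasso}}(f)$, which is \eqref{ineq:alslasso}. I would then check that the sample-size hypotheses line up: the stated condition $N \ge 75\,{\xi^\star}^2\log(20\ell^\star m/\delta)$ dominates the $\delta/2$-version $N \ge 70\,{\xi^\star}^2\log(20\ell^\star m/\delta)$ required for support recovery, and since $\xi^\star = \ell^\star U_h^2/\gamma^{\star\star} \ge \max(B^\star, \ell^\star)$ and $n \ge N$, the quadratic-in-$\xi^\star$ threshold on $N$ also forces the two linear thresholds $18 B^\star \log(\cdot)$ and $75 \ell^\star \log(\cdot)$ that Theorem~\ref{th:ols} imposes on $n$.

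The part that needs care---more a subtlety than a genuine obstacle---is the overlap between the selection subsample $X_1, \ldots, X_N$ and the estimation sample $X_1, \ldots, X_n$, which rules out a naive ``condition on $\hat S_N = S^\star$ and exploit independence'' argument. Comparing against the fixed-support estimator $\tilde\alpha_n(f)$ circumvents this cleanly: Theorem~\ref{th:ols} is invoked on a deterministic family of controls, so its concentration event is defined without reference to the random selection, and the two events combine by a plain union bound. The remaining work is purely bookkeeping of the logarithmic and numerical factors to confirm they match \eqref{ineq:alslasso}.
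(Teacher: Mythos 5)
Your proposal is correct and follows essentially the same route as the paper's own proof: compare the LSLASSO estimator to the OLS estimator on the fixed support $S^\star$, apply Theorem~\ref{th:lasso_support_recovery} (with $N$, $\delta/2$) and Theorem~\ref{th:ols} (with $h_{S^\star}$, $\delta/2$), and combine via a union bound, checking that the stated threshold on $N$ dominates the sample-size requirements of both theorems. Your explicit remark that the restricted regression has residual exactly $\epsilon$ (so the same $\tau$ applies) makes precise a point the paper leaves implicit, but the argument is otherwise identical.
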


The logic behind Theorem~\ref{th:lslasso} is that, by Theorem~\ref{th:lasso_support_recovery}, the active set $\hat{S}_N = \supp(\hat{\beta}_N^{\mathrm{lasso}}(f))$ identified by means of the subsample of size $N$ is equal to the true active set $S^\star = \supp(\beta^\star(f))$ with large probability. On the event that the two sets coincide, the LSLASSO estimator is then the same as the OLS estimator based on the active control variates only, and the error bound follows from Theorem~\ref{th:ols}. In practice, it turns out that LSLASSO works well even when the true active set is not identified perfectly. However, to show this formally remains an open problem.

The assumptions and concentration inequalities in our theorems feature explicit rather than generic constants. Although we have worked hard to keep these constants under control [see in particular the proof of Lemma~\ref{lem:subG} as well as Step~6(ii) in the proof of Theorem~\ref{th:lasso_support_recovery}], it is likely that, at the cost of lengthier computations, sharper constants can still be found.

\begin{remark}[Bounded control variates]\label{rk_bounded_control_variates}
In Assumption~\ref{ass_2_bounded_control}, the control variates were assumed to be bounded. Even if this assumption is valid for the two classic families in Examples~\ref{ex:fourier} and~\ref{ex:legendre} below, it might fail when control variates are produced with the Stein's method as suggested in Remark~\ref{rk:construction_variates}. The boundedness assumption is needed to keep the same variance factor $\tau^2$ in the sub-Gaussian property of both variables $\epsilon(X_1)$ and $\epsilon(X_1) h(X_1) $; see, e.g., Step~3.2 in the proof of Theorem~\ref{th:ols} or Equation~\eqref{eq:2_heps_lasso} in the proof of Theorem~\ref{th:lasso}. Avoiding this assumption is thus possible at the price of more specific assumptions on the sub-Gaussianity of $\epsilon(X_1) h(X_1)$. Note finally that (different) asymptotic results are valid for unbounded control variates \cite{portier+s:2019}.
\end{remark}

\begin{remark}[Overfitting]\label{rk_overfitting}
Theorems~\ref{th:lasso} and~\ref{th:lslasso} advocate the use of the LASSO in favor of the OLS in scenarios where $\ell^*$ is smaller than $m$ or in the presence of collinearities in the design matrix making the parameter $\gamma$ close to zero; see also Remark~\ref{rem:comparison_OLS_LASSO}. Another notable advantage of the (LS)LASSO and more generally of penalization methods, is the ability to prevent over-fitting. This occurs when the number of control variates $m$ is large compared to the Monte Carlo sample size $n$ or, more generally, when the approximation space is large compared to the sample size. While the theory developed here is unable to address such phenomena, one of the objectives of the numerical experiments conducted in the next section is to empirically demonstrate the superior performance of the LASSO-based methods even in the absence of sparsity.
\end{remark}

To illustrate the application of our results in a standard framework, we consider two classic families of control functions, the Fourier basis and the Legendre polynomials.

\begin{example}[Fourier basis]
	\label{ex:fourier}
	On $\mathcal{X}=[0,1]$ equipped with the uniform distribution $P$, let $h_j(x)$ be equal to $\sqrt{2}\cos((j+1)\pi x)$ is $j$ is odd and to $\sqrt{2}\sin(j\pi x)$ is $j$ is even. The Fourier basis is orthonormal so that the Gram matrix is the identity, $G=I_m$, and $\gamma=\gamma^\star= \gamma^{\star\star}=1$. The {cosine and sine functions} being bounded by 1, a uniform bound is $U_h = \sqrt{2}$, which implies $B \leq 2m, B^\star \leq 2\ell^\star$. Under the proper assumptions, we get {from Theorems~\ref{th:ols} and~\ref{th:lslasso}} that with probability at least $1-\delta$, since $58\sqrt{2}<83$,
\begin{multline*}
    \abs{\aols(f) - P(f)}
	\le \sqrt{2 \log(8/\delta)} \frac{\tau}{\sqrt{n}} \\
	+ 83 m\sqrt{\log(8m/\delta)\log(4/\delta)}\frac{\tau}{n}
\end{multline*}
and
\begin{multline*}
	\abs{\hat \alpha_n^{\mathrm{lslasso}}(f) - P(f)}
	\le \sqrt{2 \log(16/\delta)} \frac{\tau}{\sqrt{n}} \\ 
	+ 83 \ell^\star \sqrt{\log(16 \ell^\star/\delta) \log(8/\delta)}\frac{\tau}{n}.
\end{multline*}
\end{example}

\begin{example}[Legendre polynomials]
	\label{ex:legendre}
	Suppose that $h_j = L_j$ is the Legendre polynomial of degree $j \in \{1,\ldots,m\}$. The Legendre polynomials are orthogonal on $\mathcal{X}=[-1,1]$ with respect to the uniform distribution $P$ and satisfy $|L_j(x)|\leq 1$ for $x \in [-1,1]$ with $L_j(1)=1$ and
	\begin{ceqn}
\[
	\int_{-1}^1 L_i(x) L_j(x) \, \diff x 
	= \frac{2}{2j+1}\delta_{ij}.
\]
\end{ceqn}
The Gram matrix $G=P(hh^T)$ is diagonal with entries $1/(2j+1)$, so the minimum eigenvalue is $\gamma = 1/(2m+1)$ and a uniform bound is $U_h=1$. Consequently, $B \leq 2m+1$. Similarly, considering only active control variates, we have $U_h^\star=1$, while the smallest eigenvalue, $\gamma^{\star\star}$, of $G_{S^\star}$ satisfies $ 1/(2m+1) \le \gamma^{\star\star} \le 1/(2\ell^\star +1) $. Under suitable assumptions, we get {from Theorems~\ref{th:ols} and~\ref{th:lslasso}} that with probability at least $1-\delta$,
\begin{multline*}
    \abs{\aols(f) - P(f)}
	\le \sqrt{2 \log(8/\delta)} \frac{\tau}{\sqrt{n}} \\
	+ 58 \sqrt{(2m+1) m \log(8m/\delta) \log(4/\delta)} \frac{\tau}{n},
\end{multline*}
\begin{multline*}
	\abs{\hat \alpha_n^{\mathrm{lslasso}}(f) - P(f)}
	\le \sqrt{2 \log(16/\delta)} \frac{\tau}{\sqrt{n}} \\
	+ 58 \sqrt{(2\ell^\star+1) \ell^\star \log(16 \ell^\star/\delta) \log(8/\delta)} \frac{\tau}{n}.
\end{multline*}
Compared to the Fourier basis, the improvement of LSLASSO over the OLS estimator is not only related to the number of active varables $\ell^*$ compared to $m$ but also to the place of the active variables within the set of Legendre polynomials.
\end{example}
  
\section{Numerical illustration}
\label{sec:simu}

To compare the finite-sample performance of the various control variate methods, we consider synthetic data examples involving the standard integration problem over the unit cube $[0,1]^d$. The goal is to compute $\int_{[0,1]^{d}} f(x) \, \mathrm{d} x$. We shall consider various dimensions $d\geq 1$, different integrands $f:\mathbb [0,1]^d\to \mathbb R$, and several choices for the Monte Carlo sample size, $n$, and the number of control variates, $m$. We shall focus on difficult situations where $d$ is relatively large compared to $n$.
In Section~\ref{sec:bayes_inf}, we turn to real data examples in the context of Bayesian inference. For the sake of reproducibility, the data and Python code are available online\footnotemark\footnotetext{https://github.com/RemiLELUC/ControlVariateSelection.git}.

\paragraph{Methods in competition.}
We consider all the methods presented in Section~\ref{sec:MCcv} with two different strategies regarding the sub-sample size used to compute the active set in LSLASSO. The methods in competition are OLS, LASSO, LSLASSO (sub-sample size $N=n$)  and LSLASSOX (sub-sample size $N = \lfloor 15\sqrt{n} \rfloor$). The latter choice accelerates the computation in a substantial manner without deteriorating too much the support recovery property of the LASSO. For synthetic data, because the integration domain is the unit cube $[0,1]^d$, Quasi-Monte Carlo (QMC) methods \citep{caflisch1998monte} are suitable for comparison. We run such methods in the experiments with two classical low-discrepancy sets of particles, namely Halton and Sobol sequences.

\paragraph{On the choice of $\lambda$.}
In the LASSO-step of LSLASSO(X), the choice of the regularization parameter $\lambda$ is essential since it controls the number of active variables. It is common to tune this parameter using $K$-fold cross-validation at the price of additional computations. This method, presented in general form in Algorithm~\ref{alg:k_fold}, uses the prediction error of the underlying regression problem as a proxy to calibrate the control variates estimate. In Algorithm~\ref{alg:k_fold}, the ``data'' $X$ correspond to the matrix $H$ of observed  control variables and the ``labels'' $y$ to the vector $f^{(n)}$ of observed function values. The method is computationally expensive, partitioning the training set in several folds and solving many regression problems for every value of $\lambda$ in a given grid.

\begin{algorithm}[!h]
\caption{K-fold cross-validation}
\algsetup{linenodelimiter=.}
\begin{algorithmic}[1]
\REQUIRE data $X$, labels $y$, grid search $\lambda_{\text{grid}}$, $n$, $K$.
\setstretch{1.0}
\STATE Divide $\{1,\ldots,n\}$ into $K$ folds $F_1,\ldots,F_K$.
\STATE \textbf{For} $k=1,\ldots,K$
\STATE \quad Set training folds $F_{-k}=\{F_1,\ldots,F_{k-1},F_{k+1},\ldots,F_K\}$.
\STATE \quad \textbf{For} $\lambda \in \lambda_{\text{grid}}$
\STATE \quad \quad Compute estimate $\hat{\beta}_{\lambda}^{-k}$ on training set.
\STATE \quad \quad Compute test error $e_k(\lambda) = \sum_{i \in F_k} (y_i - x_i^T \hat{\beta}_{\lambda}^{-k})^2$.
\STATE \textbf{For} $\lambda \in \lambda_{\text{grid}}$
\STATE \quad Compute average error $CV(\lambda) = \frac{1}{n} \sum_{k=1}^K e_k(\lambda)$.
\STATE \textbf{Return} $\hat{\beta}_{\lambda^\star}$ with $\lambda^\star \in \argmin_{\lambda \in \lambda_{\text{grid}}} CV(\lambda)$.
\end{algorithmic}
\label{alg:k_fold}
\end{algorithm}

To accelerate the computations, we suggest a new method based on a dichotomic search. Motivated by Eq.~\eqref{ineq:aols} and Remark~\ref{rem:rate}, the value of $\lambda$ is tuned such that the number of selected control variates is of the order $\sqrt{n}$, which is the order obtained for $m$ when equating the two terms in \eqref{ineq:aols} with $B=m$. Specifically, we enforce the number of activated control functions to lie in the range $[c_1 \sqrt{n}, c_2 \sqrt{n}]$ for constants $0 < c_1 < c_2$ to be chosen (see below). This choice offers two advantages. On the one hand, the upper bound $c_2 \sqrt{n}$ ensures that the number of selected control variates is relatively small compared to the sample size $n$, promoting stability and fast computation in the final OLS step. On the other hand, the lower bound $c_1 \sqrt{n}$ reduces the risk of excluding relevant control variates.

The full procedure for the selection of the regularization parameter using a dichotomic search is described below in Algorithm \ref{alg:dicho}. In all experiments, we set $c_1=3$ and $c_2=12$. We initialize $\lambda = \lambda_{\infty}$ to be the smallest value of $\lambda$ for which $\blasso = 0$, that is, $\lambda_{\infty} = \max_{k=1,\ldots, m} | H_{c,k}^{(N)T} f_c^{(N)} |/N$, where $H_{c,k}^{(N)}$ stands for the $k$-th column of $H_c^{(N)}$, which is the same as the matrix $H_c$ but then based on the first $N$ Monte Carlo draws \citep[Exercise 2.1]{tibshirani+w+h;2015}. Next, we decrease the value of $\lambda$, e.g., by dividing it by two, such as to incorporate more and more control variates. If too many control functions are selected, i.e., more than $c_2 \sqrt{n}$, we increase the value of $\lambda$ again, e.g., by multiplying it by two, to finally reach the desired range for the number of active variables. In the end, this procedure ensures a straightforward computation of the LSLASSO(X) because the size of the associated linear system remains reasonable. Contrary to $K$-fold cross-validation, it is not necessary to split the data into multiple folds, leading to a reduced computation time.

\begin{algorithm}[!h]
\caption{Dichotomic Search}
\algsetup{linenodelimiter=.}
\begin{algorithmic}[1]
\REQUIRE $f_c^{(n)}$, $H_c$, $n$, $N \leq n$, $(c_1,c_2)$.
\setstretch{1.2}
\STATE Initialize $\lambda = \lambda_{\infty} $ and $\hat{\ell} = 0$.
\STATE \textbf{While} $\hat{\ell} \notin [c_1 \sqrt{n}, c_2 \sqrt{n}]$
\STATE \quad $\hat \beta_N^{\lambda}(f) 
\in \argmin_{\beta \in \reals^{m}} \frac{1}{2N} \|f_c^{(N)} - H_c^{(N)} \beta\|_{2}^{2} + \lambda\norm{\beta}_{1}$.
\STATE \quad $\hat S_N = \supp( \hat \beta_N^{\lambda}(f) )$ and $\hat{\ell} = \abs{\hat{S}_N}$.
\STATE \quad \textbf{if} $\hat{\ell} < c_1 \sqrt{n}$ \textbf{then} decrease $\lambda$.
\STATE \quad \textbf{if}  $\hat{\ell} > c_2 \sqrt{n}$ \textbf{then} increase $\lambda$.
\STATE \textbf{Return} $\hat \beta_N^{\lambda}(f)$.
\end{algorithmic}
\label{alg:dicho}
\end{algorithm}

The pseudo-code of the corresponding LSLASSO(X) method is provided in Algorithm~\ref{alg:lslasso}. The regression coefficients $\bols$ and $\blasson$ for OLS and LASSO are computed using the \textsf{Scikit-Learn} library \citep{scikit-learn}, employing coordinate descent to solve the LASSO problem. 

\begin{algorithm}[!h]
\caption{Least-Squares Lasso Monte-Carlo (LSLASSO)}
\algsetup{linenodelimiter=.}
\begin{algorithmic}[1]
\begin{spacing}{0.2}
\REQUIRE $f:\mathcal{X} \rightarrow \reals$, $h_j :\mathcal{X} \rightarrow \reals, 1\leq j \leq m$, $P$, $n$, $N \leq n$.
\end{spacing}
\setstretch{1.1}
\STATE Generate $(X_i)_{i=1,\ldots , n}$ independently according to $P$.
\STATE $f^{(n)} = (f(X_1),\ldots,f(X_n))$ and $H = \bigl( h_j(X_i) \bigr)_{i = 1,\ldots,n}^{j=1,\ldots,m}$.
\STATE $f_c^{(n)} = f^{(n)} - \1_n (\1_n^T f^{(n)})/n$ and $H_c = H - \1_n (\1_n^T H)/n$.
\STATE Solve $\hat \beta_N^{\lambda}(f)$ by cross-validation or dichotomic search.
\STATE $\hat S_N = \supp( \hat \beta_N^{\lambda}(f) )$ and $\hat{\ell} = \abs{\hat{S}_N}$.
\STATE Slice $n \times \hat{\ell}$ matrix $H^{(n)} _ {c,\hat S_N} = (H^{(n)} _ {c\, ij })_{i = 1,\ldots,n,\,j\in \hat S_N}$
\STATE $\blslasso(f) \in \argmin_{\beta \in \reals^{m}} \|f_c^{(n)}-H_{c,\hat S_N}^{(n)} \beta\|_{2}^{2}$.
\STATE MC estimate $\alslasso(f) = P_n[f - \blslasso(f)^T h]$.
\end{algorithmic}
\label{alg:lslasso}
\end{algorithm}

\paragraph{Integrands.}
We consider several integrands $f$ on $[0,1]^d$:
\begin{ceqn}
\begin{align}
\label{eq:varphi}
    \varphi(x_1,\ldots,x_d) &= 1 + \sin\left(\pi\left(\frac{2}{d} \sum_{i=1}^d x_i - 1\right)\right), 
\end{align}
\end{ceqn}
and for all $j = 1, \ldots, d$,
\begin{ceqn}
\begin{align}
\label{eq:fj}    
    f_j(x_1,\ldots,x_d) &= \prod_{i=1}^j (2/\pi)^{1/2} x_i^{-1} \mathrm{e}^{-\log(x_i)^2/2},\\
\label{eq:gj}
    g_j(x_1,\ldots,x_d) &= \prod_{i=1}^j \frac{\log(2)}{2^{x_i-1}} = \log(2)^j 2^{\sum_{i=1}^j (1-x_i)}.
\end{align}
\end{ceqn}
All these functions integrate to $1$ on $[0, 1]^d$. The functions $f_j$ and $g_j$ are built using tensor products of log-normal and exponential density functions, respectively, and depend on the first $j$ coordinates only. {This construction ensures that for small $j$, the integrands $f_j$ and $g_j$ lend themselves to Monte Carlo integration based on selected control variates. In contrast, the functions} $\varphi$, $f_d$ and $g_d$ represent more difficult situations where all the coordinates are involved and the symmetry of their role makes it harder to select some meaningful control functions. None of the integrands belongs to the linear span of the control variates constructed in the next paragraph.

\paragraph{Control variates.}
Multidimensional control functions with respect to the uniform distribution over $[0,1]^d$ are easy to construct based on univariate ones. Let $(h_{1}, \ldots, h_{k})$ be a vector of one-dimensional control functions, i.e., $\int_0^1 h_j(x) \, \mathrm{d} x = 0$ for each $j=1,\ldots, k$. Let $h_0=1$ denote the constant function equal to one. Without further information on the integrand, the usual way to construct multivariate controls is by forming tensor products of the form
$$	
	h_{\ell}(x_{1}, \ldots, x_{d}) 
	= \prod_{j=1}^{d} h_{\ell_{j}}(x_{j}) 
$$
for a multi-index $\ell = (\ell_{1}, \ldots, \ell_{d})$ in $\{0,\ldots, k\}^{d} \setminus \{(0, \ldots, 0)\}$, yielding a total number of $(k+1)^{d}-1$ control functions. 

A drawback of such a construction is that the number of control functions grows quickly with $k$. Alternative approaches yielding smaller control spaces consist of imposing $\ell_{j} = 0$ for all but a small number (one or two, say) of coordinates $j = 1,\ldots,d$ or simply picking at random a desired number, say $m$, of indices $\ell = (\ell_{1}, \ldots, \ell_{d})$. 

In this study, the set of control variates at our disposal is constructed as follows. We consider different settings of dimension $d$ with $k$ univariate control functions {in each dimension.} For $j \in \{1,\ldots,k\}$, let $h_j(x) = L_j(2x-1)$ for $x \in [0, 1]$, with $L_j$ the univariate Legendre polynomial (Legendre function of the first kind) of degree $j$; see Example~\ref{ex:legendre}. We have {$\int_0^1 h_j(x) \, \diff x = 0$} for all $j=1,\ldots,m$. Because the Legendre polynomials are orthogonal, they provide some numerical stability when inverting the Gram matrix. The multivariate control functions are sorted in ascending order according to the total degree $  \sum_{j=1}^d \ell_{j} \in \{1,\ldots, ,kd\}$ of the polynomial. In the experiments, the number of control functions $m$ is increased by progressively including all polynomials whose total degree is lower than or equal to a fixed threshold $\mathit{deg}$.

\paragraph{Settings.}
For the triple $(d, k, n)$ we consider $d \in \{3, 5, 8\}$, $k \in \{12, 10, 3\}$ and $n \in \{2\,000,\; 5\,000,\;10\,000\} $. For each choice of $(d, k)$, the number of control variates $m$ with a total degree lower than or equal to a fixed threshold $\mathit{deg}$ are given in Table \ref{tab:cv_config}. The case $d=8$ represents a difficult situation as the number of points $n$ is relatively small compared to the dimension. For instance, a grid made of only {four} points in each direction would already comprise $65\,536$ points.

\begin{table}[h]
\center
   \begin{tabular}{cc|rrrrr}
    \toprule
     \multirow{2}{*}{$d$} & \multirow{2}{*}{$k$} & \multicolumn{5}{c}{Degree threshold ($\mathit{deg}$)}  \\ 
     & & 1 & 3 & 5 & 10 & 12 \\
     \midrule
     3 & 12 & 3 & 19 & 55 & 285 & 454 \\
     5 & 10 & 5 & 55 & 251 & 3\,001 & 6\,157\\
     8 & 3 & 8 & 164 & 1\,214 & 20\,993 & 36\,813\\
     \bottomrule
    \end{tabular}
    \caption{\label{tab:cv_config}Number of control variates $m$ by degree threshold $\mathit{deg}$ in dimension $d$ constructed out of tensor products of $k$ univariate polynomials.}
\end{table}
\begin{table}[h]
\center
   \begin{tabular}{cccc}
    \toprule
     $n$ & $N$ & $\lfloor 3\sqrt{n} \rfloor$ & $\lfloor 12\sqrt{n} \rfloor$ \\
    \midrule
     \phantom{0}2\,000 & \phantom{0\,}700 & 134 & \phantom{0\,}536 \\
     \phantom{0}5\,000 & 1\,000 & 212 & \phantom{0\,}848 \\
     10\,000 & 2\,000 & 300 & 1\,200 \\
     \bottomrule
    \end{tabular}
   \caption{\label{tab:params} Sample sizes $n$ and sub-sample sizes $N$ together with the range $[c_1 \sqrt{n},c_2 \sqrt{n}]$ corresponding to the imposed number of selected control variates in LSLASSO.}
\end{table}

\paragraph{Results.}

The different Monte Carlo estimates are compared on the basis of their mean squared error (MSE). Figure~\ref{fig:toy_data} presents the boxplots obtained over $100$ replications of the values returned by each of the methods. In Tables~\ref{tab:mse_phi} to~\ref{tab:mse_g4}, we provide the ratio $MSE(\mathrm{vanilla})/MSE(\cdot)$, the MSE of the vanilla Monte Carlo estimate divided by the MSE for the current method, as a measure of statistical efficiency of the method relative to naive Monte Carlo integration. The four tables correspond to the four panels (a) to (d) in Figure~\ref{fig:toy_data}. For a given number of control variates $m$, the most efficient method is indicated in bold. For the Lasso-based methods, the results for the $\lambda$ selection based on cross-validation (Algorithm~\ref{alg:k_fold}) and dichotomic search (Algorithm~\ref{alg:dicho}) did not differ much; for the sake of brevity, the figures and the tables report the results associated to the dichotomic search.

Figures~\ref{fig:1} and \ref{fig:2} highlight the success or failure of the OLS estimator depending on the size of $m$ compared to $n$. In Figures~\ref{fig:3} and \ref{fig:4}, we consider larger values of $m$ and only compare the Lasso-based methods as it takes too much time to solve the OLS. In all our experiments, the LSLASSOX is the clear winner as it has the highest accuracy in almost all configurations. {Moreover, the LSLASSOX can be computed much faster than the LSLASSO: in our implementation, preselecting the control variates based on a smaller subsample led to a reduction op the computation time by a factor between three and twenty.} 

In Figure~\ref{fig:1}, boxplots of the values returned by each of the methods are provided for $\varphi$ in \eqref{eq:varphi} when $d=3$ and $n=10\,000$. In this situation, where $m$ is small compared to $n$, the OLS performs very well and the LSLASSO procedure selects almost all control variates so it performs as well as OLS. In Figure~\ref{fig:2},  boxplots of the values returned by each of the methods are provided for $g_3$ in \eqref{eq:gj} when $d=5$, $n=2\,000$, and $N=700$. In this case, the OLS estimator starts to break down as soon as the number, $m$, of control variates is of the same order as $n$. It is then necessary to perform some control variate selection, which is succesfully carried out by the LASSO and LSLASSO. Both of these estimators give the best results. Although the number of sample points used in the selection step of LSLASSOX has been reduced compared to the LSLASSO, the stability of the active set is barely affected. Accordingly, the error distributions for LSLASSO and LSLASSOX are quite similar.

Figures \ref{fig:3} and \ref{fig:4} reveal the benefits of selecting appropriate control variates before applying the OLS estimator. Figure~\ref{fig:3} covers the function $f_1$ in \eqref{eq:fj} when $d=5$ and $n=5\,000$, while Figure~\ref{fig:4} deals with the function $g_4$ in \eqref{eq:gj} when $d=8$ and $n=2\,000$. In the latter case, the number of control variates, $m=36\,813$, is huge compared to the sample size $n=2\,000$. However, the Lasso-based methods perform remarkably well in those settings. More precisely, in dimension $d=5$ with the function $f_1$, the mean square error of the naive Monte Carlo estimator is of the order $10^{-5}$ whereas the one of the LSLASSOX is of the order $10^{-10}$. {Similarly, in dimension $d=8$ with the function $g_4$, the mean square error goes down from $10^{-4}$ to $10^{-8}$.} Table~\ref{tab:mse_g4} highlights the benefits of the LSLASSO over the LASSO in difficult situations.  

In the recent study \cite{south+m+d+o:2018}, the authors investigate the use of \textit{regularization} in computing control variates estimates. They focus on the LASSO and ridge regression and they show, based on several examples, that the LASSO generally outperforms the ridge. In the applications they consider, they found that polynomials with relatively small degrees in each direction ($k$ equal to $2$ and $3$) give the best performance. The examples considered here show a similar pattern as the results do not generally improve beyond degree $k = 3$. 

\begin{figure*}
	\begin{subfigure}[h]{0.49\textwidth}
         \centering
         \includegraphics[width=\textwidth]{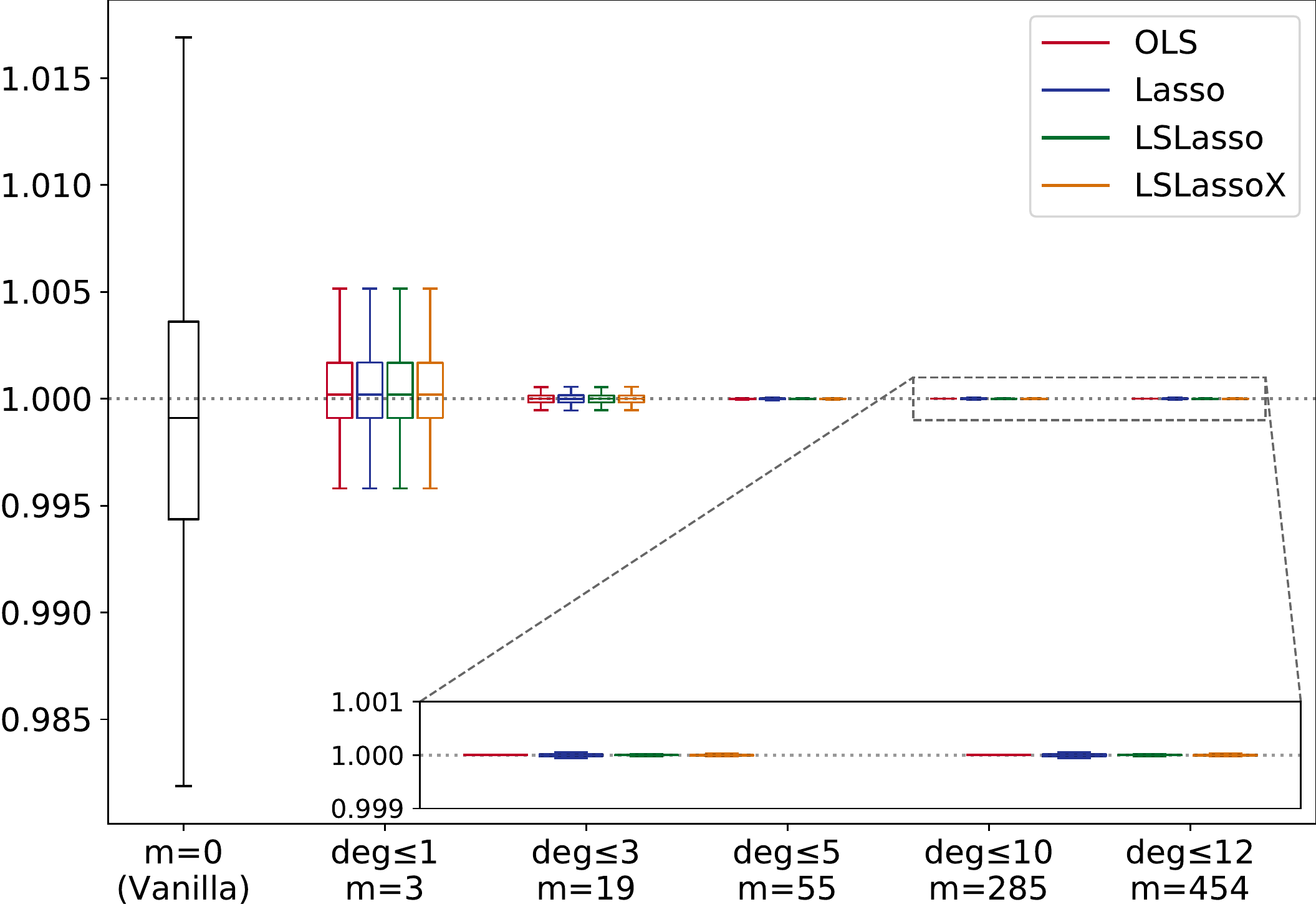}
	\caption{Boxplots for $\varphi$ in \eqref{eq:varphi} with $d=3,\ n =10\,000,\ N=2\,000$.}
	\label{fig:1}
     \end{subfigure}
	\hfill
	\begin{subfigure}[h]{0.49\textwidth}
         \centering
	\includegraphics[width=\textwidth]{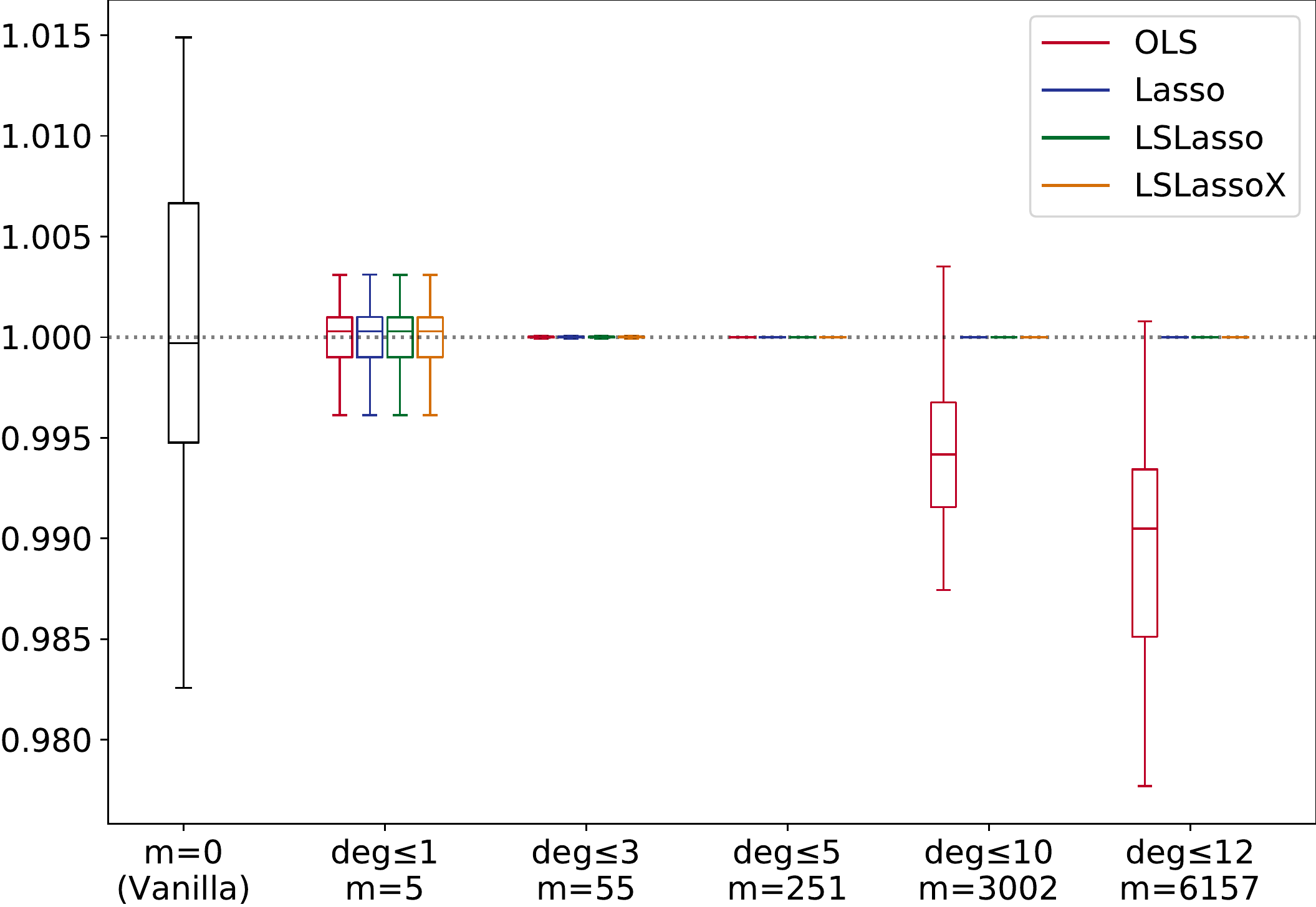}
	\caption{Boxplots for $g_3$ in \eqref{eq:gj} with $d=5,\ n =2\,000,\ N=700$.}\label{fig:2}
	\end{subfigure}

	\begin{subfigure}[h]{0.49\textwidth}
         \centering
         \includegraphics[width=\textwidth]{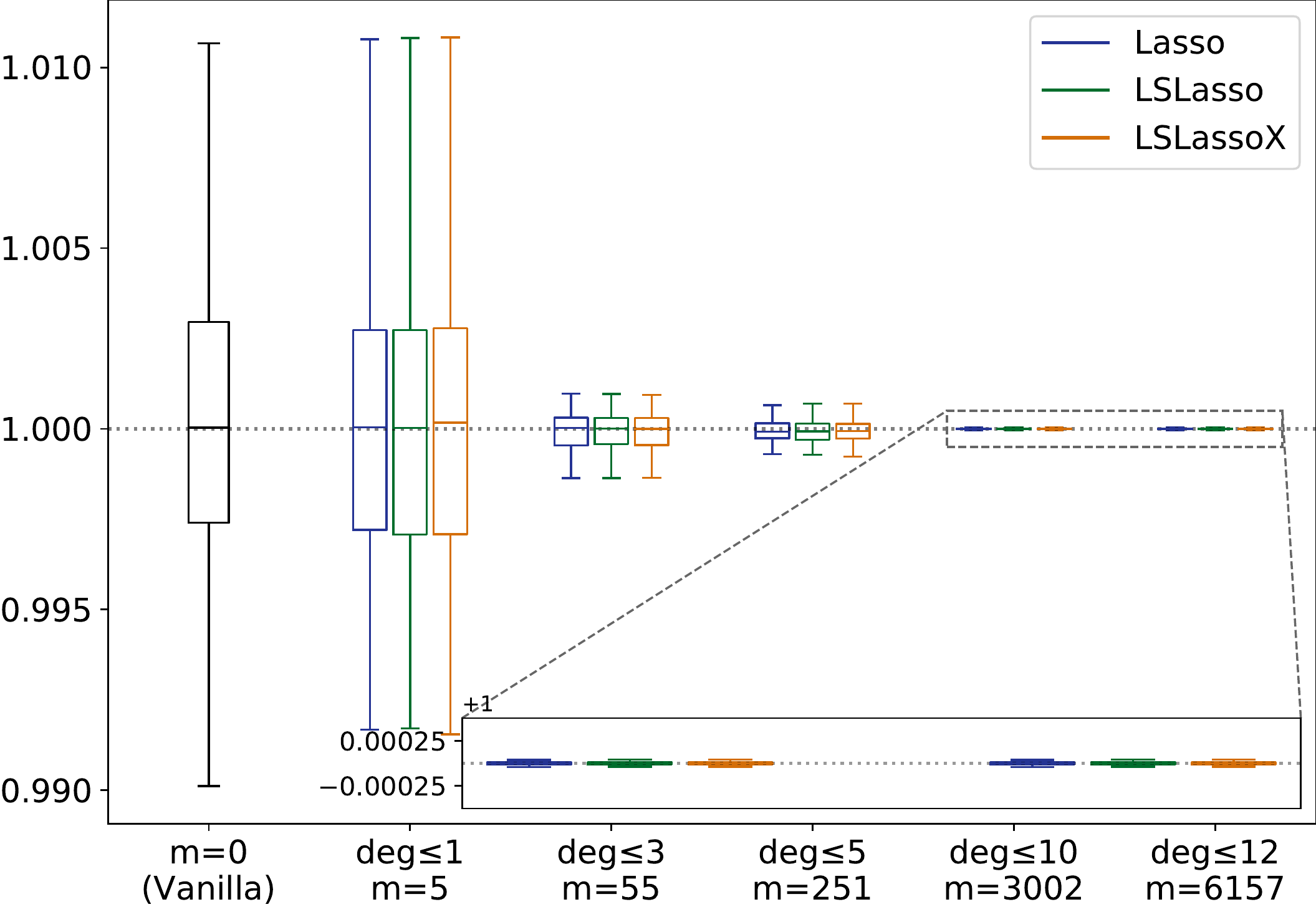}
	\caption{Boxplots for $f_1$ in \eqref{eq:fj} with $d=5,\ n =5\,000,\ N=1\,000$.}\label{fig:3}
     \end{subfigure}
	\hfill
	\begin{subfigure}[h]{0.49\textwidth}
         \centering
	\includegraphics[width=\textwidth]{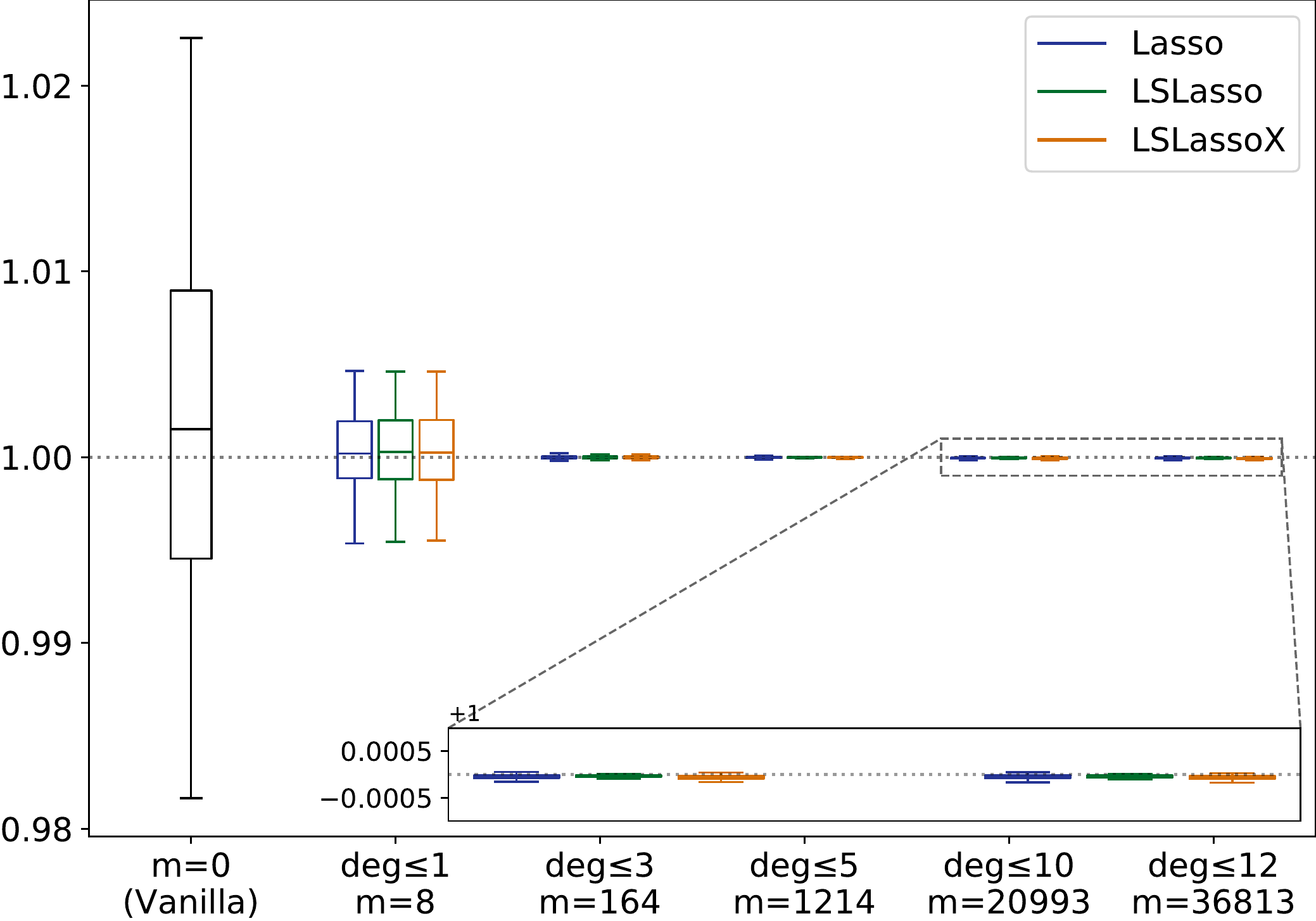}
	\caption{Boxplots for $g_4$ in \eqref{eq:gj} with $d=8,\ n =2\,000,\ N=700$.}\label{fig:4}
	\end{subfigure}
	\caption{Boxplots (based on 100 runs) of the values returned by each of the methods for functions $\varphi,g_3,f_1,g_4$ in \eqref{eq:varphi}--\eqref{eq:gj}.}
	\vspace{1mm}
	\label{fig:toy_data}
	\vspace{0.5cm}
\end{figure*}

\begin{table*}
\centering
\begin{minipage}[h]{\columnwidth}
\centering 
\resizebox{\textwidth }{!}{  
 \begin{tabular}{lccccc}
    \toprule
    \multicolumn{1}{c}{$m=$} & 3 & 19 & 55 & 285 & 454 \\
    \midrule
     OLS & 8.42e00 & 8.56e02 & \textbf{2.12e05} & 2.49e11 & \textbf{5.27e14}\\[0.5ex]
    LASSO & 8.42e00 & 8.53e02 & 6.72e04 & 7.71e04 & 7.71e04 \\[0.5ex]
    LSL & 8.42e00 & \textbf{8.58e02} & 2.10e05 & 6.26e05 & 1.37e06\\[0.5ex]
    LSLX & 8.42e00 & 8.51e02 & 2.09e05 & \textbf{2.49e11} & 2.91e05 \\[0.5ex]
    QMC & \multicolumn{5}{c}{Halton: 8.76e01 \qquad Sobol: 3.29e02} \\
     \bottomrule
    \end{tabular}}
    \vspace{-0.25cm}
   \caption{\label{tab:mse_phi} Statistical efficiency for $\varphi$; see also Figure~\ref{fig:1}.}
\end{minipage}\hfill
\begin{minipage}[h]{\columnwidth}
\centering 
\resizebox{\textwidth }{!}{  
   \begin{tabular}{lccccc}
    \toprule
     \multicolumn{1}{c}{$m=$} & 5 & 55 & 251 & 3002 & 6157 \\
    \midrule
     OLS & 2.45e01 & 5.75e04 & 7.48e08 & 1.42e00 & 4.94e-1 \\[0.5ex]
    LASSO & 2.45e01 & 5.75e04 & 4.19e06 & 4.83e05 & 4.31e05 \\[0.5ex]
    LSL & 2.45e01 & 5.75e04 & \textbf{7.79e08} & \textbf{4.83e06} & \textbf{4.54e06} \\[0.5ex]
    LSLX & 2.45e01 & 5.75e04 & 1.87e08 & 1.71e06 & 5.54e05 \\[0.5ex]
    QMC & \multicolumn{5}{c}{Halton: 3.75e00 \qquad Sobol: 1.57e01} \\
     \bottomrule
    \end{tabular}
    }
    \vspace{-0.25cm}
   \caption{\label{tab:mse_g3} Statistical efficiency for $g_3$; see also Figure~\ref{fig:2}.}
\end{minipage}

\vspace{1cm}

\begin{minipage}[t]{\columnwidth}
\centering 
\resizebox{\textwidth}{!}{  
   \begin{tabular}{lccccc}
    \toprule
    \multicolumn{1}{c}{$m=$} & 5 & 55 & 251 & 3002 & 6157 \\
    \midrule
    LASSO & 1.11e00 & \textbf{6.60e01} & \textbf{1.79e02} & 8.17e04 & 8.56e04\\[0.5ex]
    LSL & 1.11e00 & 6.59e01 & 1.76e02 & 6.77e04 & 6.83e04 \\[0.5ex]
    LSLX & 1.11e00 & 6.59e01 & 1.78e02 & \textbf{8.97e04} & \textbf{9.24e04} \\[0.5ex]
    QMC & \multicolumn{5}{c}{Halton: 4.60e00 \qquad Sobol: 7.21e01} \\
     \bottomrule
    \end{tabular}
    }
    \vspace{-0.25cm}
   \caption{\label{tab:mse_f1} Statistical efficiency for $f_1$; see also Figure~\ref{fig:3}.}
\end{minipage}\hfill
\begin{minipage}[t]{\columnwidth}
\centering 
\resizebox{\textwidth}{!}{  
   \begin{tabular}{lccccc}
    \toprule
    \multicolumn{1}{c}{$m=$} & 8 & 164 & 1214 & 20993 & 36813 \\
    \midrule
     LASSO & 1.98e01 & 1.52e04 & 7.94e05 & 7.94e04 & 6.05e04 \\[0.5ex]
    LSL & 1.97e01 & 1.53e04 & 1.32e06 & \textbf{1.49e05} & \textbf{1.28e05}\\[0.5ex]
     LSLX & 1.98e01 & \textbf{1.54e04} & \textbf{1.38e06} & 1.98e04 & 1.55e04\\[0.5ex]
    QMC & \multicolumn{5}{c}{Halton: 3.80e00 \qquad Sobol: 2.60e01} \\
     \bottomrule
    \end{tabular}
    }
    \vspace{-0.25cm}
   \caption{\label{tab:mse_g4} Statistical efficiency for $g_4$; see also Figure~\ref{fig:4}.}
\end{minipage}
\end{table*}

\section{Bayesian inference}\label{sec:bayes_inf}

In this section, we compare the different Monte Carlo estimates on Bayesian inference examples. Given some observed data $x$, the goal is to infer the parameter $\theta$ of a statistical model. We have some information through the prior distribution $\pi(\theta)$ and observe the model likelihood $\ell(x|\theta)$. Bayes' rule gives the posterior distribution as
$$
	p(\theta|x) = \frac{\ell(x|\theta)\pi(\theta)}{\int_{\Theta} \ell(x|\theta)\pi(\theta)\diff\theta} \cdot
$$
The normalizing constant in the denominator is called evidence and is of interest for Bayesian model selection:
$$
	Z = \int_{\Theta} \ell(x|\theta)\pi(\theta)\diff\theta.
$$
Typically, this integral is analytically intractable. It is also difficult to compute numerically if the dimension $d$ of the parameter space $\Theta$ is large. 

We consider the same datasets as in \citep{south+m+d+o:2018}: the European dipper capture-recapture data from \citep{marzolin1988polygynie} in Section~\ref{sec:dipper} and the sonar data from \citep{gorman1988analysis} in Section~\ref{sec:sonar}. The dimensions of the integration domains are $d = 12$ and $d = 61$, respectively. 

As in Section~\ref{sec:simu}, we consider multivariate control functions based on univariate orthogonal polynomials by forming tensor products of the form $h_{\ell}(x_{1}, \ldots, x_{d}) = \prod_{j=1}^{d} h_{\ell_{j}}(x_{j})$, for a multi-index $\ell = (\ell_{1}, \ldots, \ell_{d})$ in $\{0,\ldots, k\}^{d} \setminus \{(0, \ldots, 0)\}$. In both examples, the dimension $d$ is so large that considering all tensor products is infeasible. Instead, we focus on combinations where $\ell_j$ equals $0$ for all but one or two coordinates, leading to a total number of $m=kd$ and $m=kd + k^2 d(d-1)/2$ control variates, respectively. 

The different Monte Carlo estimates are compared on the basis of their mean squared errors (MSE). In contrast to Section~\ref{sec:simu}, the true value of the integral is unknown. An estimate of this value, referred to as the gold standard $Z^\star$, is obtained by naive Monte Carlo with sample size $n=10^8$. The variance of this estimate, computed on $20$ independent runs, is smaller than the variance of all the other considered methods. The different boxplots of Figure~\ref{fig:real_data} show the results obtained over $100$ independent runs of $\hat Z/Z^\star$ where $\hat Z$ is the estimate of the evidence. Tables~\ref{tab:mse_cap1} to \ref{tab:mse_son2} provide numerical values for the statistical efficiency $\widehat{MSE}(\mathrm{vanilla})/\widehat{MSE}(\cdot)$. We consider various settings and the parameter configuration is $n \in \{2\,000;\,5\,000\}$ for the Monte Carlo sample size with $N \in \{700;\,1\,000\}$ for the Monte Carlo subsample size for the LSLASSO. The regularization parameter $\lambda$ is chosen via dichotomic search (Algorithm~\ref{alg:dicho}).

\subsection{European dipper capture-recapture data}
\label{sec:dipper}

The data-set given in Table~\ref{tab:capture_data} was collected by \citep{marzolin1988polygynie} and describes the annual capture and recapture counts of the bird species \textit{Cinclus cinclus}, also known as the European dipper, in eastern France from 1981 to 1987. We observe count data $x_{i,j}$ with $i \in \{1, \ldots, I\}$ and $j \in \{i+1,\ldots,J\}$, where $x_{i,j}$ denotes the number of birds released in year $i$ and subsequently recaptured for the first time in year $j$. In the example, we have $I = 6$ and $J = 7$, where 1981 corresponds to year $i = 1$. Also observed is $R_i$, the number of marked birds released into the population in year $i$.

Following \citep{brooks2000bayesian,nott2018approximation,south+m+d+o:2018}, we consider a Bayesian approach for the Cormack--Jolly--Seber model \citep{lebreton1992modeling}. The model parameters are $\phi_i$, a bird's survival probability from year $i$ to $(i+1)$ for $i \in \{1, \ldots, I\}$, together with $p_j$, the probability of a bird being recaptured in year $j\in\{2,\ldots,J\}$. Let $\nu_{i,j}$ denote the probability that a bird captured and released in year $i$ gets recaptured for the first time in year $j$. Since the bird must survive from year $i$ to year $j$, not be recaptured in years $i+1$ to $j-1$ and then finally be recaptured in year $j$, the probability is modelled as
$$ 
	\nu_{i,j} = \phi_i p_j \prod_{k=i+1}^{j-1} [\phi_k(1-p_k)].
$$
The number of birds released at year $i$ that are never recaptured at all is equal to $r_i = R_i - \sum_{j=i+1}^{J} x_{i,j}$ while the probability that a bird released in year $i$ is never recaptured is $\chi_i = 1 - \sum_{j=i+1}^{J} \nu_{i,j}$. The resulting likelihood is equal to
$$
	\ell(x|\theta) = \prod_{i=1}^I \left\{ \chi_i^{r_i} \prod_{j=i+1}^{J} \nu_{i,j}^{x_{i,j}} \right\},
$$
where $\theta=(\phi_1,\ldots,\phi_6,p_2,\ldots,p_7) \in [0, 1]^{12}$.
The uniform distribution is chosen as prior and we use tensor products of Legendre polynomials with $k=10$ (Example~\ref{ex:legendre}) as controls.

\begin{table}[t]
   \begin{tabular}{cccccccc}
    \toprule
    \multirow{2}{*}{\shortstack{Release\\year}} 
     &\multirow{2}{*}{\shortstack{Birds\\released}}  & \multicolumn{6}{c}{Year of recapture: $1981+\cdots$} \\
    \cmidrule{3-8}
     &  & 1 & 2 & 3 & 4 & 5 & 6 \\
     \midrule
    1981 & 22 & 11 & 2  & 0  & 0  & 0  & 0 \\[0.5ex]
    1982 & 60 &    & 24 & 1  & 0  & 0  & 0 \\[0.5ex]
    1983 & 78 &    &    & 34 & 2  & 0  & 0 \\[0.5ex]
    1984 & 80 &    &    &    & 45 & 1  & 2 \\[0.5ex]
    1985 & 88 &    &    &    &    & 51 & 0 \\[0.5ex]
    1986 & 98 &    &    &    &    &    & 52 \\
     \bottomrule
    \end{tabular}
   \caption{\label{tab:capture_data} European dipper capture-recapture data \citep{marzolin1988polygynie}. The counts in the triangle refer to the number of birds released in a given year and recaptured for the first time in a later year.}
\end{table}

The results for the various integration methos are reported in the same way as in Section~\ref{sec:simu}. The boxplots and statistical efficiencies are given in Figures~\ref{fig:cap1} and~\ref{fig:cap2} and Tables~\ref{tab:mse_cap1} and~ \ref{tab:mse_cap2} respectively. Similarly to the synthetic data, Figures~\ref{fig:cap1} and~\ref{fig:cap2} reveal the success or failure of the OLS on the capture-recapture data when the number of control variates $m$ is larger than the Monte Carlo sample size $n$. The variance goes down as $m$ increases. Tables~\ref{tab:mse_cap1} and~\ref{tab:mse_cap2} show that for $n=2\,000$, the OLS estimate gives the best performance whereas for $n = 5\,000$, the LASSO-based methods profit from the large number of available control variates. In this case, the LASSO is most efficient while the LSLASSOX performs similarly but at a reduced computing time. 
\begin{figure*}
	\begin{subfigure}[h]{0.49\textwidth}
         \centering
         \includegraphics[width=\textwidth]{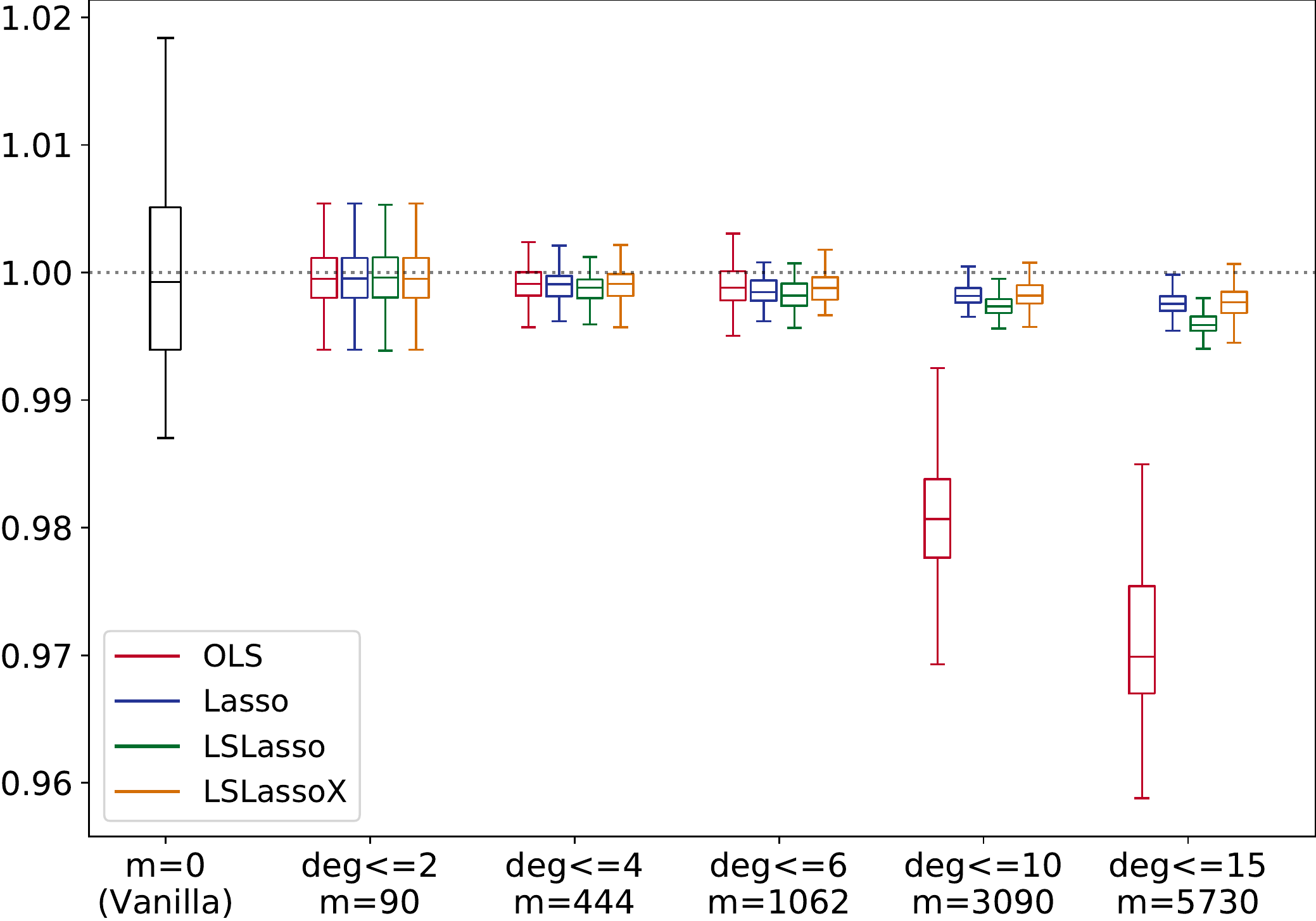}
	\caption{Boxplots for Capture dataset ($n=2000,\ N=700$)}
	\label{fig:cap1}
     \end{subfigure}
	\hfill
	\begin{subfigure}[h]{0.49\textwidth}
         \centering
	\includegraphics[width=\textwidth]{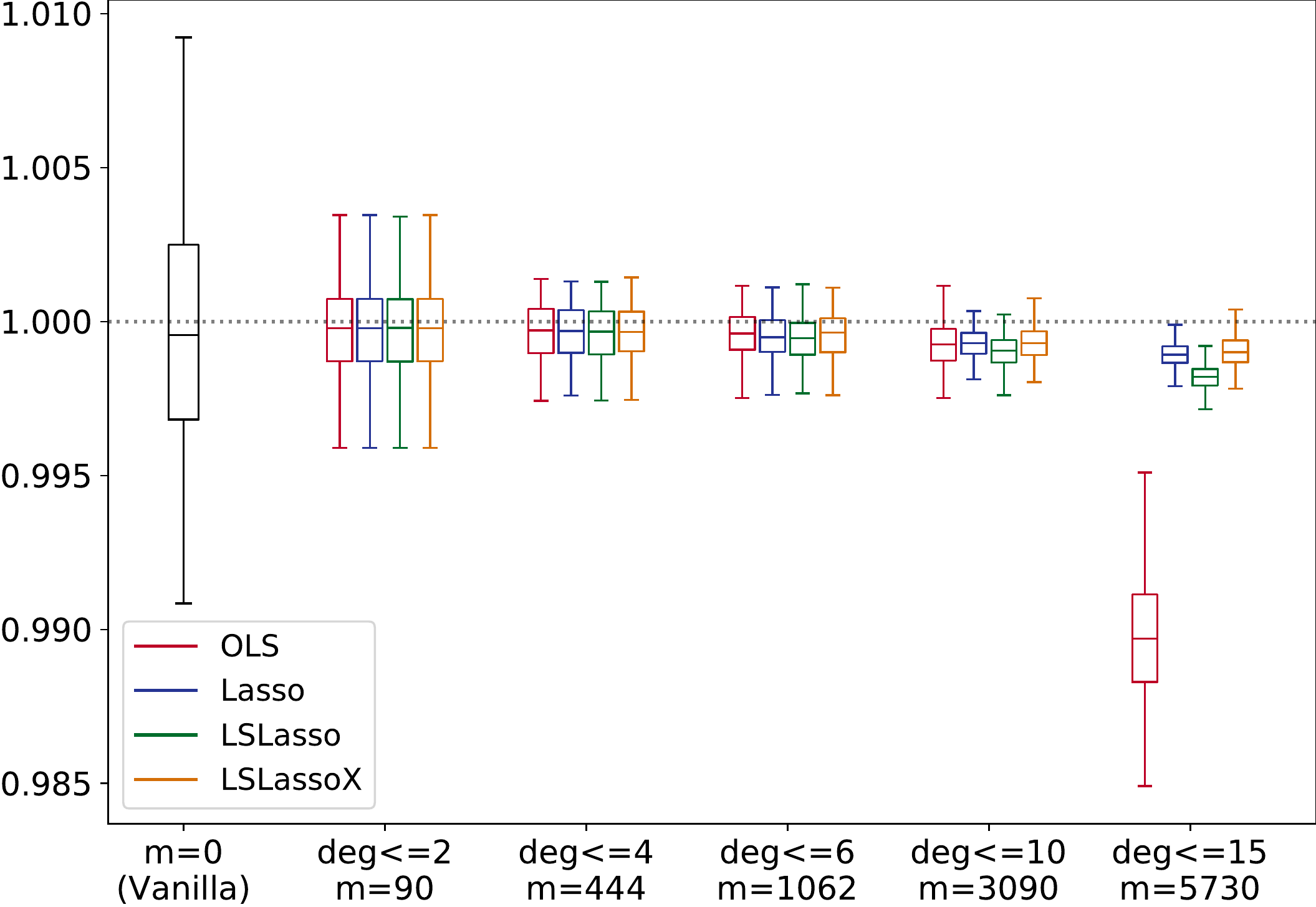}
	\caption{Boxplots for Capture dataset ($n=5000,\ N=1000$)}
	\label{fig:cap2}
	\end{subfigure}
	\begin{subfigure}[h]{0.49\textwidth}
         \centering
         \includegraphics[width=\textwidth]{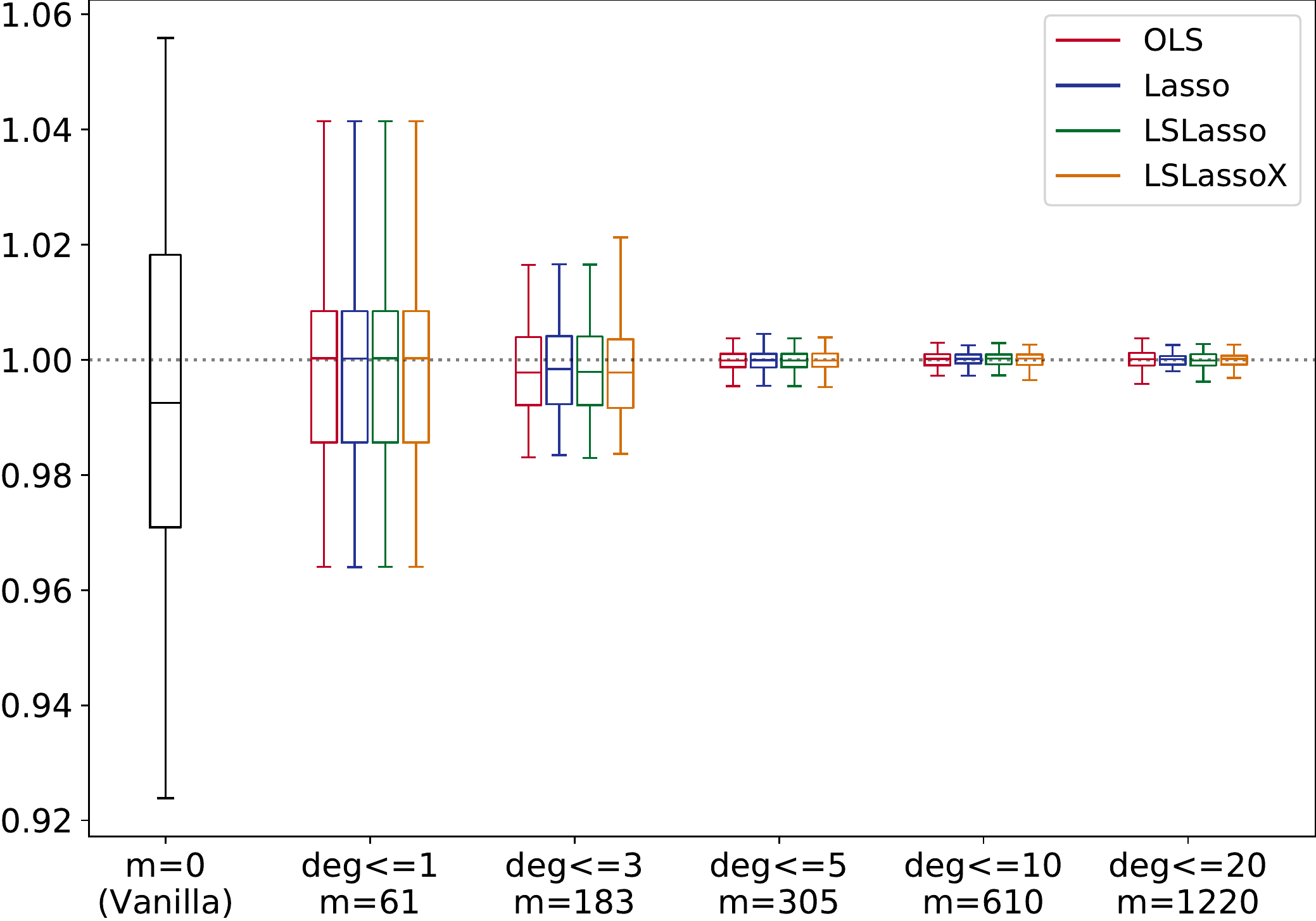}
	\caption{Boxplots for Sonar dataset ($n=2000,\ N=700$)}
	\label{fig:son1}
     \end{subfigure}
	\hfill
	\begin{subfigure}[h]{0.49\textwidth}
         \centering
	\includegraphics[width=\textwidth]{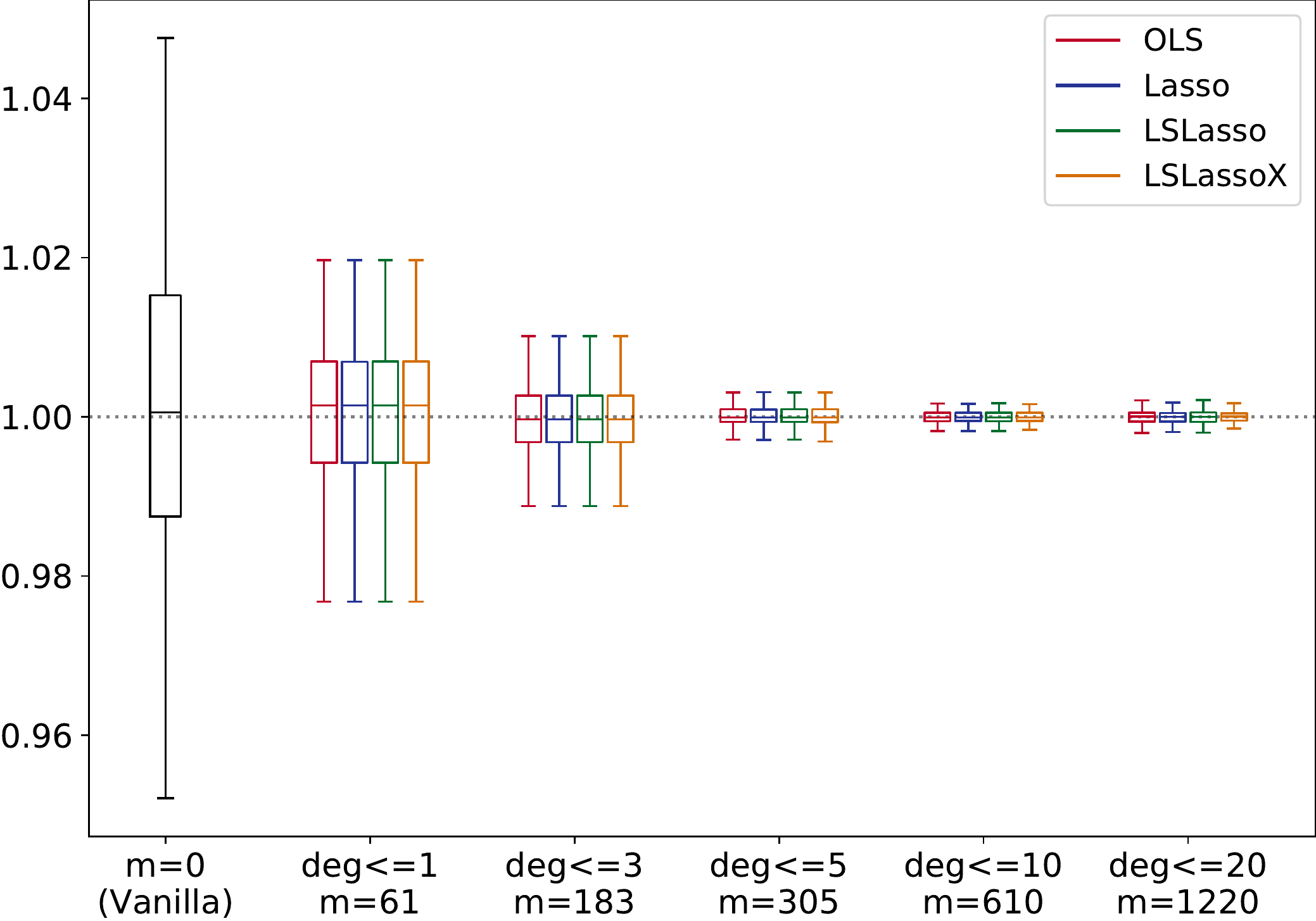}
	\caption{Boxplots for Sonar dataset ($n=5000,\ N=1000$)}
	\label{fig:son2}
	\end{subfigure}
	\caption{Boxplots (based on 100 runs) of $\hat Z/Z^\star$ returned by each of the methods for Capture-Recapture and Sonar examples.}
	\label{fig:real_data}
	\vspace{0.5cm}
\end{figure*}
\begin{table*}
\centering
\begin{minipage}[h]{\columnwidth}
\centering 
   \begin{tabular}{lccccc}
    \toprule
    \multicolumn{1}{c}{$m=$} & 90 & 444 & 1\,062 & 3\,090 & 5\,730 \\
    \midrule
     OLS & 9.33 & \textbf{20.7} & 14.7 & 0.14 & 0.06 \\[0.5ex]
    LASSO & 9.34 & 20.3 & 16.7 & \textbf{14.4} & \textbf{8.57} \\[0.5ex]
    LSL & 9.33 & 20.4 & 12.8 & 8.43 & 4.60\\[0.5ex]
    LSLX & 9.33 & 19.4 & \textbf{19.8} & 12.9 & 7.86 \\
     \bottomrule
    \end{tabular}
   \vspace{-0.25cm}
   \caption{\label{tab:mse_cap1} Capture data: statistical efficiency ($n=2000$)}
\end{minipage}\hfill
\begin{minipage}[h]{\columnwidth}
\centering 
   \begin{tabular}{lccccc}
    \toprule
    \multicolumn{1}{c}{$m=$} & 90 & 444 & 1\,062 & 3\,090 & 5\,730 \\
    \midrule
     OLS & 7.67 & 18.1 & 22.1 & 15.2 & 0.15 \\[0.5ex]
    LASSO & 7.67 & \textbf{18.4} & \textbf{22.3} & \textbf{22.8} & 12.8 \\[0.5ex]
    LSL & 7.67 & 18.0 & 21.3 & 13.3 & 5.24\\[0.5ex]
    LSLX & 7.67 & 17.8 & 21.4 & 21.6 & \textbf{13.2} \\
     \bottomrule
    \end{tabular}
   \vspace{-0.25cm}
   \caption{\label{tab:mse_cap2} Capture data: statistical efficiency ($n=5000$)}
\end{minipage}

\vspace{1cm}

\begin{minipage}[t]{\columnwidth}
\centering 
   \begin{tabular}{lccccc}
    \toprule
     \multicolumn{1}{c}{$m=$} & 61 & 183 & 305 & 610 & 1220 \\
    \midrule
     OLS & 3.39 & 13.3 & 246 & 548 & 330 \\ [0.5ex]
    LASSO & 3.39 & 13.6 & \textbf{250} & \textbf{673} & 680 \\[0.5ex]
    LSL & 3.39 & 13.3 & 246 & 564 & 499 \\[0.5ex]
    LSLX & 3.39 & \textbf{13.9} & 244 & 558 & \textbf{680} \\
     \bottomrule
    \end{tabular}
   \vspace{-0.25cm}
   \caption{\label{tab:mse_son1} Sonar data: statistical efficiency ($n=2000$)}
\end{minipage}\hfill
\begin{minipage}[t]{\columnwidth}
\centering 
   \begin{tabular}{lccccc}
    \toprule
    \multicolumn{1}{c}{$m=$} & 61 & 183 & 305 & 610 & 1220 \\
    \midrule
     OLS & 4.48 & 17.0 & 235 & 801 & 601 \\[0.5ex]
     LASSO & 4.49 & 17.0 & \textbf{240} & 821 & 721 \\[0.5ex]
    LSL & 4.48 & 17.0 & 235 & 804 & 629 \\[0.5ex]
     LSLX & 4.48 & 17.0 & 241 & \textbf{833} & \textbf{734} \\
     \bottomrule
    \end{tabular}
   \vspace{-0.25cm}
   \caption{\label{tab:mse_son2} Sonar data: statistical efficiency ($n=5000$)}
\end{minipage}
\end{table*}

\subsection{Sonar data}
\label{sec:sonar}

The data were collected by \citep{gorman1988analysis} and are available from the UCI Machine Learning Repository \citep{asuncion2007uci}. The data matrix $X$ represents $208$ sonar signals, each one composed of $60$ attributes within the binary classification framework. A column of $1$'s is added to the matrix $X$ to represent the intercept so that $X \in \reals^{208 \times 61}$. The goal is to assess whether the sonar signal bounces off a metal cylinder (label $y=1$) or a roughly cylindrical rock (label $y=-1$). The different covariates represent the energy within particular frequency bands, integrated over a certain period of time. Using the encoding $y \in \{-1,+1\}$ and following a logistic regression model, the resulting log-likelihood is
$$
	\log \ell(X,y|\theta) 
	= -\sum_{i=1}^{208} 
	\log\left\{
		1 + \exp\left( - y_i \sum_{j=1}^{61} X_{ij} \theta_j \right)
	\right\},
$$
where the model coefficient $\theta \in [-1,1]^{61}$ has a uniform prior distribution. We use the family of Legendre polynomials as control functions with $k=20$. The boxplots and statistical efficiencies are presented in Figures~\ref{fig:son1} and~\ref{fig:son2} and Tables~\ref{tab:mse_son1} and~\ref{tab:mse_son2}, respectively. Once again, the Lasso-based methods, with their selection strategy, are able to benefit from a larger control variates space. The winner of this competition is LSLASSOX as it offers the best performance combined with a smaller computation time compared to the LSLASSO.

\section{Conclusion and perspective}
\label{sec:conclusion}

The use of high-dimensional control variates with the help of a LASSO-type procedure has been shown to be efficient in order to reduce the variance of the basic Monte Carlo estimate. The method, called LSLASSO(X), that first selects appropriate control variates by the LASSO, possibly on a smaller subsample, and then estimates the control variate coefficients by least squares performs excellently considering the modest computing time required. Several avenues for further research are now discussed.

The construction of control variates by a change of measure (Remark~\ref{rk:construction_variates}) presupposes some knowledge on the underlying integration measure in order to choose an appropriate sampling distribution. For instance, if the support of the sampling measure does not cover the whole integration domain then the method will certainly fail. \textit{Adaptive importance sampling} (see, e.g., \citep{owen+z:2000,portier2018asymptotic}) offers a possible solution, involving online estimates of the appropriate sampling policy and the optimal linear combination of control variates. 

Assumption~\ref{ass_1_sub_gauss} on the sub-Gaussianity of the residuals is key to obtain concentration inequalities. For certain applications, it might by too restrictive, however. In the absence of such an assumption or more generally of suitable bounds on the tails of the residual distribution, other types of results such as almost sure convergence rates might still be pursued.

In the random design setting, the estimators of coefficient vector $\beta^\star(f)$ are all biased, even the OLS estimator. The bias may be removed by sample splitting \citep{avramidis1993}, but at the cost of an increased variance, especially if the number of control variates is large. For the Lasso-based methods, debiasing methods are studied in \citep{javanmard2018} and the references therein. The merits of these techniques for Monte Carlo control variate methods remain to be investigated.

We have presented different control variate methods from the point of view of estimation only. Equally important questions are that of model evaluation and Monte Carlo sample size calculation, assessing the accuracy of the estimate. Several ways can be imagined such as sample splitting (e.g., cross-validation) and plug-in estimation of the residual variance $\sigma^2(f)$, using for instance the estimated residuals.

\paragraph{Acknowledgments.} The authors are grateful to the Associate Editor and two anonymous Reviewers for their many valuable comments and interesting suggestions. Further, the authors gratefully acknowledge the exceptionally careful proofreading done by Aigerim Zhuman (UCLouvain).

\appendix
\normalsize

\section{Auxiliary results}
\label{app:aux}

\begin{lemma}(Sub-Gaussian)
	\label{lemma:concentration}
	Let $X_1,\ldots,X_n $ be independent and identically distributed random variables in $(\mathcal{X}, \mathcal{A})$ with distribution $P$. Let $\varphi_1,\ldots,\varphi_p $ be real-valued functions on $\mathcal{X}$ such that $P(\varphi_k) = 0$ and $\varphi_k \in \mathcal{G}(\tau^2)$ for all $k=1,\ldots,p$. Then for all $\delta > 0$, we have with probability at least $1-\delta$,
	\begin{ceqn}
	\begin{align*}
	\max_{k=1, \ldots,p}
	\left\lvert \sum_{i=1}^n \varphi_k(X_i) \right\rvert
	\leq  \sqrt{2n\tau^2\log(2p/\delta)}.
	\end{align*}
	\end{ceqn}
\end{lemma}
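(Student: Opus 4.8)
The plan is to treat each index $k$ separately, establish a two-sided sub-Gaussian tail bound for the corresponding sum, and then take a union bound over the $p$ indices. First I would fix $k \in \{1, \ldots, p\}$ and set $S_k = \sum_{i=1}^n \varphi_k(X_i)$. Since the $X_i$ are i.i.d.\ and $\varphi_k \in \cG(\tau^2)$ with $P(\varphi_k) = 0$, the variables $\varphi_k(X_1), \ldots, \varphi_k(X_n)$ are independent and centered, each in $\cG(\tau^2)$. Using the closure of the sub-Gaussian class under sums of independent variables (mentioned just after the definition in Section~\ref{sec:ineq}), I would compute, for every $\lambda \in \reals$,
\begin{equation*}
\log \expec\bigl[\exp(\lambda S_k)\bigr]
= \sum_{i=1}^n \log \expec\bigl[\exp\bigl(\lambda \varphi_k(X_i)\bigr)\bigr]
\le \sum_{i=1}^n \frac{\lambda^2 \tau^2}{2}
= \frac{n \lambda^2 \tau^2}{2},
\end{equation*}
so that $S_k \in \cG(n\tau^2)$.

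Next I would apply Chernoff's inequality to both tails. Because the sub-Gaussian bound holds for all real $\lambda$, it applies equally to $S_k$ and to $-S_k$, yielding for every $t > 0$
\begin{equation*}
\prob\bigl(\abs{S_k} \ge t\bigr)
\le \prob(S_k \ge t) + \prob(-S_k \ge t)
\le 2 \exp\!\left( - \frac{t^2}{2 n \tau^2} \right).
\end{equation*}
A union bound over $k = 1, \ldots, p$ then gives
\begin{equation*}
\prob\!\left( \max_{k=1,\ldots,p} \abs{S_k} \ge t \right)
\le 2p \exp\!\left( - \frac{t^2}{2 n \tau^2} \right).
\end{equation*}

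Finally I would invert this bound: setting the right-hand side equal to $\delta$ and solving for $t$ gives $t = \sqrt{2 n \tau^2 \log(2p/\delta)}$, which is exactly the claimed threshold. Hence with probability at least $1 - \delta$ the maximum is below this value, completing the argument. There is no genuine obstacle here: the result is a routine maximal inequality. The only points requiring a little care are ensuring the two-sided tail (which is what turns the factor $2$ in the exponent's prefactor into the $2p$ inside the logarithm after the union bound) and checking that the sub-Gaussian MGF bound is invoked for both signs of $\lambda$ so that the bound on $\prob(-S_k \ge t)$ is legitimate.
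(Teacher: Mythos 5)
Your proposal is correct and follows essentially the same route as the paper's own proof: establish that each sum $\sum_{i=1}^n \varphi_k(X_i)$ is sub-Gaussian with variance factor $n\tau^2$, apply the two-sided Chernoff bound, take a union bound over the $p$ indices, and solve $2p\exp(-t^2/(2n\tau^2)) = \delta$ for $t$. The only difference is that you spell out the MGF computation and the two-sided tail explicitly, which the paper leaves implicit.
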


\begin{proof}
	For each $k=1,\ldots,p$, the centered random variable $\sum_{i=1}^n \varphi_k(X_i)$ is sub-Gaussian with variance factor $n \tau^2$. By the union bound and by Chernoff's inequality, we have, for each $t > 0$,
	\begin{ceqn}
	\begin{align*}
	\prob\left( \max_{k=1, \ldots,p} \left| \sum_{i=1}^n \varphi_k(X_i) \right|> t\right)
	&\le \sum_{k=1}^p \prob\left( \left| \sum_{i=1}^n \varphi_k(X_i) \right|> t\right) \\
	&\le 2p \exp\left(\frac{-t^2}{2n\tau^2}\right).
	\end{align*}
	\end{ceqn}
	Set $t =  \sqrt{2n\tau^2\log(2p/\delta)}$ to find the result.
\end{proof}

\begin{lemma}(Smallest eigenvalue lower bound)
	\label{lemma:smallest_eigen_value}
	Let $X_1,\ldots,X_n$ be independent and identically  distributed random variables in $(\mathcal{X}, \mathcal{A})$ with distribution $P$. Let $g = (g_1, \ldots, g_p)^T$ in $L_2(P)^p$ be such that the $p \times p$ Gram matrix $G = P(g g^T)$ satisfies $\lambda_{\min}(G) > 0$. Define the transformation $\tilde g = G^{-1/2} g$ and put $B_{\tilde g} := \sup_{x \in \mathcal{X}} \norm{\tilde g(x)}_2^2$. Let $\delta, \eta \in (0, 1)$. For $\delta \in (0, 1)$, the empirical Gram matrix $\hat{G}_n = P_n(g g^T)$ satisfies, with probability at least $1-\delta$,
	$$
		\lambda_{\min}(\hat G_n) 
		> \left(1 - \sqrt{2 B_{\tilde{g}} n^{-1} \log(p/\delta)}\right) \lambda_{\min}(G).
	$$
\end{lemma}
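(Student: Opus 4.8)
The plan is to reduce to the isotropic case and then invoke the matrix Chernoff inequality of \citep[Theorem~5.1.1]{tropp2015introduction}. First I would introduce the whitened control functions $\tilde{g} = G^{-1/2} g$, for which $P(\tilde{g}\tilde{g}^T) = G^{-1/2} G G^{-1/2} = I_p$, so that the population Gram matrix becomes the identity. Writing $\tilde{G}_n = P_n(\tilde{g}\tilde{g}^T) = n^{-1}\sum_{i=1}^n \tilde{g}(X_i)\tilde{g}(X_i)^T$, I observe that each summand $n^{-1}\tilde{g}(X_i)\tilde{g}(X_i)^T$ is a positive semidefinite rank-one matrix whose largest eigenvalue equals $n^{-1}\norm{\tilde{g}(X_i)}_2^2 \le B_{\tilde{g}}/n$. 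These are exactly the two ingredients required by the matrix Chernoff bound: a lower bound on the smallest eigenvalue of the mean and a uniform upper bound on the summands.

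Next I would apply the matrix Chernoff inequality with $\mu_{\min} = \lambda_{\min}(I_p) = 1$ and per-summand bound $L = B_{\tilde{g}}/n$, so that $\mu_{\min}/L = n/B_{\tilde{g}}$. Combined with the elementary scalar inequality $e^{-\theta}(1-\theta)^{-(1-\theta)} \le \exp(-\theta^2/2)$, valid for $\theta \in [0,1)$, this yields
$$
\prob\bigl[\lambda_{\min}(\tilde{G}_n) \le 1-\theta\bigr] \le p\exp\bigl(-\theta^2 n/(2B_{\tilde{g}})\bigr).
$$
Choosing $\theta = \sqrt{2 B_{\tilde{g}} n^{-1} \log(p/\delta)}$ makes the right-hand side equal to $\delta$, so that with probability at least $1-\delta$ one has $\lambda_{\min}(\tilde{G}_n) > 1 - \sqrt{2 B_{\tilde{g}} n^{-1} \log(p/\delta)}$.

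It remains to transfer this bound to $\hat{G}_n = P_n(g g^T)$. Since $g = G^{1/2}\tilde{g}$, the two empirical matrices are related by the congruence $\hat{G}_n = G^{1/2} \tilde{G}_n G^{1/2}$. For any unit vector $u \in \reals^p$, setting $v = G^{1/2} u$ gives $u^T \hat{G}_n u = v^T \tilde{G}_n v \ge \lambda_{\min}(\tilde{G}_n)\norm{v}_2^2 = \lambda_{\min}(\tilde{G}_n)\, u^T G u \ge \lambda_{\min}(\tilde{G}_n)\,\lambda_{\min}(G)$, and minimizing over $u$ yields $\lambda_{\min}(\hat{G}_n) \ge \lambda_{\min}(\tilde{G}_n)\,\lambda_{\min}(G)$. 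On the event from the previous step this delivers the asserted bound.

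The main obstacle I anticipate is bookkeeping rather than conceptual: getting the normalization in the matrix Chernoff inequality exactly right, since the theorem is stated for sums of matrices whereas $\tilde{G}_n$ is an average, which is precisely what produces the factor $B_{\tilde{g}}/n$ in $L$, and verifying the scalar simplification of the Chernoff tail function so that the exponent comes out as $\theta^2 n/(2B_{\tilde{g}})$. The eigenvalue transfer in the last step is the only genuinely matrix-analytic point, and it follows cleanly from the congruence identity. Finally, when $\theta \ge 1$ the claimed inequality is trivially true because $\lambda_{\min}(\hat{G}_n) \ge 0 > (1-\theta)\lambda_{\min}(G)$, so no generality is lost in restricting the Chernoff step to $\theta \in [0,1)$.
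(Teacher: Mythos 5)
Your proposal is correct and follows essentially the same route as the paper's proof: whitening via $\tilde g = G^{-1/2} g$ to reduce to an identity Gram matrix, applying the matrix Chernoff inequality of \citep[Theorem~5.1.1]{tropp2015introduction} with $\mu_{\min}=1$ and $L = B_{\tilde g}/n$, bounding the Chernoff tail function by $\exp(-\theta^2/2)$ (an inequality you assert and the paper verifies by an elementary integral estimate), and transferring the bound back through the congruence $\hat G_n = G^{1/2} \tilde G_n G^{1/2}$. Your transfer step is stated a touch more cleanly than the paper's Rayleigh-quotient manipulation, but it is the same argument, so there is nothing substantive to add.
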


\begin{proof}
Suppose that the result is true in the special case that $G$ is the identity matrix. In case of a general Gram matrix $G$, we could then apply the result for the special case to the vector of functions $\tilde g = G^{-1/2} g$, whose Gram matrix is the identity matrix. We would get that $\lambda_{\min}( P_n(\tilde g \tilde g^T)) > 1-\eta$ with probability at least $1-\delta$. Since $P_n(\tilde g \tilde g^T) = G^{-1/2} \hat{G}_n G^{-1/2}$ and since $u^T G^{-1} u \le 1 / \lambda_{\min}(G)$ for every unit vector $u \in \reals^p$, we would have
\begin{ceqn}
	\begin{align*}
	\lambda_{\min} \left(P_n (\tilde g \tilde g^T)\right)
	&=
	\min_{u^Tu = 1} \left\{ u^T P_n(\tilde g \tilde g^T) u \right\} \\
	&=
	\min_{u^Tu = 1} \left\{
	\frac{(G^{-1/2}u)^T \hat{G}_n G^{-1/2}u}{(G^{-1/2}u)^T G^{-1/2} u} 
	u^T G^{-1} u 
	\right\} \\
	&\le \lambda_{\min}(\hat{G}_n) / \lambda_{\min}(G).
	\end{align*}
\end{ceqn}
	It would then follow that 
	$$ 
	\lambda_{\min}(\hat{G}_n) 
	\ge \lambda_{\min}(P_n (\tilde g \tilde g^T)) \, \lambda_{\min}(G)
	\ge (1-\eta) \lambda_{\min}(G),
	$$
as required. Hence we only need to show the result for $G = I$, in which case $\tilde g = g$.
	
	We apply the matrix Chernoff inequality in \citep[Theorem~5.1.1]{tropp2015introduction} to the random matrices $n^{-1} g(X_i)g(X_i)^T$. These matrices are independent and symmetric with dimension $p \times p$. Their minimum and maximum eigenvalues are between $0$ and $L = B_g/n$, with $B_g = \sup_{x \in \mathcal{X}} \lambda_{\max} (g(x) g(x)^T) = \sup_{x \in \mathcal{X}} \norm{g(x)}^2_2$. Their sum is equal to $P_n(g g^T) = \hat{G}_n$, whose expectation is $G = I$ by assumption. In the notation of the cited theorem, we have $\mu_{\min} = \lambda_{\min}(G) = 1$, and thus, by Eq.~(5.1.5) in that theorem, we have, for $\eta \in [0, 1)$,
	$$
		\prob\{\lambda_{\min}(\hat{G}_n) \le 1 - \eta\} \le p \left[\frac{\exp(-\eta)}{(1-\eta)^{1-\eta}}\right]^{n/B_g}.
	$$
	The term in square brackets is bounded above by $\exp(-\eta^2/2)$. Indeed, we have, for $\eta \in [0, 1)$,
	$$
		\frac{e^{-\eta}}{(1-\eta)^{1-\eta}} 
		= \exp \{-\eta - (1-\eta) \log(1-\eta) \}
	$$
	and
	\begin{ceqn}
	\begin{align*}
		\eta + (1-\eta) \log(1-\eta) 
		&= \eta - (1-\eta) \int_0^\eta \frac{\diff t}{1-t} \\
		&= \int_0^{\eta} \left(1 - \frac{1-\eta}{1-t}\right) \, \diff t  \\
		&= \int_0^{\eta} \frac{\eta - t}{1-t} \, \diff t \\
		&\ge \int_0^\eta (\eta - t) \, \diff t 
		= \frac{\eta^2}{2}.
	\end{align*}
	\end{ceqn}
	It follows that
	$$
		\prob\{\lambda_{\min}(\hat{G}_n) \le 1 - \eta\} 
		\le p \exp \left( - \frac{\eta^2 n}{2 B_g} \right).
	$$
		Solving $p \exp \left( - \frac{\eta^2 n}{2 B_g} \right) = \delta$ in $\eta$, we find that, with probability at least $1-\delta$,
	$$
		\lambda_{\min}(\hat{G}_n) > 1 - \sqrt{2B_g n^{-1} \log(p/\delta)}.
	$$
\end{proof}

\begin{lemma}[Upper bound of moments]
	\label{lem:94}
	Let $X$ be a random variable such that $\expec(|X|^{2p}) \le 2^{p+1} p!$ for every integer $p \ge 1$. Then
	\begin{ceqn}
	\begin{equation}
	\label{eq:52}
		\forall \lambda \in \reals, \qquad
		1 + \sum_{k=2}^\infty \frac{\lambda^k}{k!} \expec(|X|^k)
		\le \exp(9 \lambda^2 / 4),
	\end{equation}
	\end{ceqn}
	in which it is implicitly understood that the series on the left-hand side converges.
\end{lemma}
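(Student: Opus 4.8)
The plan is to reduce everything to a comparison of two even power series. Throughout write $M_k = \expec(|X|^k)$, so that $M_0 = 1$ and the hypothesis reads $M_{2p}\le 2^{p+1}p!$ for $p\ge 1$. First I would reduce to $\lambda\ge 0$: since all $M_k\ge 0$ and the right-hand side depends on $\lambda$ only through $\lambda^2$, and since for $\lambda<0$ the odd-order terms $\lambda^{2p+1}M_{2p+1}/(2p+1)!$ are non-positive, the left-hand side at $\lambda$ is at most its value at $|\lambda|$. It therefore suffices to bound the series for $\lambda\ge0$, and I would split it into even indices $k=2p$ (with $p\ge0$) and odd indices $k=2p+1$ (with $p\ge1$), noting that the $k=1$ term is absent, exactly as in the statement.

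For the even part, inserting $M_{2p}\le 2^{p+1}p!$ and using the elementary identity $2^{p+1}p!/(2p)! = 2/(2p-1)!!$ turns $\sum_{p\ge1}\lambda^{2p}M_{2p}/(2p)!$ into a clean series with coefficients $2/(2p-1)!!$. For the odd part I would first eliminate the odd moments by log-convexity of moments (Cauchy--Schwarz), $M_{2p+1}\le (M_{2p}M_{2p+2})^{1/2}$, and then fold each odd term onto its two even neighbours by a weighted AM--GM: for a weight $\alpha>0$,
\[
\lambda^{2p+1}(M_{2p}M_{2p+2})^{1/2} \le \tfrac12\bigl(\alpha\,\lambda^{2p}M_{2p} + \alpha^{-1}\lambda^{2p+2}M_{2p+2}\bigr).
\]
After this folding and after substituting $M_{2q}\le 2^{q+1}q!$ throughout, the whole left-hand side is dominated by a series $\sum_{q\ge0} c_q\,\lambda^{2q}$ in \emph{even} powers only, whose coefficients are explicit combinations of $1/(2q-1)!!$ and $1/(2q+1)!!$.

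The endgame is then a termwise comparison with $\exp(9\lambda^2/4)=\sum_{q\ge0}(9/4)^q\lambda^{2q}/q!$, that is, checking $c_q\le (9/4)^q/q!$ for every $q$. The coefficient of $\lambda^2$ equals $2+\alpha/3$ --- the $2$ coming from $M_2\le 4$ and the $\alpha/3$ from the lower fold of the $k=3$ term --- so requiring it to be at most $9/4$ forces $\alpha\le 3/4$, and $\alpha=3/4$ makes it exactly $9/4$; this is what pins down the constant. With $\alpha=3/4$ the remaining inequalities reduce to bounding terms such as $2q/(2q-1)!!$, which decay far faster than $(9/4)^q/q!$, so that only $q=2$ has a narrow margin and must be verified by hand. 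Summing the dominating series recovers $\exp(9\lambda^2/4)$ and simultaneously yields the convergence asserted in the statement.

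The hard part will be the constant-chasing in this final comparison: because the quadratic term alone already consumes $2\lambda^2$ of the budget $9\lambda^2/4$, the available slack is only $\lambda^2/4$, leaving essentially no freedom in the AM--GM weight. This is also why the sharper Cauchy--Schwarz bound on the odd moments is needed: the cruder $M_{2p+1}\le\tfrac12(M_{2p}+M_{2p+2})$, or equal AM--GM weights, already overshoots $9/4$ at the quadratic order, so the Lyapunov bound together with the weight $\alpha=3/4$ is precisely what makes the stated constant come out.
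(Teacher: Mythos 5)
Your proposal is correct and follows essentially the same route as the paper's proof: split the series into even and odd indices, fold each odd term onto its two even neighbours by a weighted two-term inequality with a free parameter, substitute the moment hypothesis, and compare coefficients termwise with $\exp(c\lambda^2)$, with the parameter pinned down by the quadratic coefficient (forcing $c = 9/4$) and the only tight remaining check at the coefficient of $\lambda^4$. The sole cosmetic differences are that the paper performs the weighted AM--GM pointwise inside the expectation (on $|\lambda X|$) rather than via Cauchy--Schwarz on the moments, and parametrizes the weights as $a_p = b/(2p+1)$ with $b = 4$ instead of your constant $\alpha = 3/4$; both choices yield the same constant and the same narrow margin at $p = 2$.
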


\begin{proof}

	Let $\lambda \in \reals$. We split the series in terms with even and odd indices $k$, leading to
	\begin{align*}
		&1 + \sum_{k=2}^\infty \frac{\lambda^k}{k!} \expec(|X|^k)
		\\
		&= 1 + \sum_{p=1}^\infty \frac{\lambda^{2p}}{(2p)!} \expec(|X|^{2p})
		+ \sum_{p=1}^\infty \frac{\lambda^{2p+1}}{(2p+1)!} \expec(|X|^{2p+1}).
	\end{align*}
	We will bound the series on the odd indices in terms of the series on the even indices.
	
	Since the geometric mean of two nonnegative numbers is bounded by their arithmetic mean, we have, for all $x \ge 0$ and all $a > 0$,
	\begin{ceqn}
	\begin{align*}
		|x| 
		= \sqrt{ \frac{1}{a} \cdot a x^2 } 
		\le \frac{1}{2} \left( \frac{1}{a} + a x^2 \right).
	\end{align*}
	\end{ceqn}
	Applying the previous inequality to $x = \lambda X$ and scalars $a_p > 0$ to be chosen later,
	\begin{align*}
		&\sum_{p=1}^\infty \frac{\lambda^{2p+1}}{(2p+1)!} \expec(|X|^{2p+1}) \\
		&\le \sum_{p=1}^\infty \frac{\lambda^{2p}}{(2p+1)!} \expec \left[ |X|^{2p} \frac{1}{2} \left(\frac{1}{a_p} + a_p (\lambda X)^2\right) \right] \\
		&= \sum_{p=1}^\infty \frac{\lambda^{2p}}{2a_p} \frac{\expec ( |X|^{2p})}{(2p+1)!} 
		+ \sum_{p=1}^\infty \frac{a_p}{2} \frac{\lambda^{2p+2}}{(2p+1)!} \expec ( |X|^{2p+2}) \\
		&= \sum_{p=1}^\infty \frac{\lambda^{2p}}{2a_p} \frac{\expec ( |X|^{2p})}{(2p+1)!} 
		+ \sum_{p=2}^\infty \frac{a_{p-1}}{2} \frac{\lambda^{2p}}{(2p-1)!} \expec ( |X|^{2p}) \\
		&= \sum_{p=1}^\infty \left( \frac{1}{2a_p(2p+1)} + p a_{p-1} \1_{\{p \ge 2\}} \right) \frac{\lambda^{2p}}{(2p)!} \expec(|X|^{2p}).
	\end{align*}
	Here, $\1$ denotes an indicator function.
	We obtain
	\begin{align*}
	& \sum_{k=2}^\infty \frac{\lambda^k}{k!} \expec(|X|^k) \\
	&\le \sum_{p=1}^\infty \left( 1 + \frac{1}{2a_p(2p+1)} + p a_{p-1} \1_{\{p \ge 2\}} \right) \frac{\lambda^{2p}}{(2p)!} \expec(|X|^{2p}).
	\end{align*}
	Define $b_p = a_p (2p+1)$ and use the hypothesis on $\expec(|X|^{2p})$ to see that, for any constants $b_p > 0$,
	\begin{align*}
	&1 + \sum_{k=2}^\infty \frac{\lambda^k}{k!} \expec(|X|^k) \\
	&\le 1 + \sum_{p=1}^\infty \left( 1 + \frac{1}{2b_p} + \frac{p}{2p-1} b_{p-1} \1_{\{p \ge 2\}} \right) \frac{\lambda^{2p}}{(2p)!} 2^{p+1} p!.
	\end{align*}
	The objective is to find a constant $c > 0$ as small as possible and such that the right-hand side is bounded by $\exp(c \lambda^2) = 1 + \sum_{p=1}^\infty c^p \lambda^{2p} / p!$. Comparing coefficients, this means that we need to determine scalars $b_p > 0$ and $c > 0$ in such a way that for all  $p = 1, 2,\ldots$
	\begin{ceqn}
	\begin{equation*}
		\left( 1 + \frac{1}{2b_p} + \frac{p}{2p-1} b_{p-1} \1_{\{p \ge 2\}} \right) \frac{2^{p+1} p!}{(2p)!} 
		\le \frac{c^p}{p!},
	\end{equation*}
	\end{ceqn}
	or, equivalently,
	\begin{ceqn}
	\begin{equation*}
	\label{eq:cpp}
		\left(2 + \frac{1}{b_p} + \frac{2p}{2p-1} b_{p-1} \1_{\{p \ge 2\}} \right)
		\prod_{j=1}^p \frac{j}{p+j} \le (c/2)^p.
	\end{equation*}
	\end{ceqn}
	The case $p = 1$ gives 
	\begin{ceqn}
	\begin{equation}
	\label{eq:cpp1}
		2 + \frac{1}{b_1} \le c,
	\end{equation}
	\end{ceqn}
	showing that, with this proof technique, we will always find $c > 2$. Setting $b_p \equiv b > 0$ for all integer $p \ge 1$ and $c = 2 + 1/b$, inequality~\eqref{eq:cpp1} is automatically satisfied, so it remains to find $b > 0$ such that forall $p = 2, 3, \ldots$
	\begin{align*}
		\left(2 + \frac{1}{b} + \frac{2p}{2p-1} b\right) \prod_{j=1}^p \frac{j}{p+j} \le (c/2)^p \quad \text{with } c = 2 + \frac{1}{b}.
	\end{align*}
	The left-hand side is decreasing in $p$ whereas the right-hand side is increasing in $p$. It is thus sufficient to have the inequality satisfied for $p = 2$, i.e.,
	\begin{ceqn}
	\begin{equation}
	\label{eq:cpp2}
		\left( 2 + \frac{1}{b} + \frac{4b}{3} \right) \frac{1}{6} 
		\le \left(1 + \frac{1}{2b}\right)^2.
	\end{equation}
	\end{ceqn}
	Equating both sides leads to a nonlinear equation in $b$ that can be solved numerically, giving the root $b \approx 4.006156$. With $b = 4$, inequality~\eqref{eq:cpp2} is satisfied, as can be checked directly ($91/72 \le 81/64$). We conclude that $c = 2 + 1/4 = 9/4$ is a valid choice.
\end{proof}

 Note that the series in \eqref{eq:52} starts at $k = 2$. If also $\expec(X) = 0$, the left-hand side in \eqref{eq:52} is an upper bound for $\expec(\exp(\lambda X))$, and we obtain the following corollary.

\begin{corollary}
	\label{cor:5}
	Let $Z$  be a centered random variable such that $$\forall p \in \mathbb{N}^\star, \quad \expec(|Z|^{2p}) \le 2^{p+1} p!$$
Then $\log \expec(\exp(\lambda Z)) \le 9 \lambda^2/4$ for all $\lambda \in \reals$, i.e., $Z \in \cG(9/2)$.
\end{corollary}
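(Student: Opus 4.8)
The plan is to deduce the corollary directly from Lemma~\ref{lem:94} by expanding the moment generating function of $Z$ as a power series and controlling it term by term. First I would write, at least formally, $\expec(\exp(\lambda Z)) = \sum_{k=0}^\infty \frac{\lambda^k}{k!} \expec(Z^k)$, and then argue that this interchange of expectation and summation is legitimate. The justification rests on Tonelli's theorem applied to the nonnegative terms $|\lambda Z|^k/k!$: one has $\expec \sum_{k \ge 0} |\lambda Z|^k/k! = \sum_{k \ge 0} \frac{|\lambda|^k}{k!} \expec(|Z|^k)$, and the right-hand side is finite because Lemma~\ref{lem:94}, applied with $X = Z$, asserts that the series $1 + \sum_{k \ge 2} \frac{\lambda^k}{k!}\expec(|Z|^k)$ converges for every real argument. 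With absolute convergence in hand, Fubini's theorem validates the termwise expansion of $\expec(\exp(\lambda Z))$.

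Next I would use the two hypotheses to prune and bound the series. Since $Z$ is centered, the $k=0$ term equals $1$ and the $k=1$ term $\lambda \expec(Z)$ vanishes, leaving $\expec(\exp(\lambda Z)) = 1 + \sum_{k \ge 2} \frac{\lambda^k}{k!} \expec(Z^k)$. For each $k \ge 2$ the elementary bound $\frac{\lambda^k}{k!}\expec(Z^k) \le \frac{|\lambda|^k}{k!}\abs{\expec(Z^k)} \le \frac{|\lambda|^k}{k!}\expec(|Z|^k)$ holds (this is where the sign issue for odd moments and negative $\lambda$ is absorbed into the absolute moments), so that
\[
	\expec(\exp(\lambda Z)) \le 1 + \sum_{k \ge 2} \frac{|\lambda|^k}{k!} \expec(|Z|^k).
\]

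Finally I would invoke Lemma~\ref{lem:94} with the nonnegative scalar $|\lambda|$ in place of $\lambda$; this is permitted because the lemma holds for all real arguments, in particular for $|\lambda| \ge 0$. It yields $1 + \sum_{k \ge 2} \frac{|\lambda|^k}{k!}\expec(|Z|^k) \le \exp(9|\lambda|^2/4) = \exp(9\lambda^2/4)$, the last equality using the symmetry of the bound in $\lambda$. Combining the two displays gives $\expec(\exp(\lambda Z)) \le \exp(9\lambda^2/4)$, hence $\log \expec(\exp(\lambda Z)) \le 9\lambda^2/4 = \lambda^2 (9/2)/2$ for every $\lambda \in \reals$, which is exactly the statement $Z \in \cG(9/2)$. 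The only genuinely delicate step is the first paragraph, namely justifying the power-series identity for the moment generating function via Tonelli and Fubini; the remainder is bookkeeping that transfers the odd-moment sign problem onto the absolute moments and then quotes Lemma~\ref{lem:94} verbatim.
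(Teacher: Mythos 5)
Your proposal is correct and follows essentially the same route as the paper, which deduces Corollary~\ref{cor:5} directly from Lemma~\ref{lem:94} via the remark preceding the corollary (centering kills the $k=1$ term, and the series in \eqref{eq:52} dominates the moment generating function). Your write-up is in fact somewhat more careful than the paper's one-line argument: you justify the termwise expansion via Tonelli--Fubini and you apply the lemma at $\abs{\lambda}$ to absorb the sign issue of the odd moments when $\lambda<0$, a point the paper glosses over.
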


\begin{lemma}
	\label{lem:subG}
	Let $(X, Y)$ be a pair of uncorrelated random variables. If $X \in \cG(\nu)$ and $|Y| \le \kappa$ for some $\nu > 0$ and $\kappa > 0$, then $XY \in \cG((9/2)\kappa^2 \nu)$.
\end{lemma}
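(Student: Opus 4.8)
The plan is to reduce the claim to the moment condition of Corollary~\ref{cor:5} by an appropriate rescaling. Set $Z = XY/(\kappa\sqrt{\nu})$; since $X$ is centered (by the definition of sub-Gaussianity) and $X, Y$ are uncorrelated, $\expec(XY) = \expec(X)\expec(Y) = 0$, so $Z$ is centered as well. If I can verify that $\expec(\abs{Z}^{2p}) \le 2^{p+1}p!$ for every integer $p \ge 1$, then Corollary~\ref{cor:5} yields $Z \in \cG(9/2)$, i.e. $\log \expec(\exp(\lambda Z)) \le (9/4)\lambda^2$ for all $\lambda \in \reals$. Writing $XY = \kappa\sqrt{\nu}\,Z$ and using that a scalar multiple $cZ$ of a sub-Gaussian variable with variance factor $s^2$ is sub-Gaussian with variance factor $c^2 s^2$ (immediate from the definition after substituting $\lambda \mapsto c\lambda$), I would obtain $XY \in \cG(\kappa^2\nu \cdot 9/2) = \cG((9/2)\kappa^2\nu)$, which is exactly the assertion. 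Note that the uncorrelatedness hypothesis enters only to guarantee that $XY$ is centered, so that the sub-Gaussian framework applies.

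It then remains to check the moment bound, and here the boundedness of $Y$ does the first bit of work: because $\abs{Y} \le \kappa$ we have $\abs{Z}^{2p} \le \abs{X}^{2p}/\nu^p$, hence $\expec(\abs{Z}^{2p}) \le \expec(\abs{X}^{2p})/\nu^p$. Thus the task reduces to controlling the even moments of the sub-Gaussian variable $X$, and I want to show $\expec(\abs{X}^{2p}) \le 2(2\nu)^p p!$; combined with the previous inequality this gives $\expec(\abs{Z}^{2p}) \le 2(2\nu)^p p!/\nu^p = 2^{p+1}p!$, precisely the hypothesis of Corollary~\ref{cor:5}.

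The main technical step, and the part I expect to require the most care, is establishing this sub-Gaussian moment bound $\expec(\abs{X}^{2p}) \le 2(2\nu)^p p!$. I would derive it from the sub-Gaussian tail estimate: by Chernoff's inequality the assumption $X \in \cG(\nu)$ gives $\prob(\abs{X} > s) \le 2\exp(-s^2/(2\nu))$ for all $s > 0$. Then, using the layer-cake identity $\expec(\abs{X}^{2p}) = \int_0^\infty 2p\,s^{2p-1}\,\prob(\abs{X} > s)\,\diff s$ together with the substitution $u = s^2/(2\nu)$, the integral becomes a Gamma integral and evaluates to $2^{p+1}p\,\nu^p\,\Gamma(p) = 2(2\nu)^p p!$, using $p\,\Gamma(p) = \Gamma(p+1) = p!$. (One could alternatively compare the power series of $\expec(\cosh(\lambda X))$ with that of $\exp(\lambda^2\nu/2)$, but the tail-bound route avoids having to justify a coefficientwise comparison of two majorizing series.) With this moment bound in hand, the two preceding paragraphs complete the argument, the only inputs being the definition of sub-Gaussianity, Chernoff's inequality, and Corollary~\ref{cor:5}.
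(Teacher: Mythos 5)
Your proposal is correct and follows essentially the same route as the paper's own proof: center $Z = XY/(\kappa\sqrt{\nu})$ via uncorrelatedness, bound $\expec(\abs{Z}^{2p})$ using $\abs{Y}\le\kappa$ together with the sub-Gaussian moment bound $\expec(\abs{X}^{2p}) \le 2^{p+1}\nu^p p!$, apply Corollary~\ref{cor:5}, and rescale. The only difference is that you derive the moment bound explicitly from the Chernoff tail via the layer-cake and Gamma integral, whereas the paper cites it from \citet[Theorem~2.1]{boucheron+l+m:2013}; your computation is a correct filling-in of that cited step.
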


\begin{proof}
	The random variable $X/\sqrt{\nu}$ is sub-Gaussian with variance factor $1$. As on page~25 in \citet{boucheron+l+m:2013}, this implies that $\prob(|X/\sqrt{\nu}| > t) \le 2 \exp(-t^2/2)$ for all $t \ge 0$ and thus $\expec[|X/\sqrt{\nu}|^{2p}] \le 2^{p+1} p!$ for all integer $p \ge 1$ (see \cite[Theorem~2.1]{boucheron+l+m:2013}).
	
	Let $Z = XY / (\sqrt{\nu} \kappa)$.	Since $X$ is centered and $X$ and $Y$ are uncorrelated, $XY$ is centred too, and therefore also $Z$. From the previous paragraph, we have $\expec(|Z|^{2p}) \le \expec(|X/\sqrt{\nu}|^{2p}) \le 2^{p+1} p!$ for all integer $p \ge 1$. Corollary~\ref{cor:5} gives for all $\lambda \in \reals$ that $\log \expec(\exp(\lambda Z)) \le 9\lambda^2/4$, from which
	\begin{ceqn}
	\begin{align*}
		\log \expec(\exp(\lambda XY))
		= \log \expec(\exp(\lambda \sqrt{\nu} \kappa Z))
		\le \frac{9}{4} \lambda^2 \nu \kappa^2. 
	\end{align*}
	\end{ceqn}
\end{proof}

\begin{lemma}[Upper bound for norm-subGaussian random vector] \label{lemma:up_bound_nsgd}
	Let $X$ be a $d$-dimensional random vector with zero-mean and such that $\prob(\norm{X}_2 \ge t) \le 2 \exp \left( -t^2/(2\sigma^2) \right)$ for all $t \ge 0$. Then the random matrix $Y$ defined by
	\begin{ceqn}
	\begin{align} \label{eq:def_Y}
	Y =\begin{bmatrix} 0 & X^T \\ X & 0 \end{bmatrix} \in \reals^{(d+1) \times (d+1)}
	\end{align}
	\end{ceqn}
	 satisfies $\expec(\exp(\theta Y)) \preceq \exp(c\theta^2\sigma^2) I$ for any $\theta \in \reals$, with $c = 9/4$, where $I$ denotes the identity matrix.
\end{lemma}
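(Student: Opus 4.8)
The plan is to work directly with the matrix exponential as a power series, taking expectations term by term so that the zero-mean hypothesis can do its work. First I would record the spectral structure of the symmetric matrix $Y$: writing $r = \norm{X}_2$ and $u = X/r$, the only nonzero eigenvalues of $Y$ are $\pm r$, with the remaining $d-1$ eigenvalues equal to $0$ (the eigenvectors being $\tfrac{1}{\sqrt 2}(\pm 1, u^T)^T$). In particular $\norm{Y} = r$, hence $\norm{Y^n} = r^n$ for every integer $n \ge 0$. This is the key quantitative fact; I would obtain it either from the eigenvectors just mentioned or from the identity $Y^3 = r^2 Y$, which follows from a one-line block computation since $Y^2 = \mathrm{diag}(r^2, XX^T)$.

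Next I would expand $\expec[\exp(\theta Y)] = I + \sum_{n\ge 1}\frac{\theta^n}{n!}\expec[Y^n]$, the interchange of expectation and summation being justified by absolute convergence, as $\sum_{n\ge 1}\frac{|\theta|^n}{n!}\expec[\norm{Y^n}] = \expec[e^{|\theta|r}]$ is finite under the sub-Gaussian tail bound. The term $n=1$ vanishes because $\expec[Y] = 0$, which is exactly where the assumption $\expec[X] = 0$ enters. For $n \ge 2$ I would bound each term in the positive semidefinite order: since $\expec[Y^n]$ is symmetric with $\norm{\expec[Y^n]} \le \expec[\norm{Y^n}] = \expec[r^n]$, we have $-\expec[r^n]\, I \preceq \expec[Y^n] \preceq \expec[r^n]\, I$, and therefore $\frac{\theta^n}{n!}\expec[Y^n] \preceq \frac{|\theta|^n}{n!}\expec[r^n]\,I$ regardless of the sign of $\theta^n$. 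Summing yields the scalar domination $\expec[\exp(\theta Y)] \preceq \bigl(1 + \sum_{n\ge 2}\frac{|\theta|^n}{n!}\expec[r^n]\bigr)\, I$.

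It then remains to bound the scalar series by $\exp(\tfrac{9}{4}\theta^2\sigma^2)$. From the tail bound $\prob(r \ge t) \le 2\exp(-t^2/(2\sigma^2))$ I would deduce, exactly as in the proof of Lemma~\ref{lem:subG}, that $\expec[(r/\sigma)^{2p}] \le 2^{p+1} p!$ for all integers $p \ge 1$. Setting $\lambda = |\theta|\sigma$ and noting that $\frac{|\theta|^n}{n!}\expec[r^n] = \frac{\lambda^n}{n!}\expec[(r/\sigma)^n]$, the scalar series is precisely the left-hand side of \eqref{eq:52} applied to the nonnegative variable $r/\sigma$. Lemma~\ref{lem:94} (equivalently Corollary~\ref{cor:5}) then gives the upper bound $\exp(9\lambda^2/4) = \exp(\tfrac{9}{4}\theta^2\sigma^2)$, which is the claim with $c = 9/4$.

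The main obstacle is a temptation to be avoided rather than a hard computation. One cannot simply use the pointwise spectral bound $\exp(\theta Y) \preceq e^{|\theta| r} I$ and then take expectations, because $\expec[e^{|\theta| r}]$ behaves like $1 + |\theta|\,\expec[r] + \Oh(\theta^2)$ near $\theta = 0$ and so cannot be dominated by $\exp(c\theta^2\sigma^2)$: the term linear in $|\theta|$ destroys the estimate. The cancellation of that linear term is only visible after taking the expectation of the series, where $\expec[Y] = 0$ removes it. Hence the order of operations --- expand, take expectations, then bound in the semidefinite order --- is essential, and everything downstream reduces to the scalar moment estimate already supplied by Lemma~\ref{lem:94}.
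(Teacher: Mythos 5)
Your proof follows essentially the same route as the paper's: the spectral observation that $Y^k \preceq \norm{X}_2^k I$, the power-series expansion of $\expec[\exp(\theta Y)]$ in which the linear term is killed by $\expec[Y]=0$, domination in the semidefinite order by the scalar moment series of $\norm{X}_2/\sigma$, and a final appeal to Lemma~\ref{lem:94} via the moment bound $\expec[(\norm{X}_2/\sigma)^{2p}] \le 2^{p+1}p!$ deduced from the tail condition. Your handling of the odd terms with $\abs{\theta}^n$ is in fact slightly more careful than the paper's term-by-term use of $\theta^k$, which for negative $\theta$ and odd $k$ does not literally give the stated semidefinite inequality; this is a cosmetic repair of the same argument, not a different one.
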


\begin{proof}
	The non-zero eigenvalues of $Y$ are $\norm{X}$ and $-\norm{X}$. The non-zero eigenvalues of $Y^k$ are thus $\norm{X}^k$ and $(-\norm{X})^k$ for integer $k \ge 1$. It follows that $Y^k \preceq \norm{X}^k I$ for all integer $k \ge 1$, and therefore also $\expec(Y^k) \preceq \expec(\norm{X}^k) I$ for all integer $k \ge 1$. Furthermore, the operator norm of $Y^k$ is bounded by $\norm{Y^k} \le \norm{X}^k$.
		
Since $\expec(Y) = 0$, we get, for any $\theta \in \reals$,
	\begin{align*}
		&\expec(\exp(\theta Y))
		= I + \sum_{k=2}^\infty \frac{\theta^k}{k!} \expec(Y^k) \\
		&\preceq \left( 1 + \sum_{k=2}^\infty \frac{\theta^k}{k!} \expec(\norm{X}^k) \right) I
		= \left( 1 + \sum_{k=2}^\infty \frac{(\theta\sigma)^k}{k!} \expec(\xi^k) \right) I,
	\end{align*}
	where $\xi = \norm{X}/\sigma$. The first series converges in operator norm since $\norm{ \expec(Y^k) } \le \expec(\norm{Y^k}) \le \expec(\norm{X}^k)$. 
	
	By assumption, $\prob(\xi > t) = \prob(\norm{X} \ge \sigma t) \le 2e^{-t^2/2}$ for all $t \ge 0$ and thus $\expec(|\xi|^{2p}) \le 2^{p+1} p!$ for all integer $p \ge 1$ 
	 But then we can apply Lemma~\ref{lem:94} with $\lambda = \theta \sigma$ and $X = \xi$, completing the proof.
\end{proof}

The following result is a special case of \citet[Corollary~7]{jin+etal:2019}. Our contribution is to make the constant $c$ in the cited result explicit. In passing, we correct an inaccuracy in the proof of \citet[Lemma~4]{jin+etal:2019}, in which it was incorrectly claimed that the odd moments of a certain random matrix Y as in our Lemma \ref{lemma:up_bound_nsgd} are all zero.

\begin{lemma}[Hoeffding inequality for norm-subGaussian random vectors]
	\label{lem:HffnsbG}
	Let the $d$-dimensional random vectors $Z_1,\ldots,Z_n$ be independent, have mean zero, and satisfy
	\begin{ceqn}
	\begin{equation}
	\label{eq:HffnsbG}
		\forall t \ge 0, \forall i = 1,\ldots,n, \quad
		\prob(\norm{Z_i}_2 \ge t) \le 2 \exp \left( - \tfrac{t^2}{2\sigma^2} \right)
	\end{equation}
	\end{ceqn}
	for some $\sigma > 0$. Then for any $\delta > 0$, with probability at least $1-\delta$, we have
	$$
		\norm{\sum_{i=1}^n Z_i}_2 \le 3 \sqrt{n \sigma^2 \log(2d/\delta)}.
	$$
\end{lemma}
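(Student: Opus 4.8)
The plan is to reduce the vector tail bound to a scalar optimization via the matrix Laplace transform method, exploiting the Hermitian dilation already introduced in Lemma~\ref{lemma:up_bound_nsgd}. First I would set $Y_i = \begin{bmatrix} 0 & Z_i^T \\ Z_i & 0 \end{bmatrix} \in \reals^{(d+1)\times(d+1)}$ for each $i$. These matrices are independent, symmetric and centered, and their sum is the dilation of $\sum_i Z_i$; since the non-zero eigenvalues of such a dilation are $\pm \norm{\sum_i Z_i}_2$, we have $\lambda_{\max}(\sum_{i=1}^n Y_i) = \norm{\sum_{i=1}^n Z_i}_2$, so it suffices to control the largest eigenvalue of the matrix sum.

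Each $Z_i$ satisfies the hypothesis of Lemma~\ref{lemma:up_bound_nsgd}, which yields the matrix moment-generating-function bound $\expec(\exp(\theta Y_i)) \preceq \exp(c\theta^2\sigma^2)\, I$ with $c = 9/4$, valid for all $\theta \in \reals$. Taking the operator-monotone logarithm gives $\log \expec(\exp(\theta Y_i)) \preceq c\theta^2\sigma^2\, I$ for each $i$. The central step is then the matrix Laplace transform method \citep{tropp2015introduction}: applying Markov's inequality to $e^{\theta \lambda_{\max}(\cdot)}$, bounding $\lambda_{\max}(e^{\theta S})$ by $\trace(e^{\theta S})$, and invoking Lieb's concavity inequality to obtain subadditivity of the matrix cumulant generating function across the independent summands, I would get, for every $\theta > 0$,
\begin{align*}
  \prob\Bigl(\lambda_{\max}\bigl(\textstyle\sum_{i} Y_i\bigr) \ge t\Bigr)
  &\le e^{-\theta t}\, \trace \exp\Bigl(\textstyle\sum_{i} \log \expec e^{\theta Y_i}\Bigr) \\
  &\le (d+1)\, \exp\bigl(-\theta t + n c \theta^2\sigma^2\bigr),
\end{align*}
where the last inequality uses $\sum_i \log \expec e^{\theta Y_i} \preceq n c\theta^2\sigma^2 I$ together with monotonicity of the trace exponential and $\trace \exp(n c\theta^2\sigma^2 I) = (d+1) e^{n c\theta^2\sigma^2}$.

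Finally I would optimize the exponent in $\theta$: the choice $\theta = t/(2 n c\sigma^2)$ minimizes $-\theta t + n c\theta^2\sigma^2$ at the value $-t^2/(4 n c\sigma^2)$, giving $\prob(\norm{\sum_i Z_i}_2 \ge t) \le (d+1)\exp(-t^2/(4 n c\sigma^2))$. With $c = 9/4$ the denominator equals $9 n\sigma^2$; setting the right-hand side equal to $\delta$ and solving for $t$ yields $t = 3\sqrt{n\sigma^2 \log((d+1)/\delta)}$, and since $d+1 \le 2d$ for $d \ge 1$ this is bounded by $3\sqrt{n\sigma^2 \log(2d/\delta)}$, which is the claim. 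The main obstacle is the central step: upgrading the per-summand MGF control of Lemma~\ref{lemma:up_bound_nsgd} to a bound on the MGF of the \emph{sum}, which is precisely where Lieb's inequality is indispensable, as the matrices $Y_i$ do not commute and the naive factorization of matrix exponentials fails.
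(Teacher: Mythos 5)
Your proof is correct, and it rests on the same two ingredients as the paper's — the Hermitian dilation and the per-summand matrix MGF bound of Lemma~\ref{lemma:up_bound_nsgd} with $c = 9/4$ — but it differs in how the concentration step for the sum is established. The paper treats that step as a black box: it invokes Lemma~6 and Corollary~7 of \citet{jin+etal:2019}, which deliver, for each $\theta > 0$ and with probability at least $1-\delta$, the bound $\norm{\sum_{i} Z_i}_2 \le c\theta n\sigma^2 + \theta^{-1}\log(2d/\delta)$, so that the paper's only remaining task is to substitute the explicit $c = 9/4$ and optimize over $\theta$. You instead re-derive that step from first principles via the matrix Laplace transform method — Markov's inequality, the trace bound on $\lambda_{\max}$, and Lieb's concavity theorem giving subadditivity of the matrix cumulant generating functions across independent summands — which is precisely the machinery hidden inside the cited result. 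Your optimization $\theta = t/(2nc\sigma^2)$ is the Chernoff-form equivalent of the paper's choice $\theta = \sqrt{\log(2d/\delta)/(cn\sigma^2)}$, and both land on the constant $2\sqrt{c} = 3$. What your route buys is self-containedness (no dependence on \citet{jin+etal:2019}, only on the Tropp-style machinery the paper already uses elsewhere) and a marginally sharper dimensional factor, $\log((d+1)/\delta)$ in place of $\log(2d/\delta)$, which you then relax via $d+1 \le 2d$; what the paper's route buys is brevity, since it only needs to track an explicit constant through an existing proof.
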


\begin{proof}
	Given Corollary~7 in \citet{jin+etal:2019}, the only thing to prove is that their constant can be set equal to $3$. Their Corollary~7 follows from their Lemma~6 
in which it is shown that when the matrix $Y$ defined in \eqref{eq:def_Y} satisfies
	$$
		\forall \theta \in \reals, \qquad
		\expec[\exp(\theta Y)] \preceq \exp(c \theta^2 \sigma^2) I,
	$$
then we have for any  $\theta>0$, with probability at least $(1-\delta)$,
\begin{ceqn}
\begin{align*}
\norm{\sum_{i=1}^n Z_i}_2 \le c \cdot \theta n \sigma^2 + \frac{1}{\theta}\log(2d/\delta).
\end{align*}	
\end{ceqn}
Taking $\theta = \sqrt{\log(2d/\delta)/(cn\sigma^2)}$ yields
\begin{ceqn}
\begin{align*}
\norm{\sum_{i=1}^n Z_i}_2 \le 2\sqrt{c} \sqrt{n \sigma^2 \log(2d/\delta)},
\end{align*}	
\end{ceqn}
and we conclude with Lemma \ref{lemma:up_bound_nsgd} ($c=9/4,2\sqrt{c}=3$).
\end{proof}

\section{Proof of Theorem \ref{th:ols}}
\label{app:th:ols}

The proof is organized as follows. We first provide an upper bound on the error (Step~1). This bound involves the norm of the error made on the rescaled coefficients and is controlled in Step~2. Then (Step~3), we construct an event that has probability at least $1-\delta$ on which we can control the terms that appear in the upper bound of Step~2. Collecting all the inequalities, we will arrive at the stated bound (Step~4).
\smallskip

\emph{Step~1.} ---
Since $f = P(f) + \beta^\star(f)^T h + \epsilon$, the oracle estimate of $P(f)$, which uses the unknown, optimal coefficient vector $\beta^\star(f)$, is
$$
\hat{\alpha}_n^{\mathrm{or}}(f)
= P_n[f - \beta^\star(f)^T h]
= P(f) + P_n(\epsilon).
$$
The difference between the OLS and oracle estimates is
$$
\hat{\alpha}_n^{\mathrm{ols}}(f) - \hat{\alpha}_n^{\mathrm{or}}(f)
= \bigl(\beta^\star(f) - \bols(f)\bigr)^T \, P_n(h).
$$
Let $G = P(h h^\top)$ be the $m \times m$ Gram matrix. By assumption, $G$ is positive definite. Write
\begin{ceqn}
\begin{align*}
\eta^\star &= G^{1/2} \beta^\star(f), & \hat{\eta} &= G^{1/2} \bols(f), & \hbar &= G^{-1/2} h(f).
\end{align*}
\end{ceqn}
The estimation error of the OLS estimator can thus be decomposed as
\begin{ceqn}
\begin{align*}
&n \bigl( \hat{\alpha}_n^{\mathrm{ols}}(f) -  P(f) \bigr) \\
&= n \bigl(\hat{\alpha}_n^{\mathrm{or}}(f) - P(f) \bigr) 
+ \bigl(\beta^\star(f) - \bols(f)\bigr)^T \, n \, P_n(h) \\
&= \sum_{i=1}^n \epsilon(X_i) + 
\bigl(\beta ^*(f) - \bols(f)\bigr)^T \sum_{i=1}^n h(X_i) \\
&= \sum_{i=1}^n \epsilon(X_i) +
(\eta^\star - \hat{\eta})^\top \sum_{i=1}^n \hbar(X_i).
\end{align*}
\end{ceqn}
By the triangle and Cauchy--Schwarz inequalities,
\begin{ceqn}
\begin{equation}
\label{eq:alpha:ols:bound:eta}
n \abs{ \hat \alpha_n^{\mathrm{ols}} (f) -  P(f) }
\le \abs{ \sum_{i=1}^n \epsilon(X_i) }
+ \norm{\eta^\star - \hat{\eta}}_2 \norm{ \sum_{i=1}^n \hbar(X_i) }_2.
\end{equation}
\end{ceqn}

\emph{Step~2.} --- 
We will show that, if $\lambda_{\min}(P_n(\hbar \hbar^\top)) > \norm{P_n(\hbar)}_2^2$, then
\begin{ceqn}
\begin{equation}
\label{ineq:etaols}
	\norm{\hat{\eta} - \eta^\star}_2
	\le
	\frac%
	{\norm{P_n(\hbar \epsilon)}_2 + \norm{P_n(\hbar)}_2 \abs{P_n(\epsilon)}}%
	{\lambda_{\min}(P_n(\hbar \hbar^\top)) - \norm{P_n(\hbar)}_2^2}.
\end{equation}
\end{ceqn}
and thus, by \eqref{eq:alpha:ols:bound:eta},
\begin{multline}
	\label{eq:alpha:ols:bound:pieces}
	\abs{ \hat \alpha_n^{\mathrm{ols}} (f) -  P(f) } \\
	\le \abs{P_n(\epsilon)} + 
	\frac%
	{\norm{P_n(\hbar \epsilon)}_2 + \norm{P_n(\hbar)}_2 \abs{P_n(\epsilon)}}%
	{\lambda_{\min}(P_n(\hbar \hbar^\top)) - \norm{P_n(\hbar)}_2^2}
	\norm{P_n(\hbar)}_2 
\end{multline}
\smallskip

\emph{Step~2.1} ---
Considered the column-centered $n \times m$ design matrices
\begin{ceqn}
\begin{align*}
H_c &= H - \1_n P_n(h)^\top 
= \bigl(h_j(X_i) - P_n(h_j)\bigr)_{i,j}, \\
\Hbar_c = H_c G^{-1/2} 
&= \Hbar - \1_n P_n(\hbar)^\top = \bigl(\hbar_j(X_i) - P_n(\hbar_j)\bigr)_{i,j}.
\end{align*}
\end{ceqn}
Since $\Hbar^\top \1_n = n P_n(\hbar)$, we have
\begin{ceqn}
\begin{align*}
\Hbar_c^\top \Hbar_c
&= \Hbar^\top \Hbar - n P_n(\hbar) P_n(\hbar)^\top \\
&= n \left( P_n(\hbar \hbar^\top) - P_n(\hbar) P_n(\hbar)^\top\right).
\end{align*}
\end{ceqn}
As a consequence, for $u \in \reals^m$,
\begin{ceqn}
\begin{align*}
u^\top \Hbar_c^\top \Hbar_c u
&= n \left( u^\top P_n(\hbar \hbar^\top) u - (P_n(\hbar)^\top u)^2 \right) \\
&\ge n \left( \lambda_{\min}(P_n(\hbar \hbar^\top)) - \norm{P_n(\hbar)}_2^2 \right) \norm{u}_2^2. 
\end{align*}
\end{ceqn}
by the Cauchy--Schwarz inequality. In particular, $u^\top \Hbar_c^\top \Hbar_c u$ is non-zero for non-zero $u \in \reals^m$, so that $\Hbar_c^\top \Hbar_c$ is invertible, and so is the matrix 
$$ 
H_c^\top H_c = G^{1/2} \Hbar_c \Hbar_c G^{1/2}. 
$$
Also, the smallest eigenvalue of $\Hbar_c^\top \Hbar_c$ is bounded from below by
$$
\lambda_{\min}(\Hbar_c^\top \Hbar_c)
\ge n \left( \lambda_{\min}(P_n(\hbar \hbar^\top)) - \norm{P_n(\hbar)}_2^2 \right) > 0.
$$
The largest eigenvalue of the inverse matrix $(\Hbar_c^\top \Hbar_c)^{-1}$ is then bounded from above by
\begin{ceqn}
\begin{equation}
\label{eq:lambdamaxHbc}
\lambda_{\max}\bigl((\Hbar_c^\top \Hbar_c)^{-1}\bigr)
\le \frac{1}{n \left( \lambda_{\min}(P_n(\hbar \hbar^\top)) - \norm{P_n(\hbar)}_2^2 \right)}.
\end{equation}
\end{ceqn}
\smallskip

\emph{Step~2.2.} ---
Write $\epsilon_c^{(n)} = (\epsilon(X_i) - P_n(\epsilon))_{i=1}^n$ for the centered vector of error terms. Recall $f_c^{(n)} = (f(X_i) - P_n(f))_{i=1}^n$, the centered vector of samples from the integrand. As $f = P(f) + h^\top \beta^*(f) + \epsilon$, we have
$$
f_c^{(n)} = H_c \beta^\star(f) + \epsilon_c^{(n)}.
$$
From the characterization \eqref{eq:ols:c} of the OLS estimate of the coefficient vector and since $H_c^\top H_c$ is invertible,
\begin{ceqn}
\begin{align*}
\bols(f) 
&= (H_c^\top H_c)^{-1} H_c^\top f_c^{(n)} \\
&= (H_c^\top H_c)^{-1} H_c^\top \left( H_c \beta^\star(f) + \epsilon_c^{(n)} \right) \\
&= \beta^\star(f) + (H_c^\top H_c)^{-1} H_c^\top \epsilon_c^{(n)}.
\end{align*}
\end{ceqn}
We obtain
\begin{ceqn}
\begin{align}
\nonumber
\hat{\eta} - \eta^\star 
&= G^{1/2} \left(\bols(f) - \beta^\star(f)\right) \\
\nonumber
&= G^{1/2} (H_c^\top H_c)^{-1} H_c^\top \epsilon_c^{(n)} \\
\label{eq:etaepsilon}
&= (\Hbar_c^\top \Hbar_c)^{-1} \Hbar_c^\top \epsilon_c^{(n)}.
\end{align}
\end{ceqn}
\smallskip

\emph{Step~2.3.} ---
We combine the results from Steps~2.1 and~2.2. From the upper bound \eqref{eq:lambdamaxHbc} and the identity~\eqref{eq:etaepsilon}, we obtain
\begin{ceqn}
\begin{equation*}
\norm{\hat{\eta} - \eta^\star}_2
\le
\frac{\norm{\Hbar_c^\top \epsilon_c^{(n)}}_2}%
{n \left( \lambda_{\min}(P_n(\hbar \hbar^\top)) - \norm{P_n(\hbar)}_2^2 \right)}
\end{equation*}
\end{ceqn}
Finally, as $\Hbar_c = (\hbar_j(X_i) - P_n(\hbar_j))_{i,j}$, we find
\begin{ceqn}
\begin{align*}
n^{-1} \norm{\Hbar_c^\top \epsilon_c^{(n)}}_2
&= n^{-1} \norm{\sum_{i=1}^n \hbar(X_i) \epsilon(X_i) - P_n(\hbar) \sum_{i=1}^n \epsilon(X_i)}_2 \\
&= \norm{P_n(\hbar \epsilon) - P_n(\hbar) P_n(\epsilon)}_2 \\
&\le \norm{P_n(\hbar \epsilon)}_2 + \norm{P_n(\hbar)}_2 \abs{P_n(\epsilon)}.
\end{align*}
\end{ceqn}
Equation~\eqref{ineq:etaols} follows.
\smallskip

\emph{Step 3. ---}
In view of \eqref{eq:alpha:ols:bound:pieces}, we need to ensure that $\abs{P_n(\epsilon)}$, $\norm{P_n(\hbar)}$ and $\norm{P_n(\hbar \epsilon)}_2$ are small and that $\lambda_{\min}(P_n(\hbar \hbar^\top))$ is large. Let $\delta>0$. We construct an event with probability at least $1-\delta$ on which four inequalities hold simultaneously. Recall $B = \sup_{x \in \mathcal{X}} \norm{\hbar(x)}_2^2$, defined in \eqref{eq:B}.
\smallskip

\emph{Step 3.1. ---}
Because $\epsilon \in \subG(\tau^2)$, Chernoff's inequality (or Lemma~\ref{lemma:concentration} with $p = 1$) implies that with probability at least $1 - \delta/4$,
\begin{ceqn}
\begin{equation}
\label{eq:3_eps}
	\abs{\sum_{i=1}^n \epsilon (X_i)}
	\leq \sqrt{2n \tau^2\log(8/\delta)}.
\end{equation}
\end{ceqn}
\smallskip

\emph{Step 3.2. ---}
For the term $\norm{\sum_{i=1}^n \hbar(X_i)}_2$, we apply the vector Bernstein bound in \cite[Lemma~9]{hsu+etal:2014}. On the one hand $\sup_{x \in \mathcal{X}} \norm{\hbar(x)}_2 \le \sqrt{B}$ and on the other hand
$$
	\sum_{i=1}^n \expec[ \norm{\hbar(X_i)}_2^2 ] 
	= \sum_{i=1}^n \sum_{j=1}^m P(\hbar_j^2)
	= nm.
$$
The cited vector Bernstein bound gives
\begin{ceqn}
\begin{equation*}
	\forall t \ge 0,
	\prob\left[ 
	\norm{\sum_{i=1}^n \hbar(X_i)}_2 
	> \sqrt{nm} \left(1 + \sqrt{8t}\right) + \tfrac{4}{3} t \sqrt{B} 
	\right] 
	\le e^{-t}.
\end{equation*}
\end{ceqn}
Setting $t = \log(4/\delta)$, we find that, with probability at least $1-\delta/4$, we have
$$
	\norm{\sum_{i=1}^n \hbar(X_i)}_2 
	\le \sqrt{nm} \left(1 + \sqrt{8\log(4/\delta)}\right) + \tfrac{4}{3} \log(4/\delta) \sqrt{B}. 
$$
Since $\log(4/\delta) \ge \log(4)$, we have 
\begin{ceqn}
\begin{align*}
1 + \sqrt{8 \log(4/\delta)} \le 4 \sqrt{\log(4/\delta)}
\end{align*}
\end{ceqn}
and thus
\begin{ceqn}
\begin{align*}
	\norm{\sum_{i=1}^n \hbar(X_i)}_2 
	&\le 4 \sqrt{nm \log(4/\delta)} + \tfrac{4}{3} \log(4/\delta) \sqrt{B} \\
	&= 4 \sqrt{\log(4/\delta)} \left( \sqrt{nm} + \tfrac{1}{3} \sqrt{B \log(4/\delta)} \right). 
\end{align*}
\end{ceqn}
The condition on $n$ easily implies that
$$
	\tfrac{1}{3} \sqrt{B \log(4/\delta)} \le \tfrac{1}{4} \sqrt{nm}
$$
and thus
\begin{ceqn}
\begin{equation}
\label{eq:Pnhb}
	\norm{\sum_{i=1}^n \hbar(X_i)}_2 \le 5 \sqrt{nm \log(4/\delta)}.
\end{equation}
\end{ceqn}
\smallskip

\emph{Step 3.3. ---}
To control $\norm{\sum_{i=1}^n \hbar(X_i) \epsilon(X_i)}_2$, we apply Lemma~\ref{lem:HffnsbG} with $Z_i = \hbar(X_i) \epsilon(X_i)$. The random vectors $\hbar(X_i) \epsilon(X_i)$ for $i = 1, \ldots, n$ are independent and identically distributed and have mean zero. Since $\norm{\hbar(X_i)}_2 \le \sqrt{B}$ by \eqref{eq:B} and since $\epsilon \in \mathcal{G}(\tau^2)$ by Assumption~\ref{ass_1_sub_gauss}, we have, for all $t > 0$,
\begin{ceqn}
\begin{align*}
	\prob[ \norm{\hbar(X_i) \epsilon(X_i)}_2 > t ]
	&\le
	\prob[ \sqrt{B} \abs{\epsilon(X_i)} > t ] \\
	&\le 
	2 \exp\left( - \tfrac{t^2}{2 B \tau^2} \right),
\end{align*}
\end{ceqn}
and \eqref{eq:HffnsbG} holds with $\sigma^2 = B \tau^2$. Lemma~\ref{lem:HffnsbG} then implies that, with probability at least $1-\delta/4$ and $c=3$ that
\begin{ceqn}
\begin{equation}
\label{eq:hbareps}
	\norm{ \sum_{i=1}^n \hbar(X_i) \epsilon(X_i) }_2
	\le c \sqrt{n B \tau^2 \log(8m/\delta)}. 
\end{equation}
\end{ceqn}

\smallskip

\emph{Step 3.4. ---}
Recall the $n \times m$ matrix $H = (h_j(X_i))_{i,j}$ and put 
$$
	\Hbar = H G^{-1/2} = (\hbar_j(X_i))_{i,j}. 
$$
The empirical Gram matrix of the vector $\hbar = (\hbar_1, \ldots, \hbar_m)^\top \in L_2(P)^m$ based on the sample $X_1, \ldots, X_n$ is 
$$ 
	P_n(\hbar \hbar^\top) = n^{-1} \Hbar^T \Hbar. 
$$
We apply Lemma~\ref{lemma:smallest_eigen_value} with $g = \tilde{g} = \hbar$, $p = m$, and $\delta$ replaced by $\delta/4$. We find that, with probability at least $1-\delta/4$,
\begin{ceqn}
\begin{align}
	\nonumber
	\forall u \in \reals^m, \
	\norm{\Hbar u}_2^2 
	&= n \, u^\top P_n(\hbar \hbar^\top) u \\
\label{eq:0_ev}	
	&\ge n \left( 1 - \sqrt{2 B n^{-1} \log(4m/\delta)} \right) \norm{u}^2_2.
\end{align}
\end{ceqn}
Since $P_n(\hbar \hbar^\top) = n^{-1} \Hbar^\top \Hbar$, it follows that
\begin{ceqn}
\begin{equation}
\label{eq:lambdaminPnhb}
	\lambda_{\min}(P_n(\hbar \hbar^\top))
	\ge 1 - \sqrt{2 B n^{-1} \log(4m/\delta)}
	\ge \tfrac{2}{3}
\end{equation}
\end{ceqn}
as the assumption on $n$ implies that $2 B n^{-1} \log(4m/\delta) \le 1/9$.
\smallskip

By the union bound, the inequalities \eqref{eq:3_eps}, \eqref{eq:Pnhb}, \eqref{eq:hbareps}, and \eqref{eq:0_ev} hold simultaneously on an event with probability at least $1-\delta$. For the remainder of the proof, we work on this event, denoted by $E$.
\smallskip

\emph{Step~4.} --- 
We combine the bound \eqref{eq:alpha:ols:bound:pieces} on the estimation error with the bounds valid on the event $E$ constructed in Step~3. By \eqref{eq:lambdaminPnhb}, we have
$$
	\lambda_{\min}(P_n(\hbar \hbar^\top)) - \norm{P_n(\hbar)}_2^2
	\ge \tfrac{2}{3} - 25 m n^{-1} \log(4/\delta)
	\ge \tfrac{1}{3} 
$$
since the assumption on $n$ implies that $25 m n^{-1} \log(4/\delta) \le 1/3$. As $B \ge m \ge 1$, we have

\begin{ceqn}
\begin{align*}
	&\lefteqn{
		\norm{P_n(\hbar \epsilon)}_2 + \norm{P_n(\hbar)}_2 \abs{P_n(\epsilon)}
	} \\
	&\le c \sqrt{n^{-1} B \tau^2 \log(8m/\delta)} + \\
	&5 \sqrt{n^{-1} m \log(4/\delta)} \cdot \sqrt{2 n^{-1} \tau^2 \log(8/\delta)} \\
	&\le \sqrt{n^{-1} B \tau^2 \log(8m/\delta)}
	\left( c + 5\sqrt{2n^{-1} \log(4/\delta)} \right) \\
	&\leq (c+\sqrt{2/3}) \sqrt{n^{-1} B \tau^2 \log(8m/\delta)},
\end{align*}
\end{ceqn}
since, by assumption, $n \ge 75m \log(4/\delta)$ which implies that $\sqrt{n^{-1} \log(4/\delta)} \leq 1/(5\sqrt{3})$. We find
\begin{ceqn}
\begin{align*}
	&\abs{\aols(f) - P(f)}
	\le \sqrt{2\tau^2 n^{-1} \log(8/\delta)} + \\
	& \frac{1}{1/3} \cdot (c+\sqrt{2/3}) \sqrt{n^{-1} B \tau^2 \log(8m/\delta)} 
	\cdot 5 \sqrt{m n^{-1} \log(4/\delta)} \\
	&= \sqrt{2\tau^2 n^{-1} \log(8/\delta)}
	+ \\ &15 (c+\sqrt{2/3}) n^{-1} \sqrt{B \tau^2 m \log(8m/\delta) \log(4/\delta)},
\end{align*}
\end{ceqn}
and the value $c=3$ gives $15 (c+\sqrt{2/3}) \approx 57.2 <58$
which is the bound stated in Theorem~\ref{th:ols}. \qed
\section{Proof of Theorem \ref{th:lasso}}

For a vector $\beta \in \reals^m$ and for a non-empty set $S \subset \{1, \ldots, m\}$, write $\beta_S = (\beta_k)_{k \in S}$.
For any matrix $A \in \reals^{n\times m}$ and $k\in  \{1,\ldots, m\}$, let $A_k $ denote its $k$-th column and {if $S = \{k_1,\ldots,k_\ell\} \subset \{1,\ldots,m\}$ with $k_1 < \ldots < k_\ell$, write $A_S = (A_{k_1},\ldots,A_{ k_\ell}) \in \reals^{n \times \ell}$.}

The proof is organized in a similar way as the one of Theorem~\ref{th:ols}. We first provide an initial upper bound on the error (Step~1). Then we construct an event that (Step~2) has probability at least $1-\delta$  and (Steps~3, 4, 5) on which we can control each of the terms of the previous upper bound. The combination of all steps to deduce the final statement is made clear in Step~6.
\smallskip

\emph{Step~1.} ---
As in the proof of Theorem \ref{th:ols}, with $\bols(f)$ replaced by $\blasson(f)$, the estimation error of the LASSO estimator can be decomposed as
\begin{multline*}
n \left( \hat{\alpha}_n^{\mathrm{lasso}}(f) -  P(f) \right)  \\
= \sum_{i=1}^n \epsilon(X_i) + 
\bigl(\beta ^*(f) - \blasson(f)\bigr)^T \sum_{i=1}^n h(X_i).
\end{multline*}
Writing $\hat{u} = \blasson(f) - \beta^\star(f)$, we get, by the triangle and Hölder inequalities,
\begin{ceqn}
\begin{equation}
\label{eq:alpha:lasso:bound}
n \left| \hat \alpha_n^{\mathrm{lasso}} (f) -  P(f) \right|
\le \left| \sum_{i=1}^n \epsilon(X_i) \right|
+ \norm{\hat{u}}_1 \max_{k=1,\ldots,m} \left\lvert\sum_{i=1}^n h_k(X_i)\right\rvert.
\end{equation}
\end{ceqn}
\smallskip

\emph{Step 2.} --- Let $\delta > 0$. We construct an event, $E$, with probability at least $1-\delta$ on which four inequalities, namely \eqref{eq:0_eps_lasso}, \eqref{eq:1_h_lasso}, \eqref{eq:2_heps_lasso} and \eqref{eq:3_ev_lasso_2}, hold simultaneously.

\begin{itemize}[leftmargin=1pc]
	
	\item
	Since $\epsilon \in \subG( \tau^2)$, we can apply Lemma \ref{lemma:concentration} with $p=1$ to get that, with probability at least $1 - \delta/4$,
	\begin{ceqn}
	\begin{equation}
	\label{eq:0_eps_lasso}
	\left|\sum_{i=1}^n \epsilon (X_i) \right|
	\leq \sqrt{2n \tau^2\log(8/\delta)}.
	\end{equation}
	\end{ceqn}
	\item
	In view of \citep[Lemma~2.2]{boucheron+l+m:2013} and Assumption~\ref{ass_2_bounded_control}, we have $h_k\in \subG(U_h^2)$ for all $k=1,\ldots, m$. Hence we can apply Lemma~\ref{lemma:concentration} with $p=m$ to get that, with probability at least $1-\delta/4$,
	\begin{ceqn} 
	\begin{equation}
	\label{eq:1_h_lasso}
	\max_{k=1,\ldots, m} \left| \sum_{i=1}^n h_k(X_i)  \right| 
	\leq \sqrt{2n U_h^2\log(8m/\delta)}.
	\end{equation}
	\end{ceqn}
	\item
	By virtue of Assumptions~\ref{ass_1_sub_gauss} and \ref{ass_2_bounded_control}, we can apply Lemma~\ref{lem:subG} to find $h_k \epsilon \in \subG(C \tau^2 U_h^2)$ with $C = 9/2$. Hence we can apply Lemma~\ref{lemma:concentration} to get that, with probability at least $1-\delta/4$,
	\begin{ceqn}
	\begin{equation}
	\label{eq:2_heps_lasso}
	\max_{k=1,\ldots, m} \left| \sum_{i=1}^n h_k(X_i) \epsilon(X_i)  \right| \leq \sqrt{2nC\tau^2 U_h^2\log(8m/\delta))}.
	\end{equation}
	\end{ceqn}
	\item In view of \citep[Lemma~2.2]{boucheron+l+m:2013} and Assumptions~\ref{ass_2_bounded_control} and \ref{ass_orth_control}, we have $h_{k} h_{l} -  P (h_k h_l) \in \subG(U_h^4)$ for all $k,l \in \{1,\ldots, m \}$.  Hence we can apply Lemma~\ref{lemma:concentration} with $p=m^2$ to get that, with probability at least $1-\delta/4$, 
	\begin{ceqn}
	\begin{equation*}
	\max_{\substack{1\leq k \leq m \\ 1\leq l \leq m}} \left| \sum_{i=1}^n \{ h_k(X_i)h_l(X_i) - P (h_k h_l)\}    \right| 
	\leq \sqrt{2n U_h^4\log(8m^2/\delta)}.
	\end{equation*}
	\end{ceqn}
Denote by $\Delta =  (P_n- P) \{h h^T\} $. Because by assumption $2 (\ell^\star /  \gamma^\star) \sqrt{2 U_h^4\log(8m^2/\delta)}  \leq \sqrt{n}$, we have that 
\begin{ceqn}
\begin{align*}
(\ell^\star /  \gamma^\star)   \max_ {1\leq k,l \leq m} |\Delta_{k,l}|\leq 1 / 2.
\end{align*}
\end{ceqn}
Remark that
\begin{ceqn}
\begin{align*}
\forall u\in \mathbb R^{m}, \quad n^{-1} \norm{Hu}_2^2   -  u^TG  u   &= u^T  \Delta u.
\end{align*}
\end{ceqn}
Then, following \cite[equation (3.3)]{bickel2009simultaneous}, use the inequality $ |u^T  \Delta u|\leq  \norm{u}_1^2  \max_ {1\leq k,l \leq m} |\Delta_{k,l}|$, to obtain that, with probability $1-\delta/4$, for all $u\in \mathcal C (S^\star ; 3)$,
\begin{ceqn}
\begin{align*}
\norm{Hu}_2^2 / n & \geq   u^TG  u   - \norm{u}_1^2  \max_ {1\leq k,l \leq m} |\Delta_{k,l}|\\
&\geq  u^TG  u   - \norm{u}_2^2 \ell^\star  \max_ {1\leq k,l \leq m} |\Delta_{k,l}|\\
&\geq  u^TG  u   - (u^TG  u)  (\ell^\star /  \gamma^\star)   \max_ {1\leq k,l \leq m} |\Delta_{k,l}|\\
&\geq  (u^TG  u )  / 2.
\end{align*}
\end{ceqn}
It follows that with probability at least $1-\delta/4$,
\begin{ceqn}
\begin{align}\label{eq:3_ev_lasso_2}
  \norm{H u}_2^2  \ge ( n\gamma^\star /2)\norm{u}_2^2  .
\end{align}
\end{ceqn}
\end{itemize}

\smallskip

\emph{Step 3.} ---
We claim that, on the event $E$, we have
\begin{ceqn}
\begin{align}\label{eq:application_lemma_3_lasso2}
\forall u\in \mathcal C (S^\star ; 3),\quad  \norm{H_c u}_2^2 \ge  (n \gamma^\star /4) \norm{u}_2^2  
\end{align}
\end{ceqn}
We have
$$ 
H_{c}^T H_{c} 
= H^T H - n \, P_n(h) \, P_n(h)^T 
$$
and thus,
\begin{ceqn}
\begin{align*}
\norm{H_{c} u}_2^2
\ge
\norm{H u}_2^2 - n \max_{k=1,\ldots, m} | P_n(h_k)| ^2 \norm{u}_1^2.
\end{align*}
\end{ceqn}
We treat both terms on the right-hand side. On the one hand, we just have obtained a lower bound for the first term. On the other hand, in view of \eqref{eq:1_h_lasso} and because $\norm{u}_1^2\leq 16 \norm{u_{S^\star}}_1^2\le 16  \ell^\star \norm{u}_2^2$, we have
\begin{ceqn}
\begin{align*}
\norm{u}_1^2 \max_{k=1,\ldots, m} | P_n(h_k)| ^2
	&= \norm{u}_1^2  n^{-2}\cdot \max_{k \in S^\star} \Bigl| \sum_{i=1}^n h_k(X_i) \Bigr|^2 \\
	&\le 16 \ell^\star \norm{u}_2^2 \cdot n^{-2}\cdot 2nU_h^2 \log(8m/\delta) \\
	&\le  \norm{u}_2^2 \gamma^\star / 4
\end{align*}
\end{ceqn}
as $n \ge (16 \times 8) \ell^\star (U_h^{2}/\gamma^\star) \log(8m/\delta)$ by assumption. In combination with \eqref{eq:3_ev_lasso_2}, we find
\begin{ceqn}
\begin{equation*}
	\norm{H_{c} u}_2^2
	\ge n(\gamma^\star/2) \norm{u}_2^2 - n(\gamma^\star/4) \norm{u}_2^2
	= n (\gamma^\star/4) \norm{u}_2^2.
\end{equation*}
\end{ceqn}

\emph{Step 4.} ---
We claim that, on the event $E$, we have

\begin{ceqn}
\begin{equation}
\label{eq:bound_cov_1}
	\norm{H_c^T \epsilon_c^{(n)}}_\infty 
	\le (3+\sqrt{2}/8) \sqrt{\log(8m/\delta)} U_h \tau \sqrt{n}.
\end{equation}
\end{ceqn}

Indeed, on the left-hand side in \eqref{eq:bound_cov_1} we have in virtue of \eqref{eq:0_eps_lasso}, \eqref{eq:1_h_lasso} and \eqref{eq:2_heps_lasso},
\begin{ceqn}
\begin{align*}
&\norm{H_c^T \epsilon_c^{(n)}}_\infty \\
&= \max_{k=1,\ldots, m} \left| 
\sum_{i=1}^n (h_k(X_i) - P_n(h_k)) (\epsilon(X_i) - P_n(\epsilon))
\right|
\\
&= \max_{k=1,\ldots, m} \left| 
\left(\sum_{i=1}^n h_k(X_i) \epsilon(X_i)\right) 
- n P_n(h_k)  P_n(\epsilon)   
\right|  \\
&\leq  \max_{k=1,\ldots, m} 
\left| \sum_{i=1}^n h_k(X_i) \epsilon(X_i) \right| 
+ n^{-1} \left|\sum_{i=1}^n \epsilon (X_i) \right|   \max_{k=1,\ldots, m} \left| \sum_{i=1}^n h_k(X_i) \right| \\
&\leq \sqrt{2n C\tau^2 U_h^2 \log(8m/\delta)} + \\
&n^{-1}\sqrt{2n \tau^2\log(8/\delta)}
\sqrt{2n U_h^2\log(8m/\delta)} \\
&= \sqrt{2n C\tau^2 U_h^2\log(8m/\delta)} 
\left( 1 + \sqrt{2 \log(8/\delta) / (Cn)} \right).
\end{align*}
\end{ceqn}
{Since $\ell^\star \ge 1$ and $\ell^\star U_h^{2} \ge \sum_{k\in S^\star} P(h_k^2) \ge \gamma^\star$, the assumed lower bound on $n$ implies that $n \ge 128 \log(8/\delta)$. As $C = 9/2$, the factor $\sqrt{2C}(1 + \sqrt{2 \log(8/\delta) / (Cn)})$ is bounded by $3 + \sqrt{2}/8$ and we get \eqref{eq:bound_cov_1}.
\smallskip

\emph{Step 5.} --- Recall $\hat{u} = \blasson(f) -  \beta^{\star}(f)$. We claim that, on the event $E$, we have
\begin{ceqn}
\begin{equation}
\label{eq:hatu48}
	\norm{\hat{u}}_1 \le 48 \lambda \ell^\star / \gamma^\star.
\end{equation}
\end{ceqn}
To prove this result, we shall rely on the following lemma. 

\begin{lemma}
	\label{lemma:cone_property}
	If $n \lambda \geq 2 \norm{H_c^T \epsilon_c^{(n)}}_{\infty}$ then, writing $\hat{u} = \blasson(f) -  \beta^{\star}(f)$, we have $\hat{u} \in \mathcal{C}(S^\star; 3)$ and
	\begin{ceqn}
	\begin{equation}
	\label{ineq:cone_property}
	\norm{H_c \hat{u}}_2^2
	\leq 3 n {\lambda} \norm{\hat{u}_{S^\star}}_1.
	\end{equation}
	\end{ceqn}
\end{lemma}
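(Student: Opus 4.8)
The plan is to derive both assertions simultaneously from the \emph{basic inequality} that comes from the optimality of the penalized least-squares minimizer. Writing $\hat u = \blasson(f) - \beta^\star(f)$, I would first compare the LASSO objective in \eqref{eq:lasso} evaluated at $\blasson(f)$ with its value at the oracle vector $\beta^\star(f)$, which by definition of the minimizer gives
$$
\frac{1}{2n}\norm{f_c^{(n)} - H_c \blasson(f)}_2^2 + \lambda\norm{\blasson(f)}_1
\le \frac{1}{2n}\norm{f_c^{(n)} - H_c \beta^\star(f)}_2^2 + \lambda\norm{\beta^\star(f)}_1 .
$$
Substituting the regression identity $f_c^{(n)} = H_c\beta^\star(f) + \epsilon_c^{(n)}$ from Step~2.2 of the proof of Theorem~\ref{th:ols}, the two residual vectors become $\epsilon_c^{(n)} - H_c\hat u$ and $\epsilon_c^{(n)}$; expanding the first square and cancelling the common term $\norm{\epsilon_c^{(n)}}_2^2$ leaves
$$
\frac{1}{2n}\norm{H_c\hat u}_2^2
\le \frac{1}{n}(\epsilon_c^{(n)})^T H_c \hat u + \lambda\bigl(\norm{\beta^\star(f)}_1 - \norm{\blasson(f)}_1\bigr).
$$

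Next I would control the cross term by Hölder's inequality, $(\epsilon_c^{(n)})^T H_c\hat u = (H_c^T\epsilon_c^{(n)})^T\hat u \le \norm{H_c^T\epsilon_c^{(n)}}_\infty\,\norm{\hat u}_1$, and then invoke the hypothesis $n\lambda \ge 2\norm{H_c^T\epsilon_c^{(n)}}_\infty$ to replace the noise factor by $\lambda/2$. After multiplying through by $2$, this produces
$$
\frac{1}{n}\norm{H_c\hat u}_2^2
\le \lambda\norm{\hat u}_1 + 2\lambda\bigl(\norm{\beta^\star(f)}_1 - \norm{\blasson(f)}_1\bigr).
$$

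The crux---and the only step requiring genuine care---is turning the $\ell_1$ difference into a statement about the active set $S^\star = \supp(\beta^\star(f))$. Since $\beta^\star(f)$ vanishes off $S^\star$ and $\blasson(f) = \beta^\star(f) + \hat u$, splitting each $\ell_1$-norm along $S^\star$ and $\overline{S^\star}$ and applying the triangle inequality yields $\norm{\beta^\star(f)}_1 - \norm{\blasson(f)}_1 \le \norm{\hat u_{S^\star}}_1 - \norm{\hat u_{\overline{S^\star}}}_1$, while $\norm{\hat u}_1 = \norm{\hat u_{S^\star}}_1 + \norm{\hat u_{\overline{S^\star}}}_1$. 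Substituting both collapses the right-hand side to $\lambda\bigl(3\norm{\hat u_{S^\star}}_1 - \norm{\hat u_{\overline{S^\star}}}_1\bigr)$. From this single inequality both conclusions drop out: nonnegativity of the left-hand side forces $\norm{\hat u_{\overline{S^\star}}}_1 \le 3\norm{\hat u_{S^\star}}_1$, i.e.\ $\hat u \in \mathcal{C}(S^\star;3)$, and discarding the nonpositive term $-\lambda\norm{\hat u_{\overline{S^\star}}}_1$ gives $\norm{H_c\hat u}_2^2 \le 3n\lambda\norm{\hat u_{S^\star}}_1$, which is \eqref{ineq:cone_property}. I note that this argument uses only the threshold on $\lambda$: the restricted-eigenvalue condition (Assumption~\ref{ass_4_ev_cond_lasso}) and the lower bound \eqref{eq:application_lemma_3_lasso2} play no role here and enter only when Lemma~\ref{lemma:cone_property} is subsequently used to deduce \eqref{eq:hatu48}.
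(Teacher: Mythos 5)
Your proposal is correct and follows essentially the same route as the paper's proof: the paper's comparison $G(\hat u)\le G(0)$ is exactly your basic inequality comparing the objective at $\blasson(f)$ and at $\beta^\star(f)$, followed by the same H\"older step, the same triangle-inequality split of the $\ell_1$-norms over $S^\star$ and $\overline{S^\star}$, and the same nonnegativity argument yielding both the cone property and \eqref{ineq:cone_property}. Your closing remark is also accurate: the paper likewise invokes the restricted eigenvalue bound only afterwards, in Step~5, to deduce \eqref{eq:hatu48}.
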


\begin{proof}
	This is just a reformulation of the reasoning on p.~298 in \citep{tibshirani+w+h;2015} {with a slightly sharper upper bound}. The vector $\hat{\nu}$ at the right-hand side of their Eq.~(11.23) {can be replaced by} $\hat{\nu}_S$. For the sake of completeness, we provide the details. 
	
	In the proof we use the short-cuts $\beta^\star = \beta^\star(f)$ and $\blasson = \blasson(f)$. Recall $\epsilon_c^{(n)} = f_c^{(n)} - H_c \beta^\star(f)$ and define
	\begin{ceqn}
	\begin{align*}
	G(u) &= \|f^{(n)}_c-H_c(\beta^\star +   u) \|_2^2 /(2n) + \lambda \| \beta^\star+   u\|_1 \\
	&= \|\epsilon_c^{(n)} - H_c   u \|_2^2 /(2n) + \lambda \| \beta^\star +   u\|_1.
	\end{align*}
	\end{ceqn}
	Because $G(\hat u )\leq G(0)$, we have
	\begin{ceqn}
	\begin{align*}
		\|H_c \hat u \| _2^2/(2n) 
		\leq \hat{u}^T H_c^T \epsilon_c^{(n)} / n + \lambda ( \|\beta^\star\|_1 -\| \beta^\star + \hat u \|_1 )
	\end{align*}
	\end{ceqn}
	From the triangle inequality
\begin{ceqn}
\begin{align*}
	\| (\beta^\star -  (-\hat  u))_{S^\star} \|_1\geq  | \| \beta^\star_{S^\star}\|_1 -  \| \hat  u_{S^\star} \|_1| \geq \| \beta^\star_{S^\star}\|_1 -  \| \hat  u_{S^\star} \|_1,
\end{align*}
\end{ceqn}
implying that
\begin{ceqn}
	\begin{align*}
	&\|\beta^\star\|_1 -\| \beta^\star + \hat u \|_1 \\
	&= \|\beta^\star\|_1 -\| (\beta^\star + \hat u)_{S^\star}  \|_1   - \|( \beta^\star + \hat u)_{\overline{S^\star}} \|_1 \\
	&\leq \|\beta^\star\|_1 - \| \beta^\star_{S^\star}\|_1 + \| \hat  u_{S^\star} \|_1 -  \|  (\beta^\star +\hat  u )_{\overline{S^\star}} \|_1\\
	&= \| \hat  u_{S^\star} \|_1 -  \| \hat  u _{\overline{S^\star}} \|_1.
	\end{align*}
	\end{ceqn}
	From H\"{o}lder's inequality, we get 
	\begin{ceqn}
	\begin{align*}
\abs{\hat{u}^T H_c^T \epsilon_c^{(n)}} \leq \norm{H_c^T \epsilon_c^{(n)}}_\infty \cdot \norm{\hat  u}_1,
	\end{align*}	
	\end{ceqn}
	 which leads to
	 \begin{ceqn}
	\begin{align*}
	\norm{H_c \hat u}_2^2/(2n) 
	\leq \| H_c^T \epsilon_c^{(n)}  \|_\infty \|\hat  u\|_1/n 
	+ \lambda (  \norm{\hat{u}_{S^\star}}_1 - \norm{\hat{u}_{\overline{S^\star}}}_1 ).
	\end{align*}
	\end{ceqn}
	Consequently, because $\norm{H_c^T\epsilon_c^{(n)}}_\infty / n \leq  \lambda/2$ by assumption, we obtain
	\begin{ceqn}
	\begin{align*}
	0\leq \|H_c \hat u \| _2^2/(2n)&\leq \lambda ( \|\hat  u\|_1/2 + \| \hat u_{S^\star} \|_1 -  \|  \hat  u _{\overline{S^\star}} \|_1 ) \nonumber \\
	& =  (\lambda/2) ( 3  \| \hat u_{S^\star} \|_1  -  \|  \hat  u _{\overline{S^\star}} \|_1 ).
	\end{align*}
	\end{ceqn}
	The right-hand side must be nonnegative, whence $\norm{\hat{u}_{\overline{S^\star}}}_1 \le 3 \norm{\hat{u}_{S^\star}}_1$, i.e., $\hat{u} \in \mathcal{C}(S; 3)$. The bound in \eqref{ineq:cone_property} follows as well.
\end{proof}

On the event $E$, the conclusion of Lemma~\ref{lemma:cone_property} is valid because the bound on $\norm{H_c^T \epsilon_c^{(n)}}_\infty$ in~\eqref{eq:bound_cov_1} and the assumption on $\lambda$ in Theorem~\ref{th:lasso} together imply that $\lambda \ge 2 \norm{H_c^T \epsilon_c^{(n)}}_\infty / n$.  The cone property of Lemma~\ref{lemma:cone_property} yields $\hat{u} \in \mathcal{C}(S^\star; 3)$ so that
\begin{ceqn}
\begin{equation}
\label{eq:hatu4}
	\norm{\hat{u}}_1 
	= \norm{\hat{u}_{S^\star}}_1 + \norm{\hat{u}_{\overline{S^\star}}}_1
	\le 4 \norm{\hat{u}_{S^\star}}_1.
\end{equation}
\end{ceqn}
 Thanks to \eqref{eq:application_lemma_3_lasso2} and Lemma \ref{lemma:cone_property}, and since $\abs{S^\star} = \ell^\star$, we get
 \begin{ceqn}
\begin{align*}
	\norm{\hat{u}_{S^\star}}_1^2
	&\le \ell^\star \norm{\hat{u}_{S^\star}}_2^2 \\
		&\le \ell^\star \norm{\hat{u}}_2^2 \\
	&\le \ell^\star \cdot n^{-1} (4/\gamma^\star) \norm{H_{c} \hat{u}}_2^2 \\
	&\le \ell^\star \cdot n^{-1} (4/\gamma^\star) \cdot 3n\lambda \norm{\hat{u}_{S^\star}}_1
	= 12 \ell^\star (\lambda/\gamma^\star) \norm{\hat{u}_{S^\star}}_1.
\end{align*}
\end{ceqn}
It follows that $\norm{\hat{u}_{S^\star}}_1 \le 12 \ell^\star \lambda/\gamma^\star$. In combination with \eqref{eq:hatu4}, we find \eqref{eq:hatu48}.

\smallskip

\emph{Step 6.} ---
Equation \eqref{eq:alpha:lasso:bound} gave a bound on the estimation error involving three terms. On the event $E$, these terms were shown to be bounded in \eqref{eq:0_eps_lasso}, \eqref{eq:1_h_lasso}, and \eqref{eq:hatu48}. It follows that, on $E$, we finally have
\begin{ceqn}
\begin{align*}
	&n \left| \hat \alpha_n^{\mathrm{lasso}} (f) -  P(f) \right| \\
	&\le
	\sqrt{2n\tau^2 \log(8/\delta)}
	+
	48 \lambda \ell^\star / \gamma^\star
	\cdot
	\sqrt{2nU_h^2 \log(8m/\delta)}.
\end{align*}
\end{ceqn}

Divide by $n$ and use $48\sqrt{2} < 68$ to obtain \eqref{ineq:alasso_general}.
\qed
\section{Proof of Theorem \ref{th:lasso_support_recovery}}

Recall that $S^\star = \{ j = 1, \ldots, m : \beta_j^\star(f) \ne 0\}$ with $\ell^\star = \abs{S^\star}$ and that $\overline{S^\star} = \{1,\ldots,m\} \setminus S^\star$. Further, $H_{c,S^\star}$ is the $n \times \ell^\star$ matrix having columns $H_{c,k}$ for $k \in S^\star$, where $H_{c,k}$ is the $k$-th column of $H_c$.
\smallskip

\emph{Step~1.} ---
We first establish some (non-probabilistic) properties of $\blasson(f)$. To this end, we consider the linear regression of the non-active control variates on the active ones: for $k \in \overline{S^\star} = \{ j = 1, \ldots, m : \beta_j^\star(f) = 0 \}$, this produces the coefficient vector
\begin{ceqn}
\begin{equation*}
\hat\theta_{n}^{(k)} \in 
\argmin_{\theta \in \reals^{\ell^\star}} 
\norm{ H_{c,k} - H_{c,S^{\star}} \theta }_2.
\end{equation*}
\end{ceqn}
Further, we consider the OLS oracle estimate $\hat{\beta}_{n}^{\star}$, which is the OLS estimator based upon the active control variables only, i.e.,
\begin{ceqn}
\begin{equation*}
\hat{\beta}_{n}^{\star} \in 
\argmin_{\beta \in \reals^{\ell^\star}} 
\| f^{(n)}_c  - H_{c,S^{\star}} \beta \| _2.
\end{equation*}
\end{ceqn}
Our assumptions will imply that, with large probability, $H_{c,S^{\star}}$ has rank $\ell^*$, in which case
\begin{ceqn}
\begin{align*}
\hat\theta_{n}^{(k)}
&= (H_{c,S^{\star}}^T H_{c,S^{\star}})^{-1} H_{c,S^{\star}}^T H_{c,k},
\\
\hat{\beta}_{n}^{\star}
&= (H_{c,S^{\star}}^T H_{c,S^{\star}})^{-1} H_{c,S^{\star}}^T f_c^{(n)}.
\end{align*}
\end{ceqn}
The following lemma provides a number of (non-probabilistic) properties of $\blasson(f)$, given certain conditions on $H_c$ and $\epsilon_c^{(n)}$. Recall that a norm $\norm{\,\cdot\,}$ on $\reals^p$ induces a matrix norm on $\reals^{p \times p}$ via $\norm{A} = \sup \{ \norm{A u} : u \in \reals^p, \norm{u} = 1 \}$ for $A \in \reals^{p \times p}$.

\begin{lemma}
	\label{lemma:tib_book_deterministic_result}
	If $H_{c,S^{\star}}$ has rank $\ell^\star$ and if there exists $\kappa \in (0, 1]$ such that
	\begin{ceqn}
	\begin{align}
	\label{eq:sub_model_1}
	\max_{k\in \overline{S^\star}} 
	\norm{\hat \theta_{n}^{(k)}}_1 
	&\leq 1 - \kappa, \\
	\label{eq:sub_model_2}
	\max_{k\in \overline{S^\star}}  
	\abs{(H_{c,k} - H_{c,S^{\star}} \hat{\theta}_{n}^{(k)})^T  \epsilon_c^{(n)}} 
	&\le \kappa \lambda n,
	\end{align}
	\end{ceqn}
	then the minimizer $\blasson(f)$ in \eqref{eq:lasso} is unique, with support $\supp(\blasson (f)) \subset S^\star$, and it satisfies
	\begin{multline}
	\label{eq:blassonk:bound:1}
	\max_{k\in S^{\star}} 
	\abs{\blassonk (f) - {\beta}_k^\star (f)} 
	\leq \\
	\max_{k\in S^{\star}} \abs{\hat{\beta}_{n,k}^{\star} - \beta^\star_k (f)} 
	+ n\lambda \norm{(H_{c,S^{\star}}^T H_{c,S^{\star}})^{-1}}_\infty.
	\end{multline}
\end{lemma}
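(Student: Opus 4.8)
The plan is to use the \emph{primal--dual witness} (PDW) construction, following the primal--dual witness reasoning of \citet[Section~11.4.2]{tibshirani+w+h;2015}. The starting point is the subgradient (KKT) characterization of a minimizer in \eqref{eq:lasso}: a vector $\hat\beta \in \reals^m$ is a LASSO solution if and only if there is $z \in \reals^m$ with $z \in \partial \norm{\hat\beta}_1$ (so $z_k = \operatorname{sign}(\hat\beta_k)$ when $\hat\beta_k \ne 0$ and $\abs{z_k} \le 1$ otherwise) such that $n^{-1} H_c^\top(f_c^{(n)} - H_c\hat\beta) = \lambda z$. Because the fitted vector $H_c\hat\beta$ is identical for every LASSO solution (the quadratic loss is strictly convex in the fit), the vector $z$ on the right-hand side is common to all solutions; hence any coordinate $k$ with $\abs{z_k} < 1$ must have $\hat\beta_k = 0$ for \emph{every} solution. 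This is the mechanism that will force the support into $S^\star$.

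Next I would carry out the witness construction. I set the candidate to vanish off the active set, $\tilde\beta_{\overline{S^\star}} = 0$, and let $\tilde\beta_{S^\star}$ be the minimizer of the LASSO objective restricted to the active coordinates; since $H_{c,S^\star}$ has rank $\ell^\star$, this restricted objective is strictly convex, so $\tilde\beta_{S^\star}$ is unique and solves $n^{-1} H_{c,S^\star}^\top(f_c^{(n)} - H_{c,S^\star}\tilde\beta_{S^\star}) = \lambda z_{S^\star}$ with $z_{S^\star} \in \partial\norm{\tilde\beta_{S^\star}}_1$, whence $\norm{z_{S^\star}}_\infty \le 1$. I then \emph{define} the off-support dual variable $z_{\overline{S^\star}} = (n\lambda)^{-1} H_{c,\overline{S^\star}}^\top(f_c^{(n)} - H_{c,S^\star}\tilde\beta_{S^\star})$, so that $(\tilde\beta, z)$ satisfies full stationarity by construction. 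Writing $\Pi^\perp$ for the projection off the column space of $H_{c,S^\star}$ and using $f_c^{(n)} = H_{c,S^\star}\beta^\star(f)_{S^\star} + \epsilon_c^{(n)}$, the residual decomposes as $f_c^{(n)} - H_{c,S^\star}\tilde\beta_{S^\star} = \Pi^\perp \epsilon_c^{(n)} + n\lambda H_{c,S^\star}(H_{c,S^\star}^\top H_{c,S^\star})^{-1}z_{S^\star}$. Recognising $H_{c,k} - H_{c,S^\star}\hat\theta_n^{(k)} = \Pi^\perp H_{c,k}$ and that the $k$-th row of $H_{c,\overline{S^\star}}^\top H_{c,S^\star}(H_{c,S^\star}^\top H_{c,S^\star})^{-1}$ equals $(\hat\theta_n^{(k)})^\top$, the $k$-th entry of the dual variable becomes $z_k = (n\lambda)^{-1}(H_{c,k}-H_{c,S^\star}\hat\theta_n^{(k)})^\top \epsilon_c^{(n)} + (\hat\theta_n^{(k)})^\top z_{S^\star}$.

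From here the two hypotheses do the work. Bounding the first summand by $(n\lambda)^{-1}\cdot\kappa\lambda n = \kappa$ via \eqref{eq:sub_model_2}, and the second by $\norm{\hat\theta_n^{(k)}}_1\norm{z_{S^\star}}_\infty \le 1-\kappa$ via \eqref{eq:sub_model_1}, gives dual feasibility $\norm{z_{\overline{S^\star}}}_\infty \le 1$, which already confirms that $\tilde\beta$ is a genuine minimizer supported on $S^\star$. The coefficient bound \eqref{eq:blassonk:bound:1} then falls out of the restricted stationarity equation, which rearranges to $\tilde\beta_{S^\star} - \beta^\star(f)_{S^\star} = (\hat\beta_n^\star - \beta^\star(f)_{S^\star}) - n\lambda(H_{c,S^\star}^\top H_{c,S^\star})^{-1}z_{S^\star}$; taking $\ell_\infty$ norms and using $\norm{(H_{c,S^\star}^\top H_{c,S^\star})^{-1}z_{S^\star}}_\infty \le \norm{(H_{c,S^\star}^\top H_{c,S^\star})^{-1}}_\infty\,\norm{z_{S^\star}}_\infty$ with $\norm{z_{S^\star}}_\infty \le 1$ yields the two terms on the right of \eqref{eq:blassonk:bound:1}. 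The main obstacle is the \emph{uniqueness} and the \emph{strict} support inclusion: the feasibility bound just derived is only $\le 1$, whereas the argument of the first paragraph rules out off-support coordinates in \emph{every} minimizer only under strict dual feasibility $\norm{z_{\overline{S^\star}}}_\infty < 1$ (which, together with full rank of $H_{c,S^\star}$, also forces uniqueness through equality of fits). I would obtain this strictness from the slack $\kappa > 0$ in \eqref{eq:sub_model_1}, namely $\norm{\hat\theta_n^{(k)}}_1 \le 1-\kappa < 1$, combined with the fact that in the intended application \eqref{eq:sub_model_2} holds with a strict inequality; handling this gap cleanly is the step that demands the most care.
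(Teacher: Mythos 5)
Your proposal follows exactly the route the paper takes: the paper's proof of this lemma is a citation to the primal--dual witness argument behind Theorem~11.3 of \cite{tibshirani+w+h;2015}, which is precisely the construction you spell out. The witness built from the restricted problem, the identity $z_k = (n\lambda)^{-1}(H_{c,k}-H_{c,S^\star}\hat\theta_n^{(k)})^T\epsilon_c^{(n)} + (\hat\theta_n^{(k)})^T z_{S^\star}$ for $k\in\overline{S^\star}$, the use of \eqref{eq:sub_model_1}--\eqref{eq:sub_model_2} to bound it, and the rearrangement of the restricted stationarity equation into \eqref{eq:blassonk:bound:1} are all correct. The one genuine gap is the one you flag yourself: the hypotheses only give $\abs{z_k}\le\kappa+(1-\kappa)=1$, while the mechanism of your first paragraph (coordinates with $\abs{z_k}<1$ vanish in \emph{every} solution) needs \emph{strict} dual feasibility to deliver uniqueness and $\supp(\blasson(f))\subset S^\star$; without uniqueness, the bound \eqref{eq:blassonk:bound:1} is only proved for your witness, not for $\blasson(f)$. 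Your proposed repair --- invoking the fact that \eqref{eq:sub_model_2} holds strictly in the intended application --- proves a lemma with strict hypotheses, not the statement above, whose inequalities are non-strict, so as a proof of this lemma it remains incomplete.

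The gap can be closed using only the slack $\kappa>0$ in \eqref{eq:sub_model_1}; no strictness in \eqref{eq:sub_model_2} is needed. Let $\hat\beta$ be any minimizer of \eqref{eq:lasso}, $\tilde\beta$ your witness, and $d=\hat\beta-\tilde\beta$. All minimizers share the same fit, hence the same subgradient $z$ and the same $\ell_1$ norm; in particular $H_cd=0$, so $z^Td=(n\lambda)^{-1}(f_c^{(n)}-H_c\tilde\beta)^TH_cd=0$, and $t\mapsto\norm{\tilde\beta+td}_1$ is constant on $[0,1]$. Differentiating the latter at $t=0^{+}$ and subtracting $z^Td=0$ (using $z_k=\operatorname{sign}(\tilde\beta_k)$ on $\supp(\tilde\beta)$) gives $\sum_{k:\tilde\beta_k=0}(\abs{d_k}-z_kd_k)=0$; every summand is nonnegative, so $d_k=0$ whenever $\abs{z_k}<1$, and $d_k=z_k\abs{d_k}$ whenever $\abs{z_k}=1$. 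Hence $d$ is supported on $S^\star\cup F$, where $F=\{k\in\overline{S^\star}:\abs{z_k}=1\}$, and $H_cd=0$ together with the rank assumption gives $d_{S^\star}=-\sum_{k\in F}d_k\hat\theta_n^{(k)}$. Then $0=z^Td=\sum_{k\in F}\abs{d_k}-\sum_{k\in F}d_k\,z_{S^\star}^T\hat\theta_n^{(k)}\ge\kappa\sum_{k\in F}\abs{d_k}$, by H\"older's inequality, $\norm{z_{S^\star}}_\infty\le1$ and \eqref{eq:sub_model_1}; this forces $d_F=0$ and then $d_{S^\star}=0$, giving uniqueness and the support inclusion simultaneously. (Incidentally, where the paper actually applies the lemma, namely through Lemma~\ref{lemma:tib_book_deterministic_result_new}, inequality \eqref{eq:sub_model_2} is produced with strict inequality, so strict dual feasibility and your original argument do suffice there; but that observation does not prove the lemma as stated.)
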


\begin{proof}
	The proof of the previous result is actually contained in \cite{tibshirani+w+h;2015}. The uniqueness of the LASSO solution and the property that it does not select inactive covariates follows directly from the proof of their Theorem~11.3. The only difference is that, in our case, the inequality~\eqref{eq:sub_model_2} is an assumption whereas in \cite{tibshirani+w+h;2015} it is a property of the Gaussian fixed design model. The approach in \cite{tibshirani+w+h;2015} is based upon checking the \emph{strict dual feasibility condition}. The bound~\eqref{eq:blassonk:bound:1} is Eq.~(11.37) in \cite{tibshirani+w+h;2015}.
\end{proof}

We slightly modify Lemma~\ref{lemma:tib_book_deterministic_result} to make the conditions~\eqref{eq:sub_model_1} and~\eqref{eq:sub_model_2} easier to check and to make the bound~\eqref{eq:blassonk:bound:1} easier to use. 

\begin{lemma}
	\label{lemma:tib_book_deterministic_result_new}
	If there exists $\nu > 0$ such that
	\begin{ceqn}
	\begin{equation}
	\label{eq:emp_gram_ev_restricted} 
	\forall u\in \reals^{\ell^\star}, \qquad \norm{H_{c,S^{\star}} u}_2^2
	\ge n \nu \norm{u}_2^2,
	\end{equation}
	\end{ceqn}
	and if there exists $\kappa \in (0,1]$ such that
	\begin{ceqn}
	\begin{align}	
	\label{eq:sub_model_1_new}
	\frac{\ell^\star} {\nu n}
	\max_{k\in \overline{S^\star}} \max_{j\in S^{\star}} 
	\abs{H_{c,j}^T H_{c,k}}  
	&\leq 1 - \kappa, \\
	\label{eq:sub_model_2_new}
	\max_{k=1,\ldots, m } \abs{H_{c,k}^T \epsilon_c^{(n)}}  
	&\le \frac{1}{2} \kappa \lambda n,
	\end{align}
	\end{ceqn}
	then the minimizer $\blasson(f)$ in \eqref{eq:lasso} is unique, with support satisfying $\supp(\blasson (f)) \subset S^\star$, and it holds that
	\begin{ceqn}
	\begin{equation}
	\label{eq:blassonk:bound:2}
	\max_{k\in S^{\star}} 
	\abs{\blassonk (f) - {\beta}_k^\star (f)}
	\leq (1 + \kappa/2) \sqrt{\ell^\star} \lambda / \nu.
	\end{equation}
	\end{ceqn}
\end{lemma}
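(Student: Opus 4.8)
The plan is to show that the three simpler conditions \eqref{eq:emp_gram_ev_restricted}--\eqref{eq:sub_model_2_new} imply the hypotheses of Lemma~\ref{lemma:tib_book_deterministic_result}, then to invoke that lemma, and finally to convert its conclusion \eqref{eq:blassonk:bound:1} into the cleaner bound \eqref{eq:blassonk:bound:2}. Throughout I would abbreviate $A = H_{c,S^{\star}}^T H_{c,S^{\star}}$, an $\ell^\star \times \ell^\star$ matrix. The restricted eigenvalue condition \eqref{eq:emp_gram_ev_restricted} is exactly the statement that $A \succeq n\nu I$, so $A$ is invertible, $H_{c,S^{\star}}$ has full rank $\ell^\star$, and $\norm{A^{-1}}_2 \le 1/(n\nu)$. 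The workhorse is the elementary norm comparison $\norm{M}_\infty \le \sqrt{\ell^\star}\,\norm{M}_2$, valid for any $\ell^\star \times \ell^\star$ matrix $M$, which in particular gives $\norm{A^{-1}}_\infty \le \sqrt{\ell^\star}/(n\nu)$.

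First I would verify \eqref{eq:sub_model_1}. Using $\hat\theta_{n}^{(k)} = A^{-1} H_{c,S^{\star}}^T H_{c,k}$, the chain $\norm{\hat\theta_{n}^{(k)}}_1 \le \sqrt{\ell^\star}\,\norm{\hat\theta_{n}^{(k)}}_2 \le \sqrt{\ell^\star}\,\norm{A^{-1}}_2\,\norm{H_{c,S^{\star}}^T H_{c,k}}_2$ together with $\norm{H_{c,S^{\star}}^T H_{c,k}}_2 \le \sqrt{\ell^\star}\max_{j\in S^{\star}}\abs{H_{c,j}^T H_{c,k}}$ yields $\norm{\hat\theta_{n}^{(k)}}_1 \le \frac{\ell^\star}{n\nu}\max_{j\in S^{\star}}\abs{H_{c,j}^T H_{c,k}}$. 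Taking the maximum over $k\in\overline{S^\star}$ and applying \eqref{eq:sub_model_1_new} gives $\max_k\norm{\hat\theta_{n}^{(k)}}_1 \le 1-\kappa$, which is \eqref{eq:sub_model_1}.

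Next I would verify \eqref{eq:sub_model_2} by splitting $(H_{c,k}-H_{c,S^{\star}}\hat\theta_{n}^{(k)})^T\epsilon_c^{(n)} = H_{c,k}^T\epsilon_c^{(n)} - (\hat\theta_{n}^{(k)})^T H_{c,S^{\star}}^T\epsilon_c^{(n)}$. The first piece is at most $\tfrac12\kappa\lambda n$ by \eqref{eq:sub_model_2_new}; for the second I would apply H\"older, $\abs{(\hat\theta_{n}^{(k)})^T H_{c,S^{\star}}^T\epsilon_c^{(n)}} \le \norm{\hat\theta_{n}^{(k)}}_1\,\norm{H_{c,S^{\star}}^T\epsilon_c^{(n)}}_\infty \le (1-\kappa)\cdot\tfrac12\kappa\lambda n$, using \eqref{eq:sub_model_2_new} together with the bound on $\norm{\hat\theta_{n}^{(k)}}_1$ just obtained. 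Summing the two pieces stays at or below $\kappa\lambda n$, which is \eqref{eq:sub_model_2}. Lemma~\ref{lemma:tib_book_deterministic_result} then delivers uniqueness, $\supp(\blasson(f))\subset S^\star$, and the bound \eqref{eq:blassonk:bound:1}.

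It remains to bound the two terms on the right of \eqref{eq:blassonk:bound:1}. The second term is immediate: $n\lambda\norm{A^{-1}}_\infty \le \sqrt{\ell^\star}\lambda/\nu$. For the first, I would use $f_c^{(n)} = H_{c,S^{\star}}\beta^\star_{S^{\star}}(f) + \epsilon_c^{(n)}$ (valid since $\beta^\star(f)$ is supported on $S^\star$) to obtain $\hat{\beta}_n^\star - \beta^\star_{S^{\star}}(f) = A^{-1} H_{c,S^{\star}}^T\epsilon_c^{(n)}$, so that $\max_{k\in S^{\star}}\abs{\hat{\beta}_{n,k}^\star-\beta_k^\star(f)} = \norm{A^{-1}H_{c,S^{\star}}^T\epsilon_c^{(n)}}_\infty \le \norm{A^{-1}}_\infty\,\norm{H_{c,S^{\star}}^T\epsilon_c^{(n)}}_\infty \le \tfrac{\sqrt{\ell^\star}}{n\nu}\cdot\tfrac12\kappa\lambda n = \tfrac12\kappa\sqrt{\ell^\star}\lambda/\nu$. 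Adding the two contributions gives $(1+\kappa/2)\sqrt{\ell^\star}\lambda/\nu$, which is \eqref{eq:blassonk:bound:2}. The only genuine obstacle is bookkeeping: keeping the powers of $\sqrt{\ell^\star}$ and the $\kappa$-factors straight while passing between the spectral and induced-$\infty$ matrix norms, so the heart of the argument is correctly marshalling the comparison $\norm{M}_\infty\le\sqrt{\ell^\star}\,\norm{M}_2$ and H\"older's inequality; there is no conceptual difficulty beyond this.
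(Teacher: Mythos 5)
Your proposal is correct and follows essentially the same route as the paper's proof: reduce conditions \eqref{eq:sub_model_1_new}--\eqref{eq:sub_model_2_new} to \eqref{eq:sub_model_1}--\eqref{eq:sub_model_2} via the chain $\norm{\cdot}_1 \le \sqrt{\ell^\star}\norm{\cdot}_2$, the eigenvalue bound $\norm{(H_{c,S^\star}^T H_{c,S^\star})^{-1}}_2 \le 1/(n\nu)$, and H\"older, then invoke Lemma~\ref{lemma:tib_book_deterministic_result} and simplify \eqref{eq:blassonk:bound:1} using $\norm{M}_\infty \le \sqrt{\ell^\star}\norm{M}_2$. The only (immaterial) difference is that you bound the oracle-OLS term through the induced $\infty$-norm of $(H_{c,S^\star}^T H_{c,S^\star})^{-1}$, whereas the paper passes through the $\ell_2$-norm of $\hat{\beta}_n^\star - \beta^\star(f)$; both give the same constant $\kappa\sqrt{\ell^\star}\lambda/(2\nu)$.
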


\begin{proof}

	By \eqref{eq:emp_gram_ev_restricted}, the smallest eigenvalue of the $\ell^\star \times \ell^\star$ matrix $H_{c,S^\star}^T H_{c,S^\star}$ is positive, so that it is invertible and $H_{c,S^\star}$ has rank $\ell^\star$.
	
	We show that \eqref{eq:sub_model_1_new} implies \eqref{eq:sub_model_1}. For each $k \in \overline{S^\star}$, the vector $\hat{\theta}_n^{(k)}$ has length $\ell^\star$, so that 
	$$ 
	\norm{\hat \theta_{n}^{(k)}}_1
	\leq \sqrt {\ell^{\star}} \norm{\hat \theta_{n}^{(k)}}_2. 
	$$
	 
	Because $\hat\theta_{n}^{(k)} $ is an OLS estimate, using that the largest eigenvalue of $(H_{c,S^{\star}}^T H_{c,S^{\star}})^{-1}$ being bounded from above by $(n \nu)^{-1}$, we obtain
	$$
	\norm{\hat\theta_{n}^{(k)}  }_2
	= \norm{(H_{c,S^{\star}}^T H_{c,S^{\star}})^{-1} H_{c,S^{\star}}^T H_{c,k} }_2
	\le \frac{1}{n \nu} \norm{H_{c,S^{\star}}^T H_{c,k} }_2
	$$
	Since $\norm{x}_2 \le \sqrt{m} \, \norm{x}_\infty$ for $x \in \reals^m$, we can conclude that
	\begin{ceqn}
	\begin{equation*}
	\norm{\hat \theta_{n} ^{(k)}}_2 
	\leq \frac{\sqrt {\ell^\star}}{\nu n} 
	\max_{j\in S^{\star}} \abs{H_{c,j}^T H_{c,k}}. 
	\end{equation*}
	\end{ceqn}
	Combining the two bounds, we find that \eqref{eq:sub_model_1_new} indeed implies \eqref{eq:sub_model_1}. 
	
	Next we show that \eqref{eq:sub_model_2_new} implies \eqref{eq:sub_model_2}. For $k \in \overline{S^\star}$, we have
	\begin{ceqn}
	\begin{align*}
	&\abs{(H_{c,k} - H_{c,S^{\star}} \hat\theta_{n}^{(k)})^T  \epsilon_c^{(n)}}
	\\	
	&\leq \abs{H_{c,k}^T \epsilon_c^{(n)}} 
	+ \abs{(\hat\theta_{n}^{(k)})^T H_{c,S^{\star}}^T \epsilon_c^{(n)}}\\
	&\leq \abs{H_{c,k}^T \epsilon_c^{(n)}} 
	+ \norm{\hat \theta_{n}^{(k)}}_1
	\max_{j \in S^{\star}} \abs{H_{c,j}^T \epsilon_c^{(n)}}.
	\end{align*}
	\end{ceqn}
	Using \eqref{eq:sub_model_1} and \eqref{eq:sub_model_2_new} we deduce \eqref{eq:sub_model_2}. 
	
	The conditions of Lemma \ref{lemma:tib_book_deterministic_result} have been verified, and so its conclusion holds. We simplify the two terms in the upper bound~\eqref{eq:blassonk:bound:1}. First, we use that 
\begin{ceqn}
\begin{align*}
	\norm{\hat{\beta}_{n}^\star  - \beta^\star(f)}_2 	
	&= \norm{(H_{c,S^{\star}}^T H_{c,S^{\star}})^{-1} H_{c,S^{\star}}^T \epsilon_c^{(n)} }_2 \\
	&\le  
	\frac{\sqrt{\ell^\star}}{ \nu n } \norm{H_c^T \epsilon_c^{(n)}}_\infty.
	\end{align*}	
	\end{ceqn}
	 Second, for any matrix $A \in \reals^{p\times p}$, we have $\norm{A}_\infty \leq  \sqrt{p} \norm{A}_2$ (e.g., \cite[page 365]{horn+j:2012}), and this we apply to $(H_{c,S^{\star}}^T H_{c,S^{\star}})^{-1}$. In this way, the upper bound in \eqref{eq:blassonk:bound:1} is dominated by
	\begin{multline*}
	\norm{\hat{\beta}_ {n}^{\star} - \beta^\star(f)}_2 
	+ n\lambda \cdot \sqrt{\ell^\star}
	\norm{ (H_{c,S^{\star}}^T H_{c,S^{\star}})^{-1} }_2
	\\
	\le \frac{\sqrt{\ell^\star}}{n \nu} \max_{k \in S^\star} \abs{H_{c,k}^T \epsilon_c^{(n)}}
	+ n\lambda \cdot \sqrt{\ell^\star} \cdot \frac{1}{n \nu},
	\end{multline*}
	since the largest eigenvalue of $(H_{c,S^{\star}}^T H_{c,S^{\star}})^{-1}$ is at most $(n \nu)^{-1}$. Use \eqref{eq:sub_model_2_new} to further simplify the right-hand side, yielding \eqref{eq:blassonk:bound:2}.
\end{proof}

\emph{Step 2.} ---
Let $\delta \in (0, 1)$ and $n = 1, 2, \ldots$. In a similar way as in the proof of Theorem~\ref{th:ols}, we construct an event of probability at least $1-\delta$. This time, we need five inequalities to hold simultaneously.

\begin{itemize}[leftmargin=1pc]
	
	\item
	Because $\epsilon \in \subG( \tau^2)$, with probability at least $1-\delta/5$,
	\begin{ceqn}
	\begin{equation}
	\label{eq:0_eps_support}
	\left|\sum_{i=1}^n \epsilon (X_i) \right|
	\leq \sqrt{2n \tau^2 \log(10/\delta)}.
	\end{equation}
	\end{ceqn}
	
	\item
	In view of \citep[Lemma~2.2]{boucheron+l+m:2013} and Assumption~\ref{ass_2_bounded_control}, we have $h_k\in \subG(U_h^2)$ for all $k=1,\ldots, m$. Hence we can apply Lemma \ref{lemma:concentration} with $p=m$ to get that, with probability at least $1-\delta/5$, 
	\begin{ceqn}
	\begin{equation}
	\label{eq:1_h_support}
	\max_{k=1,\ldots, m} \left| \sum_{i=1}^n h_k(X_i)  \right| 
	\leq \sqrt{2n U_h^2\log(10m/\delta)}.
	\end{equation}
	\end{ceqn}
	\item
	By virtue of Assumptions~\ref{ass_1_sub_gauss} and~\ref{ass_2_bounded_control}, we can apply Lemma~\ref{lem:subG} to have $h_k \epsilon \in \subG(C U_h^2 \tau^2 )$, where $C = 9/2$. Hence we can apply Lemma~\ref{lemma:concentration} to get that, with probability at least $1-\delta/5$,
	\begin{ceqn}
	\begin{equation}
	\label{eq:2_heps_support}
	\max_{k=1,\ldots, m} 
	\left| \sum_{i=1}^n h_k(X_i) \epsilon(X_i)  \right| 
	\leq \sqrt{2C n \tau^2 U_h^2\log(10m/\delta))}.
	\end{equation}
	\end{ceqn}
	
\item
Recall that $B^\star 
	= \sup_{x \in \mathcal{X}} h_{S^\star}^T(x) G_{S^\star}^{-1} h_{S^\star}(x)$ with
	\begin{ceqn}
\begin{equation*}
\label{eq:Bstar:zeta}
	B^\star 	\le \lambda_{\max}(G_{S^\star}^{-1}) 
	\sup_{x \in \mathcal{X}} h_{S^\star}^T(x) h_{S^\star}(x)  
	\le \ell^\star U_h^2 / \gamma^{\star\star},
\end{equation*}	
\end{ceqn}
The assumption on $n$ easily implies that $n \ge 8 B^{\star} \log(5\ell^\star/\delta)$. Applying Lemma \ref{lemma:smallest_eigen_value} with $p=\ell^\star$, $g = h_{S^\star}$, and $\delta$ replaced by $\delta/5$, we find that, with probability at least $1-\delta/5$,
	\begin{ceqn}
	\begin{equation}
	\label{eq:3_ev_support}
	\norm{H_{S^{\star}} u}_2^2 \ge n\gamma^{\star\star} \norm{u}^2_2/2,
	\qquad \forall u\in \reals^{\ell^*}.
	\end{equation}
	\end{ceqn}
	
	\item
	Finally, {because $\abs{h_j(x)} \le U_h$ for all $x \in \mathcal{X}$ and $j \in \{1,\ldots,m\}$ and because $P(h_k h_j) = 0$ for all $(k, j) \in \overline{S^\star} \times S^\star$}, we have $h_kh_j \in \subG (U_h^4)$ for such $k$ and $j$, and thus, with probability at least $1-\delta/5$,
	\begin{ceqn}
	\begin{equation}
	\label{eq:4_control_cross_prod}
	\max_{k\in \overline{S^\star}}  \max_{j\in S^{\star}}   
	\left| \sum_{i=1}^n h_k(X_i) h_j(X_i)  \right| 
	\leq \sqrt{2nU_h^4 \log(10 \ell^\star m  /\delta ) }.
	\end{equation}
	\end{ceqn}
\end{itemize}

By the union bound, the event, say $E$, on which \eqref{eq:0_eps_support}, \eqref{eq:1_h_support}, \eqref{eq:2_heps_support}, \eqref{eq:3_ev_support} and \eqref{eq:4_control_cross_prod}  are satisfied simultaneously has probability at least $1-\delta$. We work on the event $E$ for the rest of the proof.
\smallskip

\emph{Step 3.} ---
On the event $E$, we have
\begin{ceqn}
\begin{equation}
\label{eq:application_lemma_3_support}
\forall u\in \reals^{\ell^\star}, \qquad \norm{H_{c,S^{\star}} u}_2^2
\ge n \alpha \gamma^{\star\star} \norm{u}_2^2,
\end{equation}
\end{ceqn}
where $\alpha \in (0, 1/2)$ is an absolute constant whose value will be fixed in Step~6(ii). We have
$$
H_{c,S^{\star}}^T H_{c,S^{\star}} 
= H_{S^\star}^T H_{S^\star} - n \, P_n(h_{S^\star}) \, P_n(h_{S^\star})^T 
$$
and thus, by the Cauchy--Schwarz inequality and by \eqref{eq:3_ev_support},
\begin{ceqn}
\begin{align*}
\norm{H_{c,S^{\star}} u}_2^2
&\ge
\norm{H_{S^\star} u}_2^2 - n \norm{P_n(h_{S^\star})}_2^2 \norm{u}_2^2 \\
&\ge
n \left(\gamma^{\star\star} / 2 - \norm{P_n(h_{S^\star})}_2^2\right) \norm{u}_2^2.
\end{align*}
\end{ceqn}
In view of \eqref{eq:1_h_support}, we have
$$
\norm{P_n(h_{S^\star})}_2^2
\le \frac{\ell^\star}{n^2} 2nU_{h}^2 \log(10m/\delta)
= 2\ell^\star \log(10m/\delta) U_{h}^2 / n.
$$
We thus get
$$
\norm{H_{c,S^{\star}} u}_2^2
\ge
n \gamma^{\star\star} 
\left[
\frac{1}{2} -
\frac{2\ell^\star \log(10m/\delta) U_{h}^2 / \gamma^{\star\star}}{n} 
\right] 
\norm{u}_2^2
$$
A sufficient condition for \eqref{eq:application_lemma_3_support} is thus that the term in square brackets is at least $\alpha$, i.e.,
$$
n \geq \frac{2}{1/2 - \alpha} 
\ell^\star \log(10m/\delta) U_{h}^2 / \gamma^{\star\star}
$$
Since $\ell^\star \ge 1$ and $U_{h}^2 \ge \gamma^{\star\star}$, a condition of the form
\begin{ceqn}
\begin{equation}
\label{eq:beta}
n \ge \rho \log(10\ell^\star m/\delta) [\ell^\star(U_h^2 / \gamma^{\star\star})]^2
\end{equation}
\end{ceqn}
is thus sufficient, with much to spare, provided $\rho > 2/(1/2-\alpha)$. In Step~6(ii), we will choose $\alpha$ in such a way that the constant $\rho = 70$ appearing in the statement of the theorem is sufficient.
\smallskip

\emph{Step 4.} ---
On the event $E$, we have
\begin{multline}
\label{eq:application_lemma_2}
\max_{k\in \overline{S^\star}}  \max_{j\in S^{\star}} |  H_{c,j} ^T H_{c,k}| \\
\le
\sqrt{2nU_h^4 \log(10 \ell^\star m  /\delta ) }
+ 2 U_h^2\log(10m/\delta).
\end{multline}
Indeed, denote $A = \overline{S^\star} \times S^\star$, in virtue of \eqref{eq:1_h_support} and \eqref{eq:4_control_cross_prod}, the left-hand side is bounded by
\begin{ceqn}
\begin{align*}
&\lefteqn{
	\max_{(k,j) \in A}  \left| \left(\sum_{i=1}^n h_k(X_i) h_j(X_i)\right)  - n P_n(h_k)  P_n(h_j)    \right|  
} \\
&\leq \max_{(k,j) \in A}   
\left| \sum_{i=1}^n h_k(X_i) h_j(X_i)  \right| 
+ \frac{1}{n} \max_{k\in \overline{S^\star}}
\left|\sum_{i=1}^n h_k (X_i) \right| 
\max_{j\in S^{\star}}  \left| \sum_{i=1}^n h_j(X_i)  \right|      \\
&\leq \max_{(k,j) \in A} 
\left| \sum_{i=1}^n h_k(X_i) h_j(X_i)  \right| 
+ \frac{1}{n} \max_{k = 1,\ldots m} 
\left|\sum_{i=1}^n h_k (X_i) \right|^2      \\
&\leq \sqrt{2nU_h^4 \log(10 \ell^\star m  /\delta ) }
+ \frac{1}{n} 2n U_h^2\log(10m/\delta),
\end{align*}
\end{ceqn}
which is \eqref{eq:application_lemma_2}.
\smallskip

\emph{Step 5.} ---
On the event $E$, we have
\begin{multline}
\label{eq:application_lemma_1}
\norm{H_c^T \epsilon_c^{(n)}}_\infty
\leq \\
\sqrt{2n C \tau^2 U_h^2\log(10m/\delta)} 
\left( 1 + \sqrt{2 \log(10/\delta) / (Cn)} \right).
\end{multline}
The proof is the same as the {first part of the} one  \eqref{eq:bound_cov_1}.
\smallskip

\emph{Step 6.} ---
We will verify that on the event $E$, the three assumptions of Lemma~\ref{lemma:tib_book_deterministic_result_new} are satisfied with $\kappa = 1/2$ and $\nu = \alpha \gamma^{\star\star}$, with $\alpha$ as in Step~3.

\begin{enumerate}[label=(\roman*)]
	\item 
	Eq.~\eqref{eq:emp_gram_ev_restricted} with $\nu = \alpha \gamma^{\star\star}$ is just \eqref{eq:application_lemma_3_support}.
	
	\item 
	Eq.~\eqref{eq:sub_model_1_new} with $\nu = \alpha \gamma^{\star\star}$ and $\kappa = 1/2$ follows from \eqref{eq:application_lemma_2} provided we have
	\begin{align*}
	\frac{\ell^\star}{\alpha \gamma^{\star\star} n}
	\left( \sqrt{2nU_h^4 \log(10 \ell^\star m  /\delta ) }
	+ 2 U_h^2\log(10m/\delta) \right)
	\\
	\le 1 - \frac{1}{2}.	
	\end{align*}
	To check whether this is satisfied, we will make use of the elementary inequality\footnote{The convex parabola $x \mapsto a x^2 - b x - c$ has zeroes at $x_{\pm} = (b \pm \sqrt{b^2 + 4ac}) / (2a)$, and $x_- < 0 < x_+ < \sqrt{b^2 + 4ac}/a$.} 
	\begin{ceqn}
	\begin{equation*}
	\forall (a,b,c) \in (0, \infty)^3, \, 
	\forall x \ge \sqrt{b^2 + 4ac}/a,
	\qquad
	a x^2 \ge bx + c.
	\end{equation*} 
	\end{ceqn}
	with $x = \sqrt{n}$ and 
	\begin{ceqn}
	\begin{align*}
	a &= \alpha \gamma^{\star\star} / (2\ell^\star), \\
	b &= \sqrt{2 U_h^4 \log(10 \ell^\star m / \delta)}, \\
	c &= 2U_h^2 \log(10m/\delta).
	\end{align*} 
	\end{ceqn}
	Sufficient is that $n = x^2$ is bounded from below by $(b^2 + 4ac)/a^2 = (b/a)^2 + 4c/a$, which is
	\begin{multline*}
	\frac%
	{2 U_h^4 \log(10 \ell^\star m / \delta)}%
	{(\alpha \gamma^{\star\star} / (2\ell^\star))^2} 
	+ 4 \frac%
	{2U_h^2 \log(10m/\delta)}%
	{\alpha \gamma^{\star\star} / (2\ell^\star)} = \\
	\frac{8}{\alpha^2} \log(10 \ell^\star m/\delta) \left(\frac{\ell^\star U_h^2}{ \gamma^{\star\star}}\right)^2
	+
	\frac{16}{\alpha} \log(10 m/\delta) \left(\frac{\ell^\star U_h^2}{ \gamma^{\star\star}}\right).
	\end{multline*}
	But $\ell^\star \ge 1$ and $\gamma^{\star\star} \le (1/\ell^\star) \sum_{j \in S^\star} P(h_j^2) \le U_h^2$, so that a sufficient condition is that
	$$
	n \ge \left(\frac{8}{\alpha^2} + \frac{16}{\alpha}\right)
	\log(10 \ell^\star m/\delta) [\ell^\star(U_h^2 / \gamma^{\star\star})]^2.
	$$
	The constant $\rho$ in \eqref{eq:beta} must thus be such that
	$$
	\rho \ge \max\left( \frac{2}{1/2 - \alpha}, \frac{8}{\alpha^2} + \frac{16}{\alpha} \right).
	$$
	The minimum of the right-hand side as a function of $\alpha \in (0, 1/2)$ occurs at $\alpha = \sqrt{2}/3$ and is equal to $2/(1/2 - \sqrt{2}/3) \approx 69.9$. Taking $\rho = 70$ as in the assumption on $n$ is thus sufficient. 
	
	\item
	Eq.~\eqref{eq:sub_model_2_new} with $\kappa = 1/2$ follows from \eqref{eq:application_lemma_1}, since
	$$
	\sqrt{n \tau^2 U_h^2\log(10m/\delta)} 
	\left( \sqrt{2C} + 2\sqrt{ \log(10/\delta) / n} \right) \le \lambda n /4
	$$
	by the assumed lower bound on $\lambda$. Indeed, since $\ell^\star \ge 1$ and $U_{h}^{2} \ge \gamma^{\star\star}$, the assumed lower bounds on $n$ imply that $n \ge 70 \log(10/\delta)$, so that $ 2\sqrt{ \log(10/\delta) / n}$ is bounded by $2/\sqrt{70}$; recall $C = 9/2$. Since $4 \cdot (3 + 2/\sqrt{70}) \approx 12.9$, the assumed lower bound for $\lambda$ suffices.
\end{enumerate}
\smallskip

\emph{Step 7.} ---
By the previous step, the conclusions of Lemma~\ref{lemma:tib_book_deterministic_result_new} with $\kappa = 1/2$ and $\nu = \alpha \gamma^{\star\star}$ hold on the event $E$, where $\alpha = \sqrt{2}/3$ was specified in Step~6(ii). The minimizer $\blasson$ in \eqref{eq:lasso} is thus unique and we have $\supp(\blasson(f)) \subset S^\star$.

To show the reverse inclusion, we need to verify that $\abs{\blassonk(f)} > 0$ for all $k \in S^\star$. To this end, we apply \eqref{eq:blassonk:bound:2} with $\kappa = 1/2$ and $\nu = \alpha \gamma^{\star\star}$, which becomes
$$
\max_{k \in S^\star} \abs{\blassonk(f) - \beta_k^\star(f)} 
\le (5/4) \sqrt{\ell^{\star}} \lambda / (\alpha \gamma^{\star\star}).
$$
For any $k\in S^\star$, we thus have
$$
\abs{\blassonk(f)}
\ge \min_{j \in S^{\star}} \abs{\beta_j^\star(f)}  
- (5/(4\alpha)) \sqrt{\ell^{\star}} \lambda / \gamma^{\star\star}.
$$
But for $\alpha = \sqrt{2}/3$, we have approximately $5/(4\alpha) \approx 2.65$. Since $\min_{j \in S^\star} \abs{\beta_j^\star (f)} > 3 \sqrt{\ell^{\star}} \lambda / \gamma^{\star\star}$ by the assumed upper bound for $\lambda$, we find $\abs{\blassonk(f)} > 0$, as required. \qed
\section{Proof of Theorem \ref{th:lslasso}}
\label{app:th:lslasso}

Recall that the LSLASSO estimator is defined as an OLS estimate computed on the active variables selected by the LASSO based on a subsample of size $N \in \{1,\ldots,n\}$. 
Let $\hat{\beta}_N^{\mathrm{lasso}}(f)$ denote the LASSO coefficient vector in \eqref{eq:lasso} based on the subsample $X_1, \ldots, X_N$ and let 
$$
	\hat{S}_N 
	= \supp(\hat{\beta}_N^{\mathrm{lasso}}(f)) 
	= \{ k \in \{1, \ldots, m\} \,:\, 
		\hat{\beta}_{N,k}^{\mathrm{lasso}}(f)>0 \} 
$$
denote the estimated active set of $\hat{\ell} = |\hat{S}_N|$ control variates. The LSLASSO estimate $\hat{\alpha}_n^{\mathrm{lslasso}}(f)$ based on the full sample $X_1, \ldots, X_n$ is defined as the OLS estimator based on the control variates $h_k$ for $k \in \hat{S}_N$: writing $H_{\hat{S}_N}$ for the $n \times \hat{\ell}$ matrix with columns $(h_k(X_i))_{i=1}^n$ with $k \in \hat{S}_N$, we have
\begin{ceqn}
\begin{equation*}
\bigl(
\hat \alpha_n^{\mathrm{lslasso}}(f), 
\hat \beta_n^{\mathrm{lslasso}}(f)
\bigr)
\in \argmin_{(\alpha , \beta) \in \reals \times \reals^{\hat \ell}} 
\norm{f^{(n)}-\alpha \1_n-H_{\hat{S}_N} \beta}_{2}^{2},
\end{equation*}
\end{ceqn}
Therefore, we can derive a concentration inequality by combining the support recovery property (Theorem \ref{th:lasso_support_recovery}) along with the concentration inequality for the OLS estimate (Theorem \ref{th:ols}) using only the active control variates. 

Let $\delta >0$ and $n \ge 1$. We construct an event with probability at least $1-\delta$ on which the support recovery property and the concentration inequality for the OLS estimate hold simultaneously. Recall that $S^\star = \supp(\beta^\star(f))$ is the true set of $\ell^\star = \abs{S^\star}$ active control variables.
\begin{itemize}[leftmargin=1pc]
	\item Thanks to Theorem~\ref{th:lasso_support_recovery}, with probability at least $1-\delta/2$,
	\begin{ceqn}
	\begin{equation}
	\label{eq:support_equal}
	\hat S_N = S^\star.
	\end{equation}
	\end{ceqn}
	{Indeed, the conditions on $N$ and $\lambda$ in Theorem~\ref{th:lslasso} are such that we can apply Theorem~\ref{th:lasso_support_recovery} with $n$ and $\delta$ replaced by $N$ and $\delta/2$, respectively.}
	
	\item
	Thanks to Theorem~\ref{th:ols}, with probability at least $1-\delta/2$,
	\begin{multline}
	\label{eq:ols_restricted}
	\abs{\aols(f,h_{S^\star}) - P(f)}
	\le \sqrt{2 \log(16/\delta)} \frac{\tau}{\sqrt{n}} \\
	+ 58 \sqrt{B^\star \ell^\star \log(16 \ell^\star /\delta) \log(8/\delta)}\frac{\tau}{n}.
	\end{multline}
	where for any $S\subset \{1,\ldots, m\}$, $ \aols(f,h_{S})$ is the OLS estimate of $P(f)$ based on the control variates $h_{S}$. Indeed, we apply Theorem~\ref{th:ols} with $h$ and $\delta$ replaced by $h_{S^\star}$ and $\delta/2$, respectively. The required lower bound on $n$ is now $n \geq \max\left(18 B^\star \log(8 \ell^\star/\delta),75 \ell^\star \log(8/\delta) \right)$. By assumption we have $N \geq 75 [\ell^\star(U_h^2/\gamma^{\star\star})]^2 \log(20 \ell^\star/\delta)$. The required lower bound is already satisfied for $N$, and thus certainly by $n$.
\end{itemize}

By the union bound, the event on which \eqref{eq:support_equal} and \eqref{eq:ols_restricted} are satisfied simultaneously has probability at least $1-\delta$. 
On this event, we can, by definition of $\hat{\alpha}_n^\mathrm{lslasso}(f)$ and by \eqref{eq:support_equal}, write the integration error as
\begin{ceqn}
\begin{align*}
\abs{\hat \alpha_n^\mathrm{lslasso}(f)- P(f)}
&= \abs{\aols(f,h_{\hat{S}_N}) - P(f)} \\
&= \abs{\aols(f,h_{S^\star}) - P(f)}.
\end{align*}
\end{ceqn}
But the right-hand side is bounded by \eqref{eq:ols_restricted}, yielding \eqref{ineq:alslasso}, as required.

\bibliographystyle{spbasic}
\bibliography{main.bbl}
\end{document}